\newtheorem{theorem}{Theorem}
\newtheorem{corollary}{Corollary}
\newtheorem{lemma}{Lemma}
\newtheorem{definition}{Definition}
\newtheorem{proposition}{Proposition}
\theoremstyle{remark}
\newtheorem{remark}{Remark}
\newtheorem{example}{Example}
\DeclareMathOperator{\id}{\it id}
\DeclareMathOperator{\pr}{pr}
\DeclareMathOperator{\spn}{span}
\DeclareMathOperator{\sgn}{sgn }
\DeclareMathOperator{\Arg}{Arg}
\DeclareMathOperator{\ad}{ad}
\DeclareMathOperator{\Ad}{Ad}
\DeclareMathOperator{\SU}{SU}
\DeclareMathOperator{\wSU}{ \widetilde{SU}}
\DeclareMathOperator{\Diff}{Diff}
\DeclareMathOperator{\Vect}{Vect}
\DeclareMathOperator{\su}{\mathfrak{su}}
\DeclareMathOperator{\Lieg}{\mathfrak{g}}
\DeclareMathOperator{\Lieh}{\mathfrak{h}}
\DeclareMathOperator{\Liep}{\mathfrak{p}}
\DeclareMathOperator{\Liek}{\mathfrak{k}}
\DeclareMathOperator{\comp}{{\mathbb{C}}}
\DeclareMathOperator{\integer}{{\mathbb{Z}}}
\DeclareMathOperator{\real}{\mathbb{R}}
\DeclareMathOperator{\unitD}{\mathbb{D}}
\DeclareMathOperator{\calF}{\mathcal{F}}
\DeclareMathOperator{\calG}{\mathcal{G}}
\DeclareMathOperator{\calH}{\mathcal{H}}
\DeclareMathOperator{\calA}{\mathcal{A}}
\DeclareMathOperator{\calE}{\mathcal{E}}
\DeclareMathOperator{\calV}{\mathcal{V}}
\DeclareMathOperator{\calJ}{\mathcal{J}}
\DeclareMathOperator{\scrC}{\mathscr{C}}
\DeclareMathOperator{\scrS}{\mathscr{S}}
\DeclareMathOperator{\scrT}{\mathscr{T}}
\DeclareMathOperator{\Ann}{Ann}
\DeclareMathOperator{\Var}{Var}
\DeclareMathOperator{\End}{end}
\DeclareMathOperator{\abscon}{AC}
\DeclareMathOperator{\Lie}{Lie}
\DeclareMathOperator{\Rot}{Rot}
\DeclareMathOperator{\wDiff}{\widetilde{Diff}}
\DeclareMathOperator{\llangle}{\langle \! \langle}
\DeclareMathOperator{\rrangle}{\rangle \! \rangle}
\title[Sub-Riemannian geometry...]{Sub-Riemannian geometry on infinite-dimensional manifolds}
\author[E. Grong, I. Markina, A. Vasil'ev]{Erlend Grong, Irina Markina, and Alexander Vasil'ev}
\address{ 
\noindent \newline Department of Mathematics\newline
University of Bergen \newline P.O.~Box~7803 \newline Bergen N-5020, Norway \bigskip}
\email{\newline erlend.grong@math.uib.no \newline irina.markina@math.uib.no \newline alexander.vasiliev@math.uib.no}
\thanks{The authors have been  supported by the grant of the Norwegian Research Council \#204726/V30, by the NordForsk network `Analysis and Applications', grant \#080151, and by the European Science Foundation Research Networking Programme HCAA. This work was completed while the authors were visiting Mittag-Leffler institute, Sweden in the Fall 2011.}
\subjclass[2010]{Primary 37K05, 58B25, 53D30; Secondary 30C35, 70H06}
\keywords{Sub-Riemannian geometry, semi-rigid curves, controllability,  Lie-Fr\'echet group, group of diffeomorphisms of circle, geodesic, K\"ahlerian metric}
\begin{document}

\begin{abstract}
We generalize the concept of sub-Riemannian geometry to infinite-dimensional manifolds modeled on convenient vector spaces. On a sub-Riemannian manifold $M$, the metric is  defined only on a sub-bundle $\calH$ of the tangent bundle $TM$, called the horizontal distribution. Similarly to the finite-dimensional case, we are able to split possible candidates for minimizing curves into two categories: semi-rigid curves that  depend only on $\calH$, and normal geodesics that depend  both on $\calH$ itself  and on the metric on $\calH$. In this sense, semi-rigid curves in the infinite-dimensional case generalize the notion of singular curves for finite dimensions. In particular, we study  the case of regular Lie groups. As examples, we consider  the group of sense-preserving diffeomorphisms $\Diff S^1$ of the unit circle and the Virasoro-Bott group with their respective horizontal distributions chosen to be the Ehresmann connections with respect to a projection to the space of normalized univalent functions. In these cases we prove controllability and find formulas for the normal geodesics with respect to the pullback of the invariant K\"ahlerian metric on the class of normalized univalent functions. The geodesic equations are analogues to the Camassa-Holm, Huter-Saxton, KdV, and other known non-linear PDE.
\end{abstract}

\maketitle

\section{Introduction}

The main goal of the paper is to study the geometry of infinite-dimensional manifolds with non-holonomic constraints, which is a generalization of sub-Riemannian geometry in the finite-dimensional case. A sub-Riemannian manifold is a triple $(M, \calH, \mathbf h)$, such that $M$ is a connected smooth manifold, $\calH$ is a smooth sub-bundle of $TM$, and $\mathbf h$ is a Riemannian metric on $\calH$. The co-dimension of $\calH$ is assumed to be positive, otherwise we consider a standard Riemannian manifold.

Sub-Riemannian geometry on finite-dimensional manifolds is well studied, and has been proved to have important applications in many areas ranging from optimal control theory \cite{Brockett} and sub-elliptic operators \cite{ABGR,BB} to mathematical physics \cite{Giannoni}.  Typical general references are~\cite{AS, Mon, Str1, Str2}. Unlike the standard Riemannian geometry on  $M$, the metric is  defined only on a sub-bundle $\calH$ of the tangent bundle $TM$. The distance between two fixed points is measured in terms of the length of the curves connecting them and passing tangentially to $\calH$ at any point. Such curves are called {\it horizontal}. The distance is finite if every pair of points can be connected by at least one horizontal curve and is achieved on the curves  of  minimal length.
The standard way to ensure that any pair of points can be connected by a horizontal curve, is to require that $\calH$ is {\it bracket generating}. Connectivity by horizontal curves then follows from the Rashevski{\u\i}-Chow Theorem \cite{Chow, Rashevsky}. The necessary condition for  minimizing curves is given,~e.g.,  by the Pontryagin Maximum Principle \cite{AS}. This condition implies that the optimal curves are of two types: normal geodesics that behave similarly to the standard Riemannian geodesics, and singular curves that depend only  on the distribution $\calH$ itself and not on the metric on $\calH$.

We are going to generalize as much as possible of the above construction  to infinite-dimensional manifolds, where  constraints are reflected in the sub-bundle $\calH$. Unfortunately, we loose both the Rashevski{\u\i}-Chow Theorem and the Pontryagin Maximum Principle along the way, however, we still have the tools of variational calculus developed for manifolds modeled on convenient vector spaces at our disposal. Using them, we are aimed at developing an analogue of sub-Riemannian geometry in the infinite-dimensional setting.

The outline and main results of the paper are as follows. After giving motivation in the following section, we introduce basic definitions of sub-Riemannian geometry
on infinite-dimensional manifolds with splitting sub-bundles in Section~3. The notion of semi-rigid curves is introduced. Semi-rigid curves play the role analogous to singular 
curves in finite-dimensional sub-Riemannian geometry. The normal geodesics are shown to satisfy the Euler equation. In Section~4 we are focused on an important
particular case of infinite-dimensional manifolds, the regular Lie groups.
Section~5 contains applications
of the results for the concrete case of the group of orientation preserving diffeomorphisms of the unit circle $\Diff S^1$ and its central extension known as the  Virasoro-Bott group. There we construct a metric that allows us to apply the theorems about geodesics proved in previous sections and analyze the formulas of geodesics. Applying Sobolev metrics and a metric related to the K\"ahlerian structure on the space of normalized univalent functions, it turns out that the Euler equations for the geodesics recover analogues to the Burgers, KdV, Camassa-Holm, and Hunter-Saxton equations.

\section{Motivation}\label{motivation}

First, let us agree on some basic conventions and definitions.
For simplicity, most of the curves in our paper are parametrized on the unit interval $I =[0,1]$. All partial differential operators are also shortened writing $\partial_x:=\tfrac{\partial}{\partial x}$. Partial derivatives with respect to $t$ for a curve $
\gamma(t)$, $t\in I$, are usually denoted by  dot, $\partial_t \gamma = \dot \gamma.$

For a map between two manifolds $f:M \to B$, the tangent map, or the differential of this map, is written as $df:TM \to TB.$ If $\alpha$ is a form on $M$, and $v \in T_mM$, then we will write $\alpha(m)(v)$ as simply $\alpha(v)$, whenever it is clear from the context which tangent space the vector $v$ belongs to. Metrics are denoted by boldface letters, e.g., $\mathbf g, \mathbf h$.

We will work with manifolds modeled on {\it $c^{\infty}$-open subsets of convenient vector spaces} following the terminology  found in \cite{KrieglMichor}. A convenient vector space is a locally convex vector space, where the most general notion of smoothness, based on the notion of smooth curves, is introduced and the vector space satisfies the respective completeness condition. For a short introduction, we refer the reader to \cite{Michor} or \cite{KMGroup}. Observe, that when we say  `tangent bundle', we  always refer to a kinematic tangent bundle, where the kinematic vector at a point is the velocity vector of smooth curves passing through this point. A kinematic vector field is a smooth section of the kinematic tangent bundle.
Observe that Frech\'et spaces are convenient vector spaces, and smoothness in this case coincides with $C^\infty$ smoothness with respect to the G\^ateaux derivative.
All smooth functions between manifolds $M$ and $B$ are denoted by $C^\infty(M,B)$, and if $B = \real$, we will simply write $C^\infty(M)$. All finite-dimensional manifolds will be Hausdorff and 2-nd countable and infinite-dimensional dimensional manifolds will be smoothly  Hausdorff.

Now, let us give two examples as a motivation for formulating and studying   sub-Riemannian infinite-dimensional manifolds.

\subsection{Riemannian submersions} \label{sec:RiemannianSubmersions}

Let $M$ and $B$ be possibly infinite-dimensional manifolds modeled on convenient vector spaces, and let $\pi\colon M \to B$ be a surjective map, such that the restriction of $d\pi$ to each tangent space is surjective. Such a map is called {\it submersion}. Assume that the kernel $\ker d\pi$ is a vector bundle and that there is another vector bundle $\calH$ on $M$, such that $TM$ is the Witney sum
$$TM = \ker d\pi \oplus \calH.$$
The sub-bundle $\calH$ is called  an {\it Ehresmann connection} of $\pi$. Furnish $B$ and $M$ with Riemannian metrics $\mathbf{b}$ and $\mathbf g$ respectively, such that $\ker d\pi$ and $\calH$ become orthogonal with respect to ${\bf g}$, and moreover,
\begin{equation} \label{eqRSHorizontal} 
\mathbf g(v_1, v_2) = \mathbf b( d_m\pi v_1, d_m\pi v_2), \qquad v_1, v_2 \in \calH_m.
\end{equation}
Then, the map $\pi\colon (M,\mathbf g) \to (B, \mathbf b)$ is called a {\it Riemannian submersion}. In this case, the Riemannian geodesics on $B$ are exactly the projections of the Riemannian geodesics on $M$, which are horizontal with respect to $\calH$ at one (and hence any) point~\cite{MM}. We use the term {\it Riemannian geodesic} for a curve $\gamma\colon [0,1]\to M$, which is a critical value of the energy functional $E(\gamma) = \frac{1}{2} \int_0^1 \mathbf g( \dot \gamma(t), \dot \gamma(t)) \, dt.$

Given a metric $\mathbf g$ on $M$, we can construct a Riemannian submersion in the following way. Define $\calH = (\ker d\pi)^\perp$ and assume that $\calH\oplus\ker d\pi=TM$. Then $\calH$ is an Ehresmann connection for the submersion $\pi\colon M\to B$. For any vector field $X$ on $B$, define $hX$ as a unique horizontal lift of $X$ to $M$, i.e.,  a unique vector field $hX$ with values in $\calH$ satisfying $d_m \pi (hX(m)) = X(\pi(m))$ for any $m \in M$. Then we can define the metric $\mathbf b$ on $B$ by
$$\mathbf b(X(b), Y(b)) = \mathbf g(hX(m), hY(m)), \qquad m \in \pi^{-1}(b).$$
Notice that a  submersion $\pi\colon M \to B$ with a metric $\mathbf g$ on $M$ can be considered as a Riemannian submersion, if and only if, $\mathbf g(hX(m), hY(m))$ does not depend on the choice of the element $m \in \pi^{-1}(b)$.

We can also construct a Riemannian submersion  starting with a Riemannian metric $\bf b$ on $B$. Choose a metric $\mathbf v$ on $\ker d\pi$ and a sub-bundle $\calH$ transversal  to $\ker d\pi$. Then the metric $\mathbf g$ can be defined by the relation
\begin{equation} \label{eqliftmetric} \mathbf g(v_1, v_2) = \mathbf b( d\pi v_1, d\pi v_2) + \mathbf v(\pr v_1, \pr v_2),\end{equation}
where $\pr\colon TM \to \ker d\pi$ is a projection satisfying $\ker \pr = \calH$.

If $B$ is a complex object and $M$ is a simpler one, then the Riemannian submersion $\pi\colon M \to B$ gives us a way to study the Riemannian geometry on a simpler object $M$ instead of $B$. Examples of results obtained using this technique in the study of the space of shapes can be found, e.g., in~\cite{Ellis, MM, MMVanish}.  

One can also define a metric only on $\calH$ and study the sub-Riemannian geometry on~$M$. Heuristically, it can be given by considering the metric space $(M, \mathbf{b})$ as a limiting case as $\varepsilon \to \infty$ of the punished metric $\mathbf g = \mathbf b \circ d\pi + \varepsilon (\mathbf v \circ \pr)$, where the expression is written in the sense of~\eqref{eqliftmetric}. In finite dimensions, this limit is realized in terms of Gromov-Hausdorff convergence of metric spaces, see e.g. \cite{Gromov}.

\subsection{Space of shapes and conformal welding} 

Let us consider a family of smooth two-dimensional shapes evolving in time. By {\it shape} we understand a simple closed smooth curve in the complex plane dividing it into two simply connected domains. The study of two-dimensional shapes is one of the central problems in the field of applied sciences. A program of such study and its importance was summarized by Mumford at ICM 2002 in Beijing \cite{Mumford}.
Let us consider a time-dependent family of shapes enclosing  bounded domains $\Omega(t)$ in $\comp$ representing a shape evolution  in the complex plane. Assume that all domains contain the origin  $0 \in \comp$. Typically, the study of the geometry of shapes resides in the study of analytic properties of a family of conformal embeddings $f(z,t)$ of the unit disk  $\unitD=\unitD_+$ into $\mathbb C$ such that $f(z,t)$ is a unique Riemann map of $\unitD$ onto $\Omega(t)$, that satisfies $f(0,t) = 0$, and $\partial_z f(0,t) >0$ for every $t\in [0, 1]$. We assume that $\partial\Omega(t)$ is $C^{\infty}$-smooth so  $f$ is smooth in $z$ up to $\partial \mathbb D_+$, and we assume also that $f(z,t)$ is smooth in  $t\in [0,1]$. Then for every such $f$ there is a matching function $g$ such that $g(z,t)$ maps the exterior $\unitD_{-}$ of the unit disk $\mathbb D_+$  onto the exterior of the domain $\Omega(t)$ and satisfies $g(\infty,t) = \infty.$ The superposition $f^{-1} ( g(e^{i\theta}, t ), t)$ is called a {\it conformal welding} for each fixed $t$.
We relate the motion of $f$ in time to the motion of $g$ by requiring
$$\frac{1}{2\pi} \int_0^{2\pi} \frac{\partial_t f^{-1} ( g(e^{i\theta}, t ), t)}{\partial_\theta f^{-1} ( g(e^{i\theta}, t ), t)} d\theta = 0.$$
See details in~\cite{GrGumVas}.
Here $f^{-1}(z,t)$ is the inverse function of $f(z,t)$ in $z$. We want to study the motion of $f(z,t)$ and $g(z,t)$ minimizing some energy that  depends only on the shape of the boundary $\partial \Omega(t)$. In Section \ref{sec:DiffVir} we will formulate this problem  as finding minimal horizontal curves with respect to a given distribution on the Virasoro-Bott group.

\section{Infinite-dimensional manifolds with constraints} \label{sec:MainSec}

\subsection{Sub-Riemannian geometry and geodesics in finite dimensions} \label{sec:sRfinite}

We will start by looking at the definition and basic properties of sub-Riemannian manifolds in finite dimensions.
Recall that a sub-Riemannian manifold, is a triple $(M, \calH, \mathbf h)$, such that $M$ is an $n$-dimensional connected smooth manifold, $\calH$ is a smooth sub-bundle of $TM$, and $\mathbf h$ is a Riemannian metric on $\calH$.
Often the smooth sub-bundle $\calH$ is considered as a smooth distribution which assigns to each point $m$ a linear subspace $\calH_m\subset T_mM$. We call 
$\calH$ a {\it horizontal distribution}. The pair $(\calH, \mathbf h)$ is called {\it a sub-Riemannian structure} on $M$.

 \begin{definition}\label{horcurve} 
An absolutely continuous curve $\gamma\colon I\to M$ is called $\calH$-horizontal, or simply horizontal if $\dot \gamma(t) \in \calH_{\gamma(t)}$ for almost all  $t\in I$.
 \end{definition}

For a pair of points $m_0, m_1\in M,$ let $\abscon_{\calH}(m_0,m_1)$ denote the collection of all horizontal absolutely continuous curves $\gamma\colon [0,1] \to M$ with square integrable derivatives that satisfy the boundary condition $\gamma(0) = m_0$ and $\gamma(1) = m_1$. Here, square integrability is defined with respect to the metric $\mathbf h$, however, any other choice of a metric on $\calH$ gives the same set of curves. Hence, the definition of $\abscon_{\calH}(m_0,m_1)$  depends only on $\calH$. The associated distance on $M$ corresponding to the sub-Riemannian structure $(\calH, \mathbf h)$ is given by
$$
d_{C-C}(m_0, m_1) = \inf \left\{ \int_0^1 \{{\mathbf h}(\dot \gamma(t), \dot \gamma(t))\}^{1/2} \, dt \, \colon \, \gamma \in \abscon_{\calH}(m_0,m_1) \right\}
$$
and is called the {\it Carnot-Carath\'eodoty distance}. 
The pair $(M, d_{C-C})$ forms a metric space, if and only if,  the distance $d_{C-C}$ is finite, or in other words, $\abscon_{\calH}(m_0,m_1)$ is non-empty for every pair of points $m_0,m_1 \in M$. Unlike usual Riemannian geometry, the map $m \mapsto d_{C-C}(m, m_1)$ is not smooth in general, and the Hausdorff dimension of the metric space $(M, d_{C-C})$ can be greater than the manifold topological dimension $n$.

The typical way to ensure that $\abscon_{\calH}(m_0,m_1)$ is nonempty for any pair $m_0,m_1\in M$, is to require that $\calH$ is {\it bracket generating}. To define this notion we denote by $\Gamma(\calH)$ sections of~$\calH$. Take $\calH^1 =\Gamma(\calH)$, and for any positive integer $k$, define
$$
\calH^{k+1} = \calH^{k} + [\calH, \calH^k].
$$
The collection of all obtained vector fields as $k\to\infty$ we denote by $\Lie \calH$.
Let $\Lie_{m} \calH$ be a subspace of $T_{m}M$ obtained by evaluating all the elements from $\Lie \calH$ at $m$. The distribution $\calH$ is called bracket generating if $\Lie_{m} \calH = T_{m}M$ for every $m \in M$. If $\calH$ is bracket generating, then  the Rashevski{\u\i}-Chow Theorem \cite{Chow, Rashevsky} guarantees that any two points can be connected by a horizontal curve. The metric topology induced by the Carnot-Carath\'eodory distance coincides with the manifold topology when $\calH$ is bracket generating. 

An important tool of defining the curves of minimal length is provided by  the Pontryagin Maximum Principle \cite{AS} that yields  the existence of two types of possible length minimizers, which are not mutually exclusive. 
The curves from the first type  minimizers are called {\it normal}. They are projections of solutions to a Hamiltonian system with a sub-Riemannian Hamiltonian function to the manifold. Locally, the sub-Riemannian Hamiltonian function is given by
\begin{equation} \label{eq:sRHamiltonian} 
H_{sR} (p)= \frac{1}{2} \sum_{j=1}^k h_{X_j}^2(p), \qquad h_{X_j}(p) := p(X_j(m)), \qquad p \in T^*_mM,
\end{equation}
where $(X_1, \dots, X_k)$ is a local orthonormal basis of vector fields from $\calH.$ A normal minimizer is always $C^\infty$-smooth and also  is local minimizer.

The other type of local minimizers consists of so-called {\it singular curves}, which can intuitively be thought of as  `bad points' of $\abscon_{\calH}(m_0,m_1)$. Namely, let $\abscon_{\calH}(m_0)$ be the collection of all horizontal absolutely continuous curves $\gamma\colon I \to M$, which are square integrable and satisfy only one-side boundary condition $\gamma(0) = m_0$. This is a Hilbert manifold modeled on $L^2(I, \real^k)$, where $k$ is the rank of $\calH$ \cite{Montsingular,Mon}. $\abscon_{\calH}(m_0, m_1)$ can then be identified with the preimage $(\End_{m_0})^{-1}(m_1)$ of the mapping
$$\begin{array}{rccc} \End_{m_0}: & \abscon_{\calH}(m_0) & \to & M \\
& \gamma & \mapsto & \gamma(1) \end{array}.$$
Hence, if $\gamma$ is a regular point of $\text{end}_{m_0}$, then   the space $\abscon_{\calH}(m_0,m_1)$ has the structure of a Hilbert manifold of codimension $n$ locally about $\gamma$  by the implicit function theorem.

\begin{definition}
An absolutely continuous horizontal curve $\gamma$ with $\gamma(0) = m_0$ is called singular, if it is a singular point of the mapping $\End_{m_0}$.
\end{definition}
The definition of singular curves  depends only on the sub-bundle $\calH$, and not on the metric $\mathbf h$. Singular curves are not always local minimizers, but all minimizers that are not normal, are singular curves. The term {\it abnormal} is also used for singular curves. It is still an open question whether all singular curves, which are minimizers at the same time, are smooth. Some results in this direction, and on singular curves in general, can be found, e.g.,~in~\cite{AgrachevSarychev,BonnardT,BH,ChtourJT,GoleK,Montsingular,Mon}.

\begin{remark} 
Some authors prefer to use Lipschitz curves instead of square integrable curves. The collection of curves starting at a fixed point $m_0$ then becomes a Banach manifold modeled on $L^\infty(I, \real^k).$ 
\end{remark}

\subsection{Sub-Riemannian  infinite-dimensional manifolds}
In order to generalize the definition of a finite-dimensional sub-Riemannian manifold to infinite dimensions, we need  an extra requirement.

\begin{definition}\label{def:sR}
A sub-Riemannian manifold is a triple $(M, \calH, \mathbf h)$, where
\begin{itemize}
\item $M$ is a connected manifold modeled on $c^{\infty}$-open sets of a convienient vector space;
\item $\calH$ is a splitting sub-bundle of $TM$, i.e.,  there exists another sub-bundle $\calV$, such that
\begin{equation} \label{eq:HorVert} 
TM = \calH \oplus \calV; 
\end{equation}
\item $\mathbf h$ is a weak metric on $\calH$.
\end{itemize}
\end{definition}
Here `week' means that the mapping $v \in \calH_m \mapsto \mathbf h(v, \cdot) \in \calH_m^*$ is injective but not necessarily surjective.
The requirement of the splitting condition is non-trivial if $M$ is not modeled on a Hilbert space, see~\cite{LT}. In particular, it implies that there exists a smooth projection from $TM$ to $\calH$. All extra requirements in Definition~\ref{def:sR} are always satisfied in the finite-dimensional case.

We restrict ourselves to considering only smooth curves not only in a way of simplification, but also because enlarging the space of curves does not guarantee a nicer topology of this space.   Hence, we will use the term {\it horizontal curve}  meaning a smooth curve $\gamma\colon I \to M$, such that $\dot \gamma \in \calH_{\gamma(t)}$ for every $t\in I$. Denote the collection of all such curves by $C^\infty_{\calH}(I, M)$.

Now, let us make use of calculus of variations. We say that a smooth map $\Phi\colon I \times (-\epsilon, \epsilon) \to M$ is a {\it variation} of a curve $\gamma\in C^\infty(I,M)$ if 
\begin{equation}\label{def:genvar}
\Phi(t,0) = \gamma(t), \quad \Phi(0,s) = \gamma(0), \quad \text{ and } \quad \Phi(1,s) = \gamma(1).
\end{equation} 
For a fixed $s$, let us denote by $\gamma^s$ the curve $t \mapsto \Phi(t,s)$. The map $s \mapsto \gamma^s$ can be seen as a curve in $C^\infty(I,M)$. By slight abuse of notations, we will denote the variation simply by $\gamma^s$. We say that a variation is $\calH${\it-horizontal},  if for each $s\in (-\epsilon, \epsilon)$, the curve $\gamma^s(t)$, $t\in I$, is $\calH$-horizontal. Denote by  $\calJ_{\calH}(\gamma)$  the collection of all $\calH$-horizontal variations of $\gamma$.

Observe that the problem of length minimization is equivalent to the problem of energy minimization, which allows us to formulate the first-order condition for a length minimizer as follows.

\begin{definition}
Let us define the sub-Riemannian energy functional on $C^\infty_{\calH}(I,M)$ by $E(\gamma) = \frac{1}{2} \int_0^1\mathbf h(\dot \gamma, \dot \gamma) \, dt.$ An $\calH$-horizontal curve $\gamma$ is  called a sub-Riemannian geodesic if
$$\partial_s E(\gamma^s)\big|_{s=0} = 0, \quad \text{ for any }\quad \gamma^s \in \calJ_{\calH}(\gamma).$$
\end{definition}

It is difficult to  compute such curves explicitly  in a most general setting without additional assumptions even in the Riemannian case $\calH = TM$. Therefore, we want to study some particular cases where the solutions exist in the Riemannian case, and see then, whether it helps to find formulas for the sub-Riemannain geodesics. This usually means that we must choose a way to extend the metric $\mathbf h$ to the entire tangent bundle.

\begin{definition}
Let $(M,\calH, \mathbf h)$ be a sub-Riemannian manifold. A Riemannian metric $\mathbf g$ on $M$ is said to tame $\mathbf h$ if $\mathbf g|_{\calH} = \mathbf h$, and the orthogonal complement $\calH^\perp$ to $\calH$ with respect to $\mathbf g$ is a sub-bundle satisfying $\calH \oplus \calH^\perp= TM$.
\end{definition}

Let $\calV$ be a vector bundle such that $\calH \oplus \calV = TM$. Assume that there exists a metric $\mathbf v$ on $\calV$, and define the metric $\mathbf g = \mathbf h \oplus \mathbf v$, i.e.,  $\calH$ and $\calV$ become orthogonal with respect to $\mathbf g$ and $\mathbf g|_{\calH} = \mathbf h$,  $\mathbf g |_{\calV} = \mathbf v$. We conclude that a Riemannian metric $\mathbf g$ which tames a sub-Riemannian metric $\mathbf h$ exists if and only if the horizontal sub-bundle $\calH$ has a complement sub-bundle that admits a metric.

\begin{remark}
In contrast to finite-dimensional Riemannian geometry, a distance given by a (weak) Riemannian metric may vanish between some distinct points. See~\cite[section 3.10]{MM},~\cite{MMVanish}, for examples. This implies that if we define the Carnot-Carath\'eodory distance by
$$d_{C-C}(m_0,m_1) = \inf\left\{ \int_0^1 \{\mathbf h(\dot \gamma, \dot \gamma)\}^{1/2} \,dt \, : \, \gamma\in C^{\infty}_{\calH}([0,1],M),\  \gamma(0) =m_0,\ \gamma(1) = m_1\right\},
$$
it is possible that it may vanish for some points as well being a generalization of the Riemannian distance.
\end{remark}

\begin{remark}
Although the variational approach has been used in sub-Riemannian geometry in finite dimensions, see, e.g., \cite{Hamenstadt}, usually the Hamiltonian viewpoint  is preferred, as it does not require a choice of the Riemannian metric to tame $\mathbf h$. The reason is that whereas there is no canonical choice of the complement to $\calH$ in $TM$, the sub-bundle $\Ann(\calH) = \{p \in T^*_mM \, : \,  p(v) = 0 \text{ for any } v \in \calH_m\ m \in M\}$ is canonical. Having only a weak metric, we try to avoid  cotangent bundles because we can not associate elements in $T^*_mM$ to $T_mM$ any longer by using a metric.
\end{remark}

We are aimed at computing sub-Riemannian geodesics with respect  to a metric $\mathbf h$, provided a sufficiently nice Riemannian metric $\mathbf g$ that tames $\mathbf h$. However, we need a new definition to describe horizontal curves, which can be geodesics but which do not depend on the metric $\mathbf h$, and depend only on the horizontal sub-bundle itself. They are, in some sense, counterparts of singular curves in finite dimensions.

\subsection{Semi-ridig curves for  infinite-dimensional sub-Riemannian manifolds} \label{sec:SemiRigid}

The definition for singular curves can not be extended to general  infinite-dimensional  manifolds modeled on convenient vector spaces. Therefore, we propose a way to determine curves, which depend only on the distribution $\calH$. Let $\Vect(\gamma) := \Gamma(\gamma^*(TM))$ denote the space of smooth vector fields along $\gamma$. Put $m_0 = \gamma(0)$, $m_1 = \gamma(1)$, and use $C^\infty_{\calH}(I,M;m_0,m_1)$ for the subset of $C^\infty_{\calH}(I,M)$ containing curves starting at $m_0$ and ending at $m_1$. Then, although there could be no manifold structure on $C^\infty_{\calH}(I,M;m_0,m_1)$, heuristically, we may think of the collection of curves $\gamma^s\in \calJ_{\calH}(\gamma)$ having the same derivative $\partial_s \gamma^s |_{s=0}= Z(t) \in \Vect(\gamma)$ as an equivalence class of curves in  $C^\infty_{\calH}(I,M;m_0,m_1)$ representing a tangent vector at $\gamma$. Then `bad curves', which we will call semi-rigid, can be considered as curves where `the tangent space is too small'. Let us provide the rigorous meaning to the above sentence.

If a curve $s \to \gamma^s$ is a variation of $\gamma^0 = \gamma$, i.e., it fixes the endpoints of $\gamma$, then it is clear that any vector field along $\gamma$ obtained by $Z(t) := \partial_s\gamma^s|_{s=0}$, must vanish at the endpoints. If in addition, a variation  is horizontal, then we want to find an additional condition for $Z$ related to the curve horizontality property. Recall that the {\it canonical flip} $\jmath$ is a unique vector bundle isomorphism making the following diagram commute
$$\xymatrix{T(TM) \ar[rr]^\jmath \ar[rd]_{\pr_{TM}} & & T(TM) \ar[ld]^{d(\pr_{M})} \\ & TM.}$$

First let us observe that here, $\calH$ and consequently $T\calH$, are viewed as sub-manifolds of $TM$ and $T(TM)$ respectively. We remark that although $T\calH$ considered  this way, will not be a sub-bundle of the vector bundle $T(TM)$, its image under the canonical flip will have this property, and hence, the concept of $\jmath(T\calH)$-horizontality on the manifold $TM$ is well defined.
Now we are ready to formulate the following statement.

\begin{lemma}
Let $s \mapsto \gamma^s$ be a smooth curve in $C^\infty(I,M)$, defined in an interval $(-\varepsilon, \varepsilon)$ with $\gamma^0 = \gamma$. Assume that for each fixed $s$, $\gamma$ is $\calH$-horizontal, and define
$$Z(t):= \partial_s \gamma^s(t) |_{s=0}\in \Vect(\gamma).$$
Then the curve $t \mapsto Z(t)$ in $TM$ is $\jmath(T\calH)$-horizontal.
\end{lemma}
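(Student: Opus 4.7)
The plan is to realize the variation as a smooth map of two variables and use the defining property of the canonical flip $\jmath$, namely that it intertwines the two ways of taking iterated tangent maps of such a map. Set $\Phi \colon I \times (-\varepsilon, \varepsilon) \to M$ by $\Phi(t,s) = \gamma^s(t)$, so that $\Phi$ is smooth, $\gamma(t) = \Phi(t,0)$, and $Z(t) = \partial_s \Phi(t,s)|_{s=0}$. The claim to prove is that the curve $t \mapsto Z(t)$ in $TM$ is $\jmath(T\calH)$-horizontal, i.e., $\dot Z(t) \in \jmath(T\calH)$ for every $t \in I$, or equivalently $\jmath^{-1}(\dot Z(t)) \in T\calH$.

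First I would extract the horizontality condition. By assumption each $\gamma^s$ is $\calH$-horizontal, so the smooth map
$$\Psi\colon I \times (-\varepsilon, \varepsilon) \to TM, \qquad \Psi(t,s) := \partial_t \Phi(t,s) = \dot\gamma^s(t),$$
takes values in the sub-manifold $\calH \subset TM$. Fixing $t$ and differentiating in $s$ at $s=0$, the curve $s \mapsto \Psi(t,s)$ lies entirely in $\calH$, so its velocity
$$W(t) := \partial_s \Psi(t,s)\big|_{s=0} = \partial_s \partial_t \Phi(t,s)\big|_{s=0}$$
is a tangent vector to $\calH$, that is, $W(t) \in T\calH$ for every $t \in I$.

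Next I would apply the canonical flip. By construction, $\jmath$ is the unique isomorphism $T(TM)\to T(TM)$ satisfying $\pr_{TM}\circ \jmath = d(\pr_M)$ and $d(\pr_M)\circ \jmath = \pr_{TM}$; equivalently, for any smooth two-parameter map $\Phi(t,s)$ into $M$ we have
$$\partial_t \partial_s \Phi(t,s) = \jmath\bigl(\partial_s \partial_t \Phi(t,s)\bigr).$$
(This is the standard characterization of $\jmath$ in convenient calculus and reduces in charts to the symmetry of mixed second partial derivatives.) Applying this at $s = 0$ gives
$$\dot Z(t) = \partial_t \partial_s \Phi(t,s)\big|_{s=0} = \jmath(W(t)),$$
and since $W(t) \in T\calH$ by the previous step, we conclude $\dot Z(t) \in \jmath(T\calH)$ as required.

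The only genuinely non-routine point will be justifying the formula $\partial_t\partial_s\Phi = \jmath\circ\partial_s\partial_t\Phi$ in the convenient setting; I would handle it by working in a local chart on $M$ (so $\Phi$ becomes a smooth map into a $c^\infty$-open subset of the convenient vector space), where the iterated tangent maps reduce to ordinary iterated Gâteaux derivatives and $\jmath$ becomes the coordinate swap on the second-order bundle, making the identity an instance of the symmetry of mixed partials. The remark in the statement that $\jmath(T\calH)$ is a sub-bundle of $T(TM)$ is exactly what is needed for the phrase ``$\jmath(T\calH)$-horizontal curve in $TM$'' to make sense, and the argument above shows the curve $t \mapsto Z(t)$ satisfies that condition.
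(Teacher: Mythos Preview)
Your proof is correct and follows essentially the same route as the paper: use horizontality of each $\gamma^s$ to see that $\partial_s\dot\gamma^s(t)|_{s=0}\in T\calH$, then apply the canonical flip identity $\partial_t\partial_s\Phi=\jmath(\partial_s\partial_t\Phi)$ to conclude $\dot Z(t)\in\jmath(T\calH)$. The paper's argument is more terse, but the logical content is identical; your added remark about justifying the flip identity in the convenient setting via local charts is a reasonable elaboration.
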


\begin{proof}
Since $\gamma^s$ is horizontal for any $s$, we know that $\dot\gamma^s(t) \in \calH_{\gamma^s(t)}$ for any $s,t$. In addition, the derivative of the curve $Z(t)$ in $TM$ satisfies
$$\partial_t Z(t) = \partial_t \partial_s \gamma^s(t)|_{s=0} = \jmath(\partial_s \partial_t \gamma^s(t))|_{s=0} = \jmath(\partial_s \dot\gamma^s(t)|_{s=0}).$$
Clearly, $\partial_s \dot\gamma^s(t)|_{s=0}$ is a tangent vector to $\calH$ at the point $\dot \gamma(t)$. Hence, $\partial_t Z(t) \in \jmath(T\calH)$ for any $t$.
\end{proof}

We will denote the space of all vector fields $Z$ along $\gamma$ that are $\jmath(T\calH)$-horizontal by $\Vect_{\calH}(\gamma).$ Furthermore, let us write $\Vect^{fix}_{\calH}(\gamma)$ for the subspace of $\Vect_{\calH}(\gamma)$ consisting of vector fields satisfying
$$X(0) = \vec{0}_{\gamma(0)}, \qquad X(1) = \vec{0}_{\gamma(1)}.$$
The expression $\vec{0}_m$ denotes the zero element in $T_mM$. Finally, we define 
\begin{equation}\label{def:varH}
\Var_{\calH}(\gamma)=\left\{ Z \in \Vect(\gamma) \, \colon \, Z(t) = \partial_s \gamma^s(t) |_{s=0} \text{ for some } \gamma^s \in \calJ_{\calH}(\gamma) \right\}.
\end{equation}

The sets $\Vect^{fix}_{\calH}(\gamma)$ and $\Var_{\calH}(\gamma)$ are not the same in general, i.e.,  not all  vector fields $X \in \Vect^{fix}_{\calH}(\gamma)$ can by obtained from some horizontal variation. This fact brings us to the following definition.

\begin{definition}
We say that a curve $\gamma\in C^{\infty}_{\calH}(I,M)$ is semi-rigid, if $\Var_{\calH}(\gamma)$ is a proper subset of $\Vect_{\calH}^{fix}(\gamma)$.
\end{definition}

Notice that the definition of semi-rigid curves depend on $\calH$ only, and does not invoke the metric on $\calH$ in any way. 
The following example in finite dimensions  justifies the term semi-rigid. 

\begin{example}
Let $\real^3$ be the Euclidean space with coordinates $(x, y, z)$, and let $\calH$ be a distribution spanned by the vector fields
$$
X = \partial_{x} - \tfrac{1}{2} y^2 \partial_{z} \quad \text{ and } \quad Y = \partial_{y}.
$$
The distribution $\calH$ is called the Martinet distribution. A curve $\gamma(t) = (x(t), y(t), z(t))$ is horizontal if
\begin{equation} \label{eq:Martinet} 
\dot z = - \tfrac{1}{2} \,  y^2 \, \dot x.
\end{equation}
Let $Z \in \Vect(\gamma)$ be written in coordinates as $Z(t) = u(t) \partial_{x}|_{\gamma(t)}  + v(t) \partial_{y}|_{\gamma(t)} + w(t) \partial_{z}|_{\gamma(t)}$. Considering an $\calH$-horizontal variation of $\gamma$, we deduce that $t \mapsto Z(t)$ is in $\Vect_{\calH}(\gamma)$ if, in addition to~\eqref{eq:Martinet}, we have

\begin{equation} \label{eq:MartinetTH} 
\dot w =  -\tfrac{1}{2} y^2 \dot u - y v \dot x.
\end{equation}

Take a particular choice of $\hat\gamma(t) = (x(t), y(t), z(t)) = (t,0,0)$, $t\in[0,1]$.  It is known that this curve is singular,  see, e.g.,~\cite[Section~3.3]{Mon}. Moreover,  it is a local length minimizer with respect to any metric $\mathbf h$ on $\calH$. However, it is not a normal minimizer for a generic choice of $\mathbf h$. Let us show that it is semi-rigid. Pick any vector field $Z\in\Vect_{\calH}^{fix}(\hat\gamma)$.  Then, by~\eqref{eq:MartinetTH}, we obtain that $\dot w(t)=0$. The condition $Z(0) = \vec{0}_{\hat\gamma_0}$ implies that $w(t)=0$ for all $t$. Thus, the general form of $Z(t)$ is  
\begin{equation}\label{Z}
Z(t)= u(t) \partial_u|_{\hat\gamma(t)} + v(t) \partial_{v} |_{\hat\gamma(t)}\quad\text{with}\quad u(0) = v(0) = u(1) = v(1) = 0.
\end{equation}
Now let us show that there is no variations $\gamma^s\in \calJ_{\calH}(\hat\gamma)$ except of a reparametrizaition of~$\hat\gamma$. Choose any $\gamma^s = (x^s, y^s, z^s)$ from $\calJ_{\calH}(\hat\gamma)$. If a vector field $Z\in\Vect_{\calH}^{fix}(\hat\gamma)$ were obtained from the variation $\gamma^s$, then we would have
$$
x^s(t) = t+ su(t) + o(s), \qquad y^s(t) = sv(t) + o(s), \qquad z^s(t) = o(s),
$$
for some functions $u,v$ satisfying $u(0) = v(0) = u(1) = v(1) = 0$. However, integrating~\eqref{eq:Martinet}, we obtain the formula
$$z^s(1) = -\frac{1}{2} s^2 \int_0^1 v(t)^2 \, dt + o(s^2).$$
The value of $z^s(1)$ is strictly negative for a sufficiently small $s$, unless $v \equiv 0$. So we conclude that if $\gamma^s\in \calJ_{\calH}(\hat\gamma)$, then $Z(t) = \partial_s \gamma^s(t) |_{s=0}$ can  hold only if
$Z(t) = u(t) \partial_x |_{\hat\gamma(t)}$.
\end{example}

As an additional information,  the above example shows that any $\gamma^s\in \calJ_{\calH}(\hat\gamma)$ for $\hat\gamma(t) = (t , 0, 0)$ is a reparametrization of $\hat\gamma$. Such kind of curves in literature are called {\it rigid} or $C^1${\it{-rigid}}. Intuitively this means that a rigid curve can not be deformed by any means  keeping endpoints fixed without loosing $\calH$-horizontality. This is our motivation for the terminology semi-rigid. A semi-rigid curve, in general, can be deformed but possibly not in all directions. Obviously, rigid curves are semi-rigid except for the trivial case when the horizontal sub-bundle $\calH$ is of rank $1$.

The results of~\cite[p. 439]{BH} show that  if  the sets $\Var_{\calH}(\gamma)$ and $\Vect^{fix}_{\calH}(\gamma)$  coincide for a curve $\gamma$, then the curve is regular, or in our terminology, is not semi-rigid.
Reversing this statement we come to the following conclusion.
\begin{proposition}
Semi-rigid curves are singular.
\end{proposition}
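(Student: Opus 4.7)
The plan is to argue by contrapositive: assuming $\gamma$ is not singular, i.e., a regular point of the endpoint map $\End_{m_0}$, I will show $\gamma$ is not semi-rigid. Since singular curves are defined only in finite dimensions (Section~\ref{sec:sRfinite}), the entire argument takes place on the Hilbert manifold $\abscon_{\calH}(m_0)$ modeled on $L^{2}(I,\real^{k})$, with $k$ the rank of $\calH$, and with $m_0=\gamma(0)$, $m_1=\gamma(1)$.

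First I would establish the two identifications at the heart of the proof. A tangent vector at $\gamma$ to $\abscon_{\calH}(m_0)$ corresponds to a vector field $Z$ along $\gamma$ with $Z(0)=\vec{0}_{m_0}$, which by the argument of the preceding lemma (applied to one-endpoint variations rather than two) is automatically $\jmath(T\calH)$-horizontal. Under this identification, the differential of the endpoint map reads $d_\gamma \End_{m_0}(Z)=Z(1)$, so its kernel coincides with $\Vect_{\calH}^{fix}(\gamma)$. On the other hand, $\Var_{\calH}(\gamma)$ defined in~\eqref{def:varH} is precisely the image of those tangent vectors at $\gamma$ that arise from smooth paths $s\mapsto\gamma^s$ lying in $\End_{m_0}^{-1}(m_1)=\abscon_{\calH}(m_0,m_1)$.

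Second, regularity of $\gamma$ means $d_\gamma \End_{m_0}$ is surjective, so the Hilbert-space implicit function theorem equips $\End_{m_0}^{-1}(m_1)$ with the structure of a smooth submanifold near $\gamma$ with tangent space $\ker d_\gamma \End_{m_0}=\Vect_{\calH}^{fix}(\gamma)$. Every $Z\in \Vect_{\calH}^{fix}(\gamma)$ is then realised as $\partial_s\gamma^s|_{s=0}$ for some smooth path $s\mapsto\gamma^s$ inside that submanifold, yielding $\Vect_{\calH}^{fix}(\gamma)\subseteq\Var_{\calH}(\gamma)$; combined with the reverse inclusion from the preceding lemma this forces equality, so $\gamma$ is not semi-rigid.

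The main obstacle is the smoothness upgrade. The implicit function theorem produces a family $\gamma^s$ whose dependence on the curve parameter $t$ is a priori only in the $L^{2}$ sense, whereas $\Var_{\calH}(\gamma)$ in~\eqref{def:varH} requires $\gamma^s\in\calJ_{\calH}(\gamma)$, i.e., joint smoothness of the map $(t,s)\mapsto\gamma^s(t)$. Bootstrapping from Sobolev to $C^\infty$ regularity along a regular horizontal curve is precisely what is done in the reference~\cite[p.~439]{BH}; once this is granted, the contrapositive argument above closes and the proposition follows.
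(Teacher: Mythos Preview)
Your proposal is correct and follows essentially the same approach as the paper: both rely on the result of \cite[p.~439]{BH} that a regular point of the endpoint map satisfies $\Var_{\calH}(\gamma)=\Vect_{\calH}^{fix}(\gamma)$, and then take the contrapositive. The paper simply cites this fact, whereas you have unpacked the underlying argument (implicit function theorem on the Hilbert manifold $\abscon_{\calH}(m_0)$, identification of $\ker d_\gamma\End_{m_0}$ with $\Vect_{\calH}^{fix}(\gamma)$, and the $L^2$-to-$C^\infty$ regularity upgrade), correctly flagging the smoothness bootstrap as the nontrivial step handled in \cite{BH}.
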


It is worth noticing that the gap in the inclusion $\Var_{\calH}(\gamma) \subseteq \Vect^{fix}(\gamma)$ was observed  before (e.g., \cite{Hamenstadt, Mon94, Montsingular, Mon}), which essentially led to the study of singular, abnormal, and especially, rigid curves. The endpoint map and the Pontryagin Maximum Principle are the crucial tools, which are not available in the case of infinite-dimensional manifolds, therefore, we  give definitions using only the presence or absence of variational vector fields. 


\subsection{Local viewpoint  through adjoints}\label{sec:InfiniteMF}

Let $M$ be a manifold,  $(\calH, \mathbf h)$ be a sub-Riemannain structure on $M$, and let $\mathbf g$ be a Riemannian metric taming $\mathbf h$. We denote $\calV = \calH^\perp$ and choose a bundle chart in a neighborhood $U\subset M$: 
$$\begin{array}{ccc} TU & \to & U \times V \\
v \in T_mU & \mapsto & (m, \theta(v)) \end{array},$$
where  $V$ is some convenient vector space,
such that 
\begin{itemize}
\item[1)]{there is a splitting $V = \calH_0 \oplus \calV_0$ satisfying
$$\theta^{-1}(\calH_0) = \calH \cap TU, \qquad \theta^{-1}(\calV_0) = \calV \cap TU;$$}
\item[2)]{there exists an inner product $\langle \cdot , \cdot \rangle$ on $V$, satisfying
$$\mathbf g(v_1, v_2) = \langle \theta(v_1), \theta(v_2) \rangle.$$}
\end{itemize}
We can always assume 1), but this is not necessarily true for the second assertion.
If there is a basis of orthogonal vector fields  in $U$, then we can use this basis to construct a bundle chart satisfying 2). We consider $\theta$ as an $V$-valued one-form on $U$, and $d\theta$ to be the exterior differential of $\theta$.

Further, we make the following assumptions on $\theta$ and $\langle \cdot, \cdot \rangle$:
\begin{itemize}
\item[(A)] There is a bilinear map $a^\top: V \times V \to V$, satisfying
$$\langle d\theta(v_1, v_2), u \rangle = \langle \theta(v_2), a^\top(\theta(v_1), u) \rangle, \qquad v_1, v_2 \in T_mM, \quad u \in V.$$
The notation $a^{\top}$ is introduced by similarity with the adjoint to $d\theta$;
\item[(B)] For a chosen curve $\gamma\in C^{\infty}(I,M)$, we define a map $\Xi_{\gamma}: \Vect(\gamma) \to C^\infty(I,V),$ by
$$\Xi_{\gamma}(X)(t) = \partial_t \theta(X(t)) - d\theta(\dot \gamma, X(t)),\ \ t\in I.$$
We suppose that for any $y \in C^\infty(I,V)$, the Cauchy problem
$$\Xi_\gamma(X) = y, \qquad X(0) = \vec{0}_m,$$
has a unique solution $X=\Xi_{\gamma}^{-1} y $.
\end{itemize}

Given these assumptions, we look for sub-Riemannian geodesics among the curves of two types: semi-rigid curves for which $\Var_{\calH}(\gamma)$ is a proper subset of $\Vect_{\calH}^{fix}(\gamma)$, and the other ones for which $\Var_{\calH}(\gamma)=\Vect_{\calH}^{fix}(\gamma)$. The main result is the following.

\begin{theorem} \label{theorem:main}
Assume that $\gamma$ is a sub-Riemannian geodesic on $(M,\calH, \mathbf h)$. Then either $\gamma$ is semi-rigid or there is a curve $\lambda \in C^\infty(I, \calV_0)$, such that $\lambda$ and $\gamma$ satisfy the system of equations
\begin{equation} \label{eq:Normalgeod} \theta(\dot \gamma) = u, \qquad \dot u = -\pr_{\calH_0} a^\top(u,u+ \lambda), \qquad \dot \lambda = -\pr_{\calV_0} a^\top(u, u+ \lambda).\end{equation}

\smallskip
Conversely, any curve $\gamma\in C^{\infty}_{\calH}(I,M)$ satisfying system~\eqref{eq:Normalgeod} is a sub-Riemannian geodesic. A semi-rigid curve does not need to be a geodesic.
\end{theorem}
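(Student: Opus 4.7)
The plan is to compute the first variation of $E$ in the bundle chart $\theta$, use (A) to integrate by parts and (B) to solve the resulting adjoint Cauchy problem, and then invoke a Lagrange-multiplier argument to produce $\lambda$ in the non-semi-rigid case. I would first establish the identity $\partial_s\theta(\dot\gamma^s)|_{s=0}=\Xi_\gamma(Z)$ for any variation $\gamma^s$ with $Z=\partial_s\gamma^s|_{s=0}$; this follows from the commutativity $\partial_s\partial_t=\jmath\,\partial_t\partial_s$ used in the preceding lemma together with $d\theta(\dot\gamma,Z)=\partial_t\theta(Z)-\partial_s\theta(\dot\gamma^s)|_{s=0}$ (the Cartan formula applied to the commuting pair $\partial_t\Phi,\partial_s\Phi$). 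Substituting into $\partial_sE(\gamma^s)|_{s=0}=\int_0^1\langle u,\Xi_\gamma(Z)\rangle\,dt$, integrating by parts (boundary terms vanish since $Z(0)=Z(1)=\vec 0$), and rewriting the mixed term via (A) with $v_1=\dot\gamma$, $v_2=Z$ yields the master identity
\begin{equation*}
\partial_sE(\gamma^s)|_{s=0}=-\int_0^1\langle\theta(Z),\,\dot u+a^\top(u,u)\rangle\,dt.
\end{equation*}

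For the converse I would start from $(\gamma,\lambda)$ satisfying \eqref{eq:Normalgeod}. Any $Z\in\Var_\calH(\gamma)\subseteq\Vect_\calH(\gamma)$ has $\Xi_\gamma(Z)\in\calH_0$, and condition~2) of the chart makes $\theta$ an isometry, so the $\mathbf g$-orthogonality $\calH\perp\calV$ descends to $\calH_0\perp\calV_0$ in $\langle\cdot,\cdot\rangle$; hence $\langle\lambda,\Xi_\gamma(Z)\rangle\equiv 0$ and $u$ may be replaced by $u+\lambda$ inside the master identity for free. A second integration by parts plus (A) upgrades it to $-\int_0^1\langle\theta(Z),\dot u+\dot\lambda+a^\top(u,u+\lambda)\rangle\,dt$, and decomposing the integrand along $V=\calH_0\oplus\calV_0$ shows that it vanishes pointwise as a direct consequence of \eqref{eq:Normalgeod}.

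For the forward direction I would apply the Lagrange-multiplier principle to the constrained problem $\min E(\gamma)$ subject to $\pr_{\calV_0}\theta(\dot\gamma)=0$. The auxiliary functional $L(\gamma,\lambda)=\int_0^1\bigl[\tfrac12\langle\theta(\dot\gamma),\theta(\dot\gamma)\rangle-\langle\lambda,\pr_{\calV_0}\theta(\dot\gamma)\rangle\bigr]\,dt$ with $\lambda\in C^\infty(I,\calV_0)$ has Euler-Lagrange equations, derived by running the same integration-by-parts and (A) computation on arbitrary (not necessarily horizontal) variations, that reduce exactly to \eqref{eq:Normalgeod} after a sign convention on $\lambda$. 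Non-semi-rigidity of $\gamma$ plays the role of the constraint-qualification hypothesis needed for the Lagrange-multiplier theorem to apply: it guarantees that the linearization of the horizontality constraint is surjective onto the appropriate target, and the multiplier $\lambda$ is then read off from this surjectivity. The closing sentence, that a semi-rigid curve need not be a geodesic, follows because the variational condition $\int_0^1\langle\theta(Z),\dot u+a^\top(u,u)\rangle\,dt=0$ is still required on $Z\in\Var_\calH(\gamma)$ and can easily fail for generic choices, so one can produce an explicit counterexample by exhibiting a semi-rigid $\gamma$ whose first-variation integrand is nonzero against some element of $\Var_\calH(\gamma)$. The main technical obstacle is precisely this Lagrange-multiplier step, since in the convenient-calculus setting the usual finite-dimensional or Hilbert-space theorems have no off-the-shelf analogue, and the argument has to be built by hand from the structural input provided by (A) and (B).
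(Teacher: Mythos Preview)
Your first-variation identity $\partial_sE(\gamma^s)|_{s=0}=\int_0^1\langle u,\Xi_\gamma(Z)\rangle\,dt$ and your treatment of the converse are essentially what the paper does (its Observation~I, and the reverse implication via $y=u+\lambda$ satisfying $\dot y=-a^\top(u,y)$). The genuine gap is in the forward direction: you invoke an abstract Lagrange-multiplier theorem in the convenient-space setting and correctly identify this as the obstacle, but you do not supply the replacement argument.

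The paper does not use any multiplier theorem. Instead it works with orthogonal complements directly. The key is \emph{Observation~II}: for \emph{any} $y\in\bigl(\Xi_\gamma\Vect^{fix}(\gamma)\bigr)^\perp$ (orthogonal complement in the full space $C^\infty(I,V)$, not just in $C^\infty(I,\calH_0)$), the same integration by parts you performed, now tested against \emph{all} $X\in\Vect^{fix}(\gamma)$, forces $\dot y+a^\top(u,y)=0$ pointwise. This ODE characterization of the orthogonal complement is exactly the ``built by hand'' substitute for the multiplier rule. The dichotomy then runs as follows: since $\gamma$ is a geodesic, $u\perp\Xi_\gamma\Var_\calH(\gamma)$. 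Either (Case~a) the stronger orthogonality $u\perp\Xi_\gamma\Vect^{fix}_\calH(\gamma)$ also holds, or (Case~b) it fails. In Case~a, using $\Xi_\gamma\Vect^{fix}_\calH(\gamma)=\pr_{\calH_0}\Xi_\gamma\Vect^{fix}(\gamma)$ (this is where assumption~(B) is used), one shows $u$ lies in $\pr_{\calH_0}\bigl(\Xi_\gamma\Vect^{fix}(\gamma)\bigr)^\perp$, so there exists $y$ in the full orthogonal complement with $\pr_{\calH_0}y=u$; set $\lambda=\pr_{\calV_0}y$ and read off \eqref{eq:Normalgeod} from the ODE for $y$. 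In Case~b there is some $Y\in\Vect^{fix}_\calH(\gamma)$ with $\llangle u,\Xi_\gamma(Y)\rrangle\neq 0$, while $\llangle u,\Xi_\gamma(X)\rrangle=0$ for all $X\in\Var_\calH(\gamma)$; hence $Y\notin\Var_\calH(\gamma)$ and $\gamma$ is semi-rigid.

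So the missing idea is not a general multiplier theorem but this direct ODE description of $(\Xi_\gamma\Vect^{fix}(\gamma))^\perp$, together with the case split based on whether $u$ annihilates all of $\Xi_\gamma\Vect^{fix}_\calH(\gamma)$ or only the smaller subspace $\Xi_\gamma\Var_\calH(\gamma)$. Note also that your framing ``non-semi-rigid $\Rightarrow$ constraint qualification $\Rightarrow$ multiplier exists'' is the contrapositive of the paper's ``Case~b $\Rightarrow$ semi-rigid'', so the logical skeleton is equivalent; what is missing is the constructive Case~a argument that produces $\lambda$ without appeal to any abstract theorem.
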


We emphasize that the `or' in Theorem~\ref{theorem:main} is not exclusive. A sub-Riemannian geodesic may be  semi-rigid and may satisfy~\eqref{eq:Normalgeod} at the same time. Inspired by this theorem, we give the following definition of normal geodesics and show in Section~\ref{sec:NormalFD} that for all finite-dimensional Riemannian manifolds our new definition coincides with the classical one.

\begin{definition}\label{def:normalsRg}
A sub-Riemannian geodesics $\gamma$, which is a solution to~\eqref{eq:Normalgeod} for some $\lambda \in C^\infty(I, \calV_0)$ is called normal.
\end{definition}

\begin{proof}[Proof of Theorem~\ref{theorem:main}]
We start from two general observations and then apply them to our particular situation.

{\sc Observation I.}
Define an inner product in the space $C^\infty(I,V)$  by
$$\llangle x, y \rrangle = \int_0^1 \langle x(t), y(t) \rangle \, dt.$$

We extend the definition of energy to all curves by the formula $E(\gamma) = \frac{1}{2} \int_0^1 \mathbf g(\dot \gamma(t), \dot \gamma(t)) \, dt.$ Let $\gamma$ be an arbitrary, not necessarily horizontal, curve $\gamma\in C^{\infty}(I,M)$, and let $\gamma^s$ be its variation in the sense of~\eqref{def:genvar}. Define 
$u^s(t) = \theta(\dot \gamma^s(t))$ and $Z(t)=\partial_s \gamma^s(t) |_{s=0}$.
If we denote by $[\gamma^s(t)]^*$  the pullback along the map $(t,s) \mapsto \gamma^s(t)$, then
\begin{align} \label{eq:uXiZ}
\partial_s u^s(t) |_{s=0}  & = \partial_s \theta(\partial_t \gamma^s(t))|_{s=0} = \partial_s [\gamma^s(t)]^*\theta(\partial_t) |_{s=0}\\ \nonumber
& = \left. \Big(\partial_t [\gamma^s(t)]^*\theta(\partial_s) - d [\gamma^s(t)]^*\theta(\partial_t, \partial_s) \Big) \right|_{s=0} \\ \nonumber
& = \partial_t \theta(Z(t)) - d \theta(\dot \gamma(t), Z(t)) = \Xi_{\gamma}(Z)(t).
\end{align}
Therefore, writing $u = \theta(\dot \gamma)$, we obtain
\begin{equation}\label{eq:orthoguZ}
\partial_s E(\gamma^s) |_{s=0} = \int_0^1 \langle u(t), \partial_s u^s(t) |_{s=0} \rangle \, dt = \int_0^1 \langle u(t), \Xi_{\gamma}(Z)(t) \rangle \, dt = \llangle u, \Xi_{\gamma}(Z) \rrangle
\end{equation}
for any vector field $Z$ associated with the variation $\gamma^s$.

{\sc Observation II.}
Define 
$$
\Vect^{fix}(\gamma) = \left\{ X \in \Vect(\gamma) \, \colon \, X(0) = \vec{0}_{\gamma(0)}, 
 X(1) = \vec{0}_{\gamma(1)}\right\},
 $$
and let $X \in \Vect^{fix}(\gamma)$ and $y\in \Big(\Xi_{\gamma} \Vect^{fix}(\gamma) \Big)^\perp$, where the orthogonal complement is taken with respect to the product $\llangle \cdot, \cdot \rrangle$. Then the following equality 
\begin{align*}
0 = \llangle y, \Xi_{\gamma}(X) \rrangle & = \int_0^1 \langle y(t), \partial_t\theta(X(t)) - d\theta(\dot \gamma(t), X(t)) \rangle dt \\
& = - \int_0^1 \Big\langle \dot y(t) + a^\top(\theta(\dot \gamma(t)), y(t)), \theta(X(t)) \Big\rangle \, dt= - \llangle \dot y + a^\top(u,y),  \theta(X) \rrangle
\end{align*}
holds. Since $X$ is chosen arbitrarily, the curve $y$ is a solution to $\dot y = - a^\top(u,y)$. 

Now let $\gamma$ be a sub-Riemannian geodesic and $\gamma^s\in\calJ_{\calH}(\gamma)$. Then $Z=\partial_s \gamma^s(t) |_{s=0}\in\Var_{\calH}(\gamma)$ by~\eqref{def:varH}. Moreover, $u$ and $\partial_s u^s |_{s=0}\in C^\infty(I, \calH_0)$, and relation~\eqref{eq:uXiZ} implies that $\Xi_{\gamma}(Z)$ is also from $C^\infty(I, \calH_0)$.
We conclude by~\eqref{eq:orthoguZ} that $\gamma$ is a sub-Riemannian geodesic, if and only if, $u\in\Big(\Xi_{\gamma} \Var_{\calH}(\gamma)\Big)^{\bot}$ in $C^\infty(I, \calH_0)$ with respect to the inner product $ \llangle \cdot , \cdot \rrangle$.

The inclusion $\Var_{\calH}(\gamma)\subseteq \Vect^{fix}_{\calH}(\gamma)$ implies $\Big(\Xi_{\gamma}\Var_{\calH}(\gamma)\Big)^{\bot}\supseteq\Big(\Xi_{\gamma}\Vect^{fix}_{\calH}(\gamma)\Big)^{\bot}$. We consider two cases 
\begin{itemize}
\item[a)]{$u\in \Big(\Xi_{\gamma}\Vect^{fix}_{\calH}(\gamma)\Big)^{\bot}$,}
\item[b)]{$u\in\Big(\Xi_{\gamma}\Var_{\calH}(\gamma)\Big)^{\bot}$ but not in $\Big(\Xi_{\gamma}\Vect^{fix}_{\calH}(\gamma)\Big)^{\bot}$.}
\end{itemize}

Case a). Observe that $\Vect^{fix}_{\calH}(\gamma) = \Xi^{-1}_{\gamma} \pr_{\calH_0} \Xi_{\gamma}\Vect^{fix}(\gamma)$. Hence, we obtain that 
\[
u\in\Big(\pr_{\calH_0} \Xi_{\gamma} \Vect^{fix}(\gamma) \Big)^\perp=\pr_{\calH_0} \Big(\Xi_{\gamma} \Vect^{fix}(\gamma) \Big)^\perp,
\]
where the orthogonal complement is taken with respect to $\llangle \cdot , \cdot \rrangle$, but the first one in the space $C^{\infty}(I,\calH_0)$ and the second one in $C^{\infty}(I,V)$.

Let $y$ be an arbitrary element in $\Big(\Xi_{\gamma} \Vect^{fix}(\gamma) \Big)^\perp$. Then  for any $X \in \Vect^{fix}(\gamma)$ Observation II implies that $y$ is a solution to $\dot y = - a^\top(u,y)$. Now set $u=\pr_{\calH_0} y$ and $ \lambda=\pr_{\calV_0} y$ in order to obtain~\eqref{eq:Normalgeod}. We conclude that in this case the sub-Riemannian geodesic $\gamma$ is normal. 

Case b). There is $Y \in \Vect^{fix}_{\calH}(\gamma)$ such that $\llangle u, \Xi_{\gamma}(Y) \rrangle \neq 0$, but $\llangle u, \Xi_{\gamma}(X) \rrangle = 0$ for any $X \in \Var_{\calH}(\gamma)$. So the inclusion $\Var_{\calH}(\gamma) \subseteq \Vect^{fix}_{\calH}(\gamma)$ is proper, because $Y$ cannot be in $\Var_{\calH}(\gamma)$. Thus the sub-Riemannian geodesic $\gamma$ is semi-rigid curve in this case. 

To show the converse statement to Theorem~\ref{theorem:main}, we choose an arbitrary curve $\gamma\in C^{\infty}_{\calH}(I,M)$, that satisfies system~\eqref{eq:Normalgeod} for some $\lambda\in C^{\infty}(I,\calV_0)$. Then $y=u+\lambda$ satisfies the equation $\dot y = - a^\top(u,y)$ by linearity of $a^{\top}$. Observation II yields that $\gamma$ is a sub-Riemannian geodesic. 
\end{proof}

\subsection{Comparison with the finite-dimensional case} \label{sec:NormalFD} 

Although the assumptions made in Section~\ref{sec:InfiniteMF} might seem very specific, we will show that all finite-dimensional sub-Riemannian manifolds can locally be described this way. We also show that Definition~\ref{def:normalsRg} of normal geodesics coincides with the one given in Section~\ref{sec:sRfinite}, justifying the terminology. Observe, that in finite dimensions, the normal geodesics are local minimizers, hence we loose nothing by restricting the considerations to an arbitrarily small neighbourhood. 

Let $(M, \calH, \mathbf h)$ be an arbitrary $n$-dimentional sub-Riemannian manifold, where $\calH$ has rank $k$, and let $\mathbf g$ be a metric taming $\mathbf h$. Let $U$ be a sufficiently small neighborhood, such that there exists an orthonormal with respect to $\mathbf g$ basis $X_1, \dots, X_n$ of vector fields on~$U$. From this basis choose vector fields $X_1, \dots, X_k$, such that they span~$\calH|_U$, and pick up a corresponding co-frame $\theta_1, \dots, \theta_n$. Then the form $\theta = (\theta_1, \dots, \theta_n)$ is $\real^n$- valued one-form. We extend the class of smooth curves by including absolutely continuous curves, and remark that a curve $\gamma$ is horizontal if and only if $\theta(\dot \gamma(t))$ is contained in $\real^k \times \{0\}$ for almost all $t$.

Let $\abscon(m_0)$ be the collection of all absolutely continuous square integrable curves in $U$ starting at $m_0$. Then the map
$$
\begin{array}{ccccc}
\Theta\colon & \abscon(m_0) &\to & L^2(I,\real^n)
\\
& \gamma &\mapsto &\theta(\dot \gamma)
\end{array}
$$
is a diffeomorphism onto a neighborhood of $0 \in L^2(I, \real^n)$, see \cite[Lemma 2.1]{Hamenstadt}. It can be easily verified that $d_{\gamma}\Theta = \Xi_{\gamma}$ in this case, and the mapping $\Xi_{\gamma}$ is invertible, since $\Theta$ is a diffeomorphism.

In coordinates, it admits the following form. If $x = (x_1, \dots, x_n)\in \real^n$, then for any pair for vector fields $Y$ and $Z$ on $U$ with $\theta(Y) = y = (y_1, \dots, y_n)$ and $\theta(Z) = z = (z_1, \dots, z_n)$, we have
$$
\langle d\theta(Y, Z ) , x \rangle = \sum_{i,j,l=1}^n x_i y_j z_l(\Gamma_{lj}^i - \Gamma_{jl}^i), \qquad \Gamma_{jl}^i := \mathbf g(\nabla_{X_j} X_l, X_i).
$$ 
Here $\langle \cdot,\cdot \rangle $ is the standard Euclidean inner product. We conclude that the adjoint map is given by
$$a^{\top}(y)x = \sum_{j,l=1}^n x_l y_j \Big(\Gamma_{1j}^l- \Gamma_{j1}^l, \Gamma_{2j}^l- \Gamma_{j2}^l, \dots, \Gamma_{nj}^l- \Gamma_{jn}^l \Big).$$

The following proposition justifies the use of the term `normal sub-Riemannian geodesic'. Let a sub-Riemannian Hamiltonian function $H_{sR}(m,p)$ be given by~\eqref{eq:sRHamiltonian}.
 As it  was mentioned before, all such curves are always smooth local minimizer with respect to the metric $d_{C-C}$~\cite{Mon}.
We reserve first $k$ coordinates in $\real^n$ for the image of $\calH$ under $\theta$.

\begin{proposition}
A horizontal curve $\gamma\colon I\to U$ is a projection of a solution to the Hamiltonian system associated with the Hamiltonian function~\eqref{eq:sRHamiltonian}, if and only if, $\gamma$ is a solution to system~\eqref{eq:Normalgeod} for some curve $\lambda\colon I\to {0} \times \real^{n-k}$. 
\end{proposition}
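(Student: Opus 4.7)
The plan is to write out Hamilton's equations for $H_{sR}$ in the global fibre coordinates on $T^*U$ induced by the coframe $(\theta_1,\dots,\theta_n)$ dual to $(X_1,\dots,X_n)$, and then to match the result term-by-term with system \eqref{eq:Normalgeod}. For a lift $\psi\colon I\to T^*U$ of $\gamma$ I would set $p_i(t) := h_{X_i}(\psi(t))$, so that $\psi(t) = \sum_{i=1}^n p_i(t)\theta_i|_{\gamma(t)}$ and $H_{sR}(\psi) = \tfrac12\sum_{j=1}^k p_j^2$. Then I would split the momentum as
\[
u(t) := (p_1,\dots,p_k,0,\dots,0)\in\calH_0, \qquad \lambda(t) := (0,\dots,0,p_{k+1},\dots,p_n)\in\calV_0,
\]
so that $u+\lambda=(p_1,\dots,p_n)$; note that $\lambda$ automatically takes values in $\{0\}\times\real^{n-k}$, since it merely records the components of $\psi$ annihilating $\calH$.

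Next, I would invoke two standard identities for the Hamiltonian flow on $T^*M$: the Hamiltonian vector field of $h_X$ projects under $\pi$ to $X$, and the Poisson bracket of momentum functions satisfies $\{h_X,h_Y\} = h_{[X,Y]}$. The first immediately gives the base-projected equation $\dot\gamma = \sum_{j=1}^k p_j X_j(\gamma)$, which says that $\gamma$ is horizontal and that $\theta(\dot\gamma) = u$, i.e., the first line of \eqref{eq:Normalgeod}. The second, combined with $[X_i,X_j] = \sum_l(\Gamma^l_{ij}-\Gamma^l_{ji})X_l$ (valid because $(X_1,\dots,X_n)$ is $\mathbf g$-orthonormal), yields
\[
\dot p_i \;=\; \{H_{sR},h_{X_i}\}(\psi) \;=\; -\sum_{j=1}^k\sum_{l=1}^n p_j p_l\bigl(\Gamma^l_{ij}-\Gamma^l_{ji}\bigr).
\]
Comparing this with the closed-form expression $[a^\top(y,x)]_m = \sum_{l,j}x_l y_j(\Gamma^l_{mj}-\Gamma^l_{jm})$ derived above in this subsection, and substituting $y=u$ (which truncates the $j$-sum to $1\le j\le k$ because $u_j=0$ for $j>k$) together with $x=u+\lambda$, the right-hand side becomes exactly $-[a^\top(u,u+\lambda)]_i$. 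Projecting the vector equation $\dot p = -a^\top(u,u+\lambda)$ onto $\calH_0$ and $\calV_0$ then reproduces the second and third equations of \eqref{eq:Normalgeod}.

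The converse direction is the same calculation run in reverse: given $(\gamma,\lambda)$ solving \eqref{eq:Normalgeod} with $\lambda$ valued in $\calV_0$, I would define $\psi(t) := \sum_{i=1}^k u_i(t)\theta_i|_{\gamma(t)} + \sum_{i=k+1}^n \lambda_i(t)\theta_i|_{\gamma(t)}$; the equation $\theta(\dot\gamma)=u$ ensures that $\pi(\psi)=\gamma$ with the correct horizontal velocity, and the evolution equations for $u$ and $\lambda$ assemble into the fibre component of Hamilton's equation. I expect the main, and essentially only, obstacle to be bookkeeping: pinning down a sign convention on $T^*M$ compatible with both $\{h_X,h_Y\} = h_{[X,Y]}$ and the equation of motion $\dot f = \{H,f\}$, and verifying that the index placement in the explicit formula for $a^\top$ matches what comes out of Hamilton's equations, so that the minus sign in \eqref{eq:Normalgeod} is reproduced by the minus sign in $\{H_{sR},h_{X_i}\} = -\{h_{X_i},H_{sR}\}$. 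Once those conventions are fixed, the identification is purely algebraic.
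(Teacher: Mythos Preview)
Your proposal is correct and follows essentially the same route as the paper: both arguments pass to the fibre coordinates $p_i=h_{X_i}$ on $T^*U$ induced by the coframe $(\theta_1,\dots,\theta_n)$, use the identity $\{h_X,h_Y\}=h_{[X,Y]}$ together with $[X_i,X_j]=\sum_l(\Gamma^l_{ij}-\Gamma^l_{ji})X_l$ to compute $\dot p_i$, and then match the resulting expression against the explicit formula for $a^\top$ to recover \eqref{eq:Normalgeod} after splitting $p=u+\lambda$. Your version is slightly more careful in two places---you let the $l$-index run to $n$ rather than $k$ (the paper's displayed sum $\sum_{j,l=1}^k$ is a typo), and you spell out the converse explicitly---but there is no substantive difference in strategy.
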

\begin{proof}
Let us introduce the coordinates on the cotangent bundle $T^*M$ by writing
$p = \sum_{j=1}^n p_j \theta_j(m)$ for any $p \in T^*_mM$. Notice, that since $p_j = h_{X_j}(p)=p(X_j(m))$ by~\eqref{eq:sRHamiltonian}, the sub-Riemannian Hamiltonian can be written as $H_{sR}(p) = \sum_{j=1}^n p_j^2.$ As a consequence we arrive at
$$\left( \frac{\partial H_{sR}}{\partial p_1}, \dots, \frac{\partial H_{sR}}{\partial p_n} \right) = (p_1, \dots, p_k, 0, \dots, 0),$$
\begin{align*} \{h_{X_i}, H_{sR}\}(p)  &= -\sum_{j=1}^k h_{X_j}(p) h_{[X_i, X_j]}(p) = -\sum_{j,l=1}^k p_j p_l (\Gamma_{ij}^l - \Gamma_{ji}^l).\end{align*}
Let $t \mapsto p(t)$ be a curve in $T^*U$ that is projected to $\gamma$ with $p_j(t) = h_{X_j}(p(t))$, and let us write $u(t) = (p_1(t), \dots, p_k(t), 0 \dots, 0)$ and $\lambda(t) = (0, \dots, 0, u_{k+1}(t), \dots, u_n(t))$. Then $t \mapsto (\gamma(t), p(t))$ is a solution to the Hamiltonian system, i.e., it satisfies
$$\theta_i(\dot \gamma) = \frac{\partial H_{sR}}{\partial p_i}, \qquad \dot p_j = \{h_{X_i}, H_{sR}\}(p),$$
if and only if, $u = \theta(\dot \gamma)$, and
$$\dot u +Ê\dot \lambda =- \sum_{j,l=1}^k p_j p_l \Big(\Gamma_{1j}^l - \Gamma_{j1}^l, \dots, \Gamma_{nj}^l - \Gamma_{jn}^l\Big) = - a^\top(u, u + \lambda).$$
\end{proof}

\begin{corollary}
Definition~\ref{def:normalsRg} and the definition of normal geodesics given in Section~\ref{sec:sRfinite} coincide.
\end{corollary}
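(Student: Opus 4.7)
The corollary is, in essence, a direct rewording of the preceding Proposition, and the plan is to derive it exactly that way. Recall that the classical definition in Section~\ref{sec:sRfinite} labels $\gamma$ normal when it is the projection of an integral curve of the Hamiltonian vector field generated by $H_{sR}$, whereas Definition~\ref{def:normalsRg} asks for the existence of a dual curve $\lambda\in C^{\infty}(I,\calV_0)$ making $\gamma$ a solution of~\eqref{eq:Normalgeod}. The Proposition has just shown that these two conditions agree inside any neighborhood $U$ equipped with the local trivialization $\theta$ of Section~\ref{sec:NormalFD}, under the identification $\calV_0\leftrightarrow\{0\}\times\real^{n-k}$ that is built into the construction of $\theta$.

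First I would reduce the global claim to a local one. Both definitions are manifestly stable under restriction to open sub-intervals $I'\subset I$ and under restriction of the target to an open subset of $M$. Since $(M,\calH,\mathbf{h})$ is finite-dimensional and $\mathbf{g}$ tames $\mathbf{h}$, every point admits a neighborhood $U$ carrying an orthonormal frame $(X_1,\dots,X_n)$ whose first $k$ members span $\calH|_U$; shrinking $U$ further, the assumptions (A) and (B) of Section~\ref{sec:InfiniteMF} hold, as verified in Section~\ref{sec:NormalFD} via the diffeomorphism $\Theta$ and the explicit coordinate formula for $a^{\top}$. Cover $\gamma(I)$ by finitely many such neighborhoods and apply the Proposition on each piece to obtain the desired local equivalence.

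Next I would patch the local dual objects into global ones. On each piece the Proposition identifies the $\calV_0$-component of the cotangent lift $p$ with the last $n-k$ coordinates of $\lambda$, and both objects satisfy the same linear ODE along $\gamma$ with matching initial data; uniqueness of ODE solutions ensures that the local pieces assemble into a single $\lambda\in C^{\infty}(I,\calV_0)$, respectively into a single integral curve $t\mapsto(\gamma(t),p(t))$ in $T^{\ast}M$. Consequently the local equivalence upgrades to a global one.

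The main obstacle, though essentially bookkeeping, is confirming the identification between the coordinate splitting $\real^{n}=\calH_{0}\oplus\calV_{0}$ used in Theorem~\ref{theorem:main} and the classical momentum coordinates $(p_{1},\dots,p_{n})$. This is settled by the final computation in the proof of the Proposition, where $u=(p_{1},\dots,p_{k},0,\dots,0)$ and $\lambda=(0,\dots,0,p_{k+1},\dots,p_{n})$ are read off directly from $p$, so that the passage between Definition~\ref{def:normalsRg} and the Hamiltonian formulation is a mere change of notation.
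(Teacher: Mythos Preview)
Your proposal is correct and follows the same route the paper has in mind: the Corollary is stated without a separate proof because the paper regards it as an immediate consequence of the preceding Proposition together with the remark, made at the start of Section~\ref{sec:NormalFD}, that in finite dimensions normal geodesics are local minimizers, so nothing is lost by restricting to a small neighbourhood~$U$. Your reduction-to-local step and the observation that the Proposition already matches $u=(p_1,\dots,p_k,0,\dots,0)$ and $\lambda=(0,\dots,0,p_{k+1},\dots,p_n)$ are exactly this argument; the additional gluing paragraph via ODE uniqueness is more detail than the paper supplies, but it is sound and harmless.
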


In Section~\ref{sec:GroupCritical} we also show that all infinite-dimensional regular Lie groups with an invariant (either left or right) sub-Riemannian structure  also possess conditions (A) and (B). 

\subsection{Connectivity by horizontal curves} \label{sec:controllability}
Apart from the optimality conditions for horizontal curves, we also need to discuss a possibility to connect two arbitrary points by a smooth $\calH$-horizontal curve. This problem is often called {\it controllability} in the theory of geometric control. 

As we mentioned before, the Rashevski{\u\i}-Chow Theorem~\cite{Chow, Rashevsky} in finite dimensions states that if $\calH$ is bracket generating and a manifold $M$ is connected, then any pair of points can be connected by an absolutely continues horizontal curve. The statement remains true if we additionally require horizontal curves to be smooth. It is still an open question whether the bracket generating condition implies the existence of a smoothly immersed curve connecting two points. A generalization of the Rashevski{\u\i}-Chow theorem to infinite-dimensional manifolds is a challenging problem. The only result in this direction we are aware of~\cite{Ledyaev} asserts that if a horizontal distribution is bracket generating on a Hilbert manifold $M$, then the set reachable by horizontal curves with a fixed starting point is dense in $M$. See also~\cite{Dubnikov,Heintze} for some progress in Hilbert and Banach manifolds.

There is also a  statement where bracket generating plays a role in the problem of controllability for certain choices of horizontal distributions on diffeomorphism groups. Let $M$ be an $n$-dimensional compact manifold. Then the group $G = \Diff M$ of diffeomorphisms of $M$ is a Lie-Fr\'echet group under the group operation of superposition. The Lie algebra of $G$, can be identified with $\Vect M$, which is the space of all smooth vector fields on $M$. The identification can by made by associating an equivalence class of curves $[t \mapsto \gamma(t)] \in T_1G$ to the vector field
$$Xf(m) = \left. \frac{d}{dt} f(\gamma(t)) \right|_{t=0}, \quad \gamma(0)=m,\quad f \in C^\infty(M).$$
The Lie brackets are the negative to the usual commutator brackets of vector fields on $M$, see also~\cite{Milnor}. Let $\Diff_0 M$ denote the identity component of $\Diff M$. Then, it is possible to prove controllability on $\Diff_0 M$ with respect to an invariant horizontal sub-bundle by showing that the Lie sub-algebra is bracket generating on~$M$.

\begin{theorem}[\cite{AC}]\label{theorem:AC}
Let $M$ be a compact manifold and let $\Lieh$ be a subspace of $\Vect M$, which is also a $C^\infty(M)$-sub-module. Let $\calH$ be the horizontal distribution on $\Diff M$ obtained by left (or right) translation of $\Lieh$.
If $\Lieh$ is bracket generating on $M$, i.e., if $\Lie_m \Lieh = T_mM$ for any $m \in M$, then any two diffeomorphisms $\phi_1, \phi_2 \in \Diff M$ can be connected by an $\calH$-horizontal curve.
\end{theorem}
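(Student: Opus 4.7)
The plan is to reduce the statement to a controllability question on the base manifold $M$ via the $C^\infty(M)$-module hypothesis, then bootstrap to the group level using a standard fragmentation theorem in $\Diff_0 M$.

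First, I would translate $\calH$-horizontality into a concrete analytic statement. With $\calH$ the right-translate of $\Lieh$, a smooth curve $t \mapsto \phi_t$ in $\Diff M$ is horizontal if and only if the time-dependent vector field $X_t := \dot \phi_t \circ \phi_t^{-1}$ lies in $\Lieh$ for every $t$. Thus connecting $\phi_1$ to $\phi_2$ amounts to exhibiting $X_t \in \Lieh$ whose flow carries $\phi_1$ to $\phi_2$, and by right-translation this is equivalent to showing that $\phi_2 \circ \phi_1^{-1}$ lies in the set $\calR \subset \Diff M$ reachable from $\id$ by such controls. Since the concatenation of two reachable flows is reachable and time reversal $X_t \mapsto -X_{1-t}$ realizes the inverse, $\calR$ is automatically a path-connected subgroup of $\Diff_0 M$, and it suffices to show $\calR = \Diff_0 M$.

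Second, I would exploit the bracket-generating property at the level of $M$ itself. Since $\Lieh$ is a $C^\infty(M)$-module with $\Lie_m \Lieh = T_m M$ for every $m$, the finite-dimensional Rashevski\u{\i}-Chow theorem applies to the horizontal distribution $m \mapsto \{Y(m) : Y \in \Lieh\} \subset T_m M$ on $M$: any two points of $M$ can be joined by a concatenation of flows of elements of $\Lieh$, and for suitably chosen $Y_1,\dots,Y_N \in \Lieh$ whose iterated brackets span $T_m M$, the endpoint map
\[
(t_1,\dots,t_N) \longmapsto e^{t_N Y_N} \circ \cdots \circ e^{t_1 Y_1}(m)
\]
is a submersion at some parameter value. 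The decisive use of the $C^\infty(M)$-module hypothesis is that $\chi Y \in \Lieh$ for every cut-off $\chi \in C^\infty(M)$, which lets one localize every such motion inside an arbitrary open set $U\subset M$ without leaving $\Lieh$.

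Third, I would bridge from $M$ to $\Diff_0 M$ via fragmentation and local controllability. By the classical fragmentation theorem (Palais-Cerf-Thurston), any $\phi \in \Diff_0 M$ can be written as a finite composition $\phi = \phi_1 \circ \cdots \circ \phi_k$ where each $\phi_i$ is supported in a small chart $U_i$ and is isotopic to $\id$ through diffeomorphisms with support in $U_i$. Because $\calR$ is a subgroup, it is enough to prove the local statement: any $\phi$ supported in a small ball $U$ and isotopic to $\id$ through such diffeomorphisms lies in $\calR$. For this one combines the cut-off fields $\chi Y \in \Lieh$ with commutator identities of the form
\[
e^{-tY}\circ e^{-tX}\circ e^{tY}\circ e^{tX} = e^{\,t^2[X,Y]+O(t^3)\,},
\]
to realize, at least approximately, flows of arbitrary iterated brackets of elements of $\Lieh$, and thus every vector field supported in $U$. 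The main obstacle is passing from such approximate commutator identities to an honest equality $\phi = e^{X_N}\circ\cdots\circ e^{X_1}$ inside the Fr\'echet group $\Diff M$: a naive inverse function theorem fails because of loss of derivatives, and the rigorous route is either a Nash-Moser-type inverse function theorem applied to the endpoint map $X_{\bullet} \mapsto \phi_1$, or the explicit combinatorial construction of~\cite{AC}, which carefully tracks commutator time scales and so bypasses hard-analysis implicit function theorems. Once this local controllability is in place, the fragmentation decomposition together with the subgroup property of $\calR$ immediately forces $\calR = \Diff_0 M$, completing the proof.
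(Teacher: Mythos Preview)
The paper does not prove this theorem; it is quoted verbatim from~\cite{AC} as an external result, with no argument supplied. There is therefore no ``paper's own proof'' to compare your proposal against.

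Evaluating the proposal on its own merits: the architecture of the first two steps is sound. Translating $\calH$-horizontality into the condition $\dot\phi_t\circ\phi_t^{-1}\in\Lieh$, observing that the reachable set $\calR$ is a subgroup, and using the $C^\infty(M)$-module hypothesis to localize controls via cut-offs are all correct and are indeed the ingredients one needs. The reduction to showing $\calR=\Diff_0 M$ is the right target.

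The third step, however, contains a genuine gap. You correctly isolate the hard point---upgrading the approximate commutator identity $e^{-tY}e^{-tX}e^{tY}e^{tX}=e^{t^2[X,Y]+O(t^3)}$ to an exact statement that every compactly supported diffeomorphism isotopic to $\id$ lies in $\calR$---but you do not actually close it. You offer two routes: a Nash--Moser inverse function theorem, which you do not carry out (and which would require verifying tame estimates for the endpoint map that are far from automatic), or ``the explicit combinatorial construction of~\cite{AC}''. The second option is circular: the theorem you are asked to prove \emph{is} the main theorem of~\cite{AC}, so invoking that paper's construction at the decisive step is not a proof but a citation. As written, then, your argument is a correct reduction of the problem to its genuinely difficult core, followed by a deferral of that core back to the source.
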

In particular, if $\Lieh$ consists of all sections in a bracket generating sub-bundle $\calE$ of $TM$, then we have complete controllability with respect to $\calH$. Remark that Dusa McDuff communicated a similar statement to John Milnor  earlier in 1984, see~\cite[page 1018]{Milnor}.

\section{Infinite-dimensional Lie groups with constraints}\label{Lie group}

\subsection{Regular Lie groups}
Let $G$ be a Lie group modeled on a convenient vector space with the Lie algebra $\Lieg$. We use the symbol $\ell_a$ to denote the left multiplication by an element $a\in G$. Let us define the {\it left Maurer-Cartan form} $\kappa^\ell$ by  the formula
$$\kappa^{\ell}(v) = d \ell_{a^{-1}} v, \qquad v \in T_aG.$$
The Maurer-Cartan form is a $\Lieg$-valued one-form on $G$. 
For any smooth curve $\gamma\colon \real\to G$ we associate a smooth curve $u(t) = \kappa^\ell(\dot \gamma(t))$, $t\in \real$, in the Lie algebra $\Lieg$ which is called {\it the left logarithmic derivative} of $\gamma$.  All groups possessing the converse property, i.e., any curve $u \in C^\infty(\real, \Lieg)$ can be integrated to a smooth curve in $G$, have gained a special interest. More precisely, we have the following definition.

\begin{definition} \cite{KMGroup,Milnor} \label{def:regularLie}
A Lie group $G$ is called regular if
\begin{itemize}
\item[(a)] any smooth curve $u \in C^\infty(\real, \Lieg)$, is the left logarithmic derivative of some curve $\gamma\colon \real\to G$, starting at the identity $\mathbf1\in G$;
\item[(b)] the mapping
$$\begin{array}{ccc} C^\infty(\real, \Lieg) & \to & G \\ 
{[t \mapsto u(t)]} & \mapsto  & \gamma(1) 
\end{array}
$$
is smooth. Here $\gamma$ is a solution to the equation $\kappa^\ell(\dot \gamma(t)) = u(t)$, $t\in \real$ with the initial condition $\gamma(0) = \mathbf 1$.
\end{itemize}
\end{definition}
Throughout the paper, all mentioned Lie groups  are assumed to be regular. So far, there has been no known  examples of non-regular Lie groups. The term `regular' has been also used for somewhat stricter conditions, see~\cite{LieFre}. 

Let us notice the following properties of regular Lie groups.
\begin{itemize}
\item For any Lie group, not necessarily regular, a solution to the initial value problem
\begin{equation} \label{eq:solutionUn} \kappa^\ell(\dot \gamma(t)) = u(t), \qquad \gamma(0) = a,\end{equation}
 is unique. Hence the mapping in Definition \ref{def:regularLie} (b) is well defined. Clearly, (a) holds, if and only if, \eqref{eq:solutionUn} always has a solution, because we can use left multiplication by $a$ in order to let the solution to start from the identity.
\item Identifying elements $\Lieg$ with the constant curves in $C^\infty(\real,\Lieg)$, the smooth exponential map $\exp_G\colon \Lieg \to G$ in regular Lie groups is given by (b). However, the exponential map is not necessarily locally surjective, and it does not need to satisfy the Baker-Campbell-Hausdorff formula.
\item Regularity of a Lie group can be similarly defined in terms of the right logarithmic derivative. Let $r_a$ denote the right multiplication by $a$, and let $\kappa^r(v) = d r_{a^{-1}} v$, $v \in T_aG$ be the right Maurer-Cartan form.  Then for a given $\gamma\colon \real\to G$, the curve $u(t) = \kappa^r(\dot \gamma(t))$, $t\in \real$, is called the right logarithmic derivative. In this case regularity of the group implies uniqueness of the solution to the initial value problem $\kappa^r(\dot \gamma(t)) = u(t), \gamma(0) = \mathbf 1$. The property of a group to be regular does not depend on the choice between left or right logarithmic derivatives in the definition. 
\end{itemize}

\subsection{Sub-Riemannian geodesics on regular Lie groups} \label{sec:GroupCritical}

In this section we define the left-invariant sub-Riemannian structure on a regular Lie group and study the set of critical points of the energy functional defined by a sub-Riemannian metric.

Let $G$ be a regular Lie group with the Lie algebra $\Lieg$ on which an inner product  $\langle \cdot , \cdot \rangle$ is defined. Let $\mathbf g$ be a left-invariant metric on $G$ corresponding to the inner product:
$$\mathbf g(v_1,v_2) = \langle \kappa^\ell(v_1), \kappa^\ell(v_2) \rangle,\qquad  v_1,v_2\in TG.$$
Choose a $c^\infty$-closed subspace $\Lieh$ of $\Lieg$, such that there exists another $c^\infty$-closed subspace $\Liek$ satisfying $\Lieg = \Lieh \oplus \Liek.$ See, e.g., \cite{KrieglMichor} for the definition  of a $c^\infty$-topology. Then we define splitting sub-bundle $\calH$ of $TG$ by the left translations of $\Lieh$. Notice that $v \in \calH$, if and only if, $\kappa^\ell(v) \in \Lieh$. Denote by $\mathbf h$ the restriction of the metric $\mathbf g$ to the sub-bundle $\calH$. The pair $(\calH,\mathbf h)$ will be a left-invariant sub-Riemannian structure on the Lie group $G$. 

This structure fits well to the formalism of Section~\ref{sec:InfiniteMF} with $V = \Lieg$ and $\theta = \kappa^\ell$. Indeed, since
\begin{equation} \label{eq:leftMaurerCartan} 
d\kappa^\ell(v_1, v_2) = -\left[\kappa^\ell(v_1), \kappa^\ell(v_2)\right],
\end{equation}
the corresponding map $a^\top$ exists, if and only if,  the map $\ad_x\colon y \mapsto [x,y]$ has an adjoint for each $x \in \Lieg$. The existence of the adjoint map is non-trivial in infinite dimensions, and we have to assume it in order to let the condition (A) hold.

The assumption (B) holds for any regular Lie group. To show this we define a mapping
\begin{equation} \label{tauu} 
\tau_u: C^\infty(I, \Lieg) \to C^\infty(I, \Lieg), \qquad \tau_u(x) = \dot x + [u,x],
\end{equation}
for any $u \in C^\infty(I,\Lieg)$. Then we have the identity $\Xi_\gamma (Z) = \tau_{\kappa^\ell(\dot \gamma)} \big(\kappa^\ell(Z)\big)$ by~\eqref{eq:leftMaurerCartan}. The assumption (B) follows from  the following lemma.

\begin{lemma} \label{lemma:insert0}
For any $y \in C^\infty(I,\Lieg)$, there exists a unique solution to 
\begin{equation} \label{eq:inverttau} 
\tau_u x = y, \qquad x(0) = 0.
\end{equation}
\end{lemma}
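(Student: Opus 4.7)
The plan is to reduce the linear Cauchy problem $\dot x + [u,x] = y$, $x(0) = 0$, to an ordinary integration by constructing an integrating factor via the adjoint representation. Regularity of $G$ is the crucial tool.

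First, I would invoke Definition~\ref{def:regularLie} for the curve $u \in C^\infty(\real, \Lieg)$ to produce a unique smooth curve $\phi\colon \real \to G$ with $\phi(0) = \mathbf 1$ and $\kappa^\ell(\dot \phi(t)) = u(t)$. This gives a smooth family of linear isomorphisms $\Ad_{\phi(t)}\colon \Lieg \to \Lieg$. The key identity I would then establish is
$$\frac{d}{dt} \Ad_{\phi(t)} x(t) = \Ad_{\phi(t)}\bigl(\dot x(t) + [u(t), x(t)]\bigr) = \Ad_{\phi(t)}\, \tau_u(x)(t)$$
for every $x \in C^\infty(I,\Lieg)$. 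Formally this is just the infinite-dimensional analogue of $\tfrac{d}{dt}(\phi x \phi^{-1}) = \phi(\dot x + [u,x])\phi^{-1}$, obtained from $\phi^{-1}\dot\phi = u$; rigorously it follows from the fact that for regular Lie groups the map $\Ad\colon G \times \Lieg \to \Lieg$ is smooth and satisfies $\frac{d}{dt}\Ad_{\phi(t)} = \Ad_{\phi(t)}\circ \ad_{u(t)}$.

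With this identity in hand, the equation $\tau_u x = y$ becomes $\frac{d}{dt}\Ad_{\phi(t)} x(t) = \Ad_{\phi(t)} y(t)$, so the candidate solution is
$$x(t) = \Ad_{\phi(t)^{-1}} \int_0^t \Ad_{\phi(s)} y(s)\, ds.$$
Smoothness of $t \mapsto x(t)$ and the initial condition $x(0) = 0$ are immediate, and substituting back shows this indeed satisfies $\tau_u x = y$. For uniqueness, if $\tau_u x = 0$ and $x(0) = 0$, the same identity gives $\tfrac{d}{dt}\Ad_{\phi(t)} x(t) = 0$, whence $\Ad_{\phi(t)} x(t)$ is constant and equal to $\vec{0}$; invertibility of $\Ad_{\phi(t)}$ then forces $x \equiv 0$.

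The main obstacle is the infinitesimal formula for $\tfrac{d}{dt}\Ad_{\phi(t)}$: in the convenient setting one cannot manipulate products $\phi x \phi^{-1}$ as in matrix groups, and one must appeal to the smoothness of $\Ad$ on regular Lie groups together with the chain rule in the Kriegl--Michor framework. Once that single derivative is justified, the remainder is a routine solution of a linear first-order ODE in $\Lieg$.
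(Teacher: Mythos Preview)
Your proposal is correct and follows essentially the same route as the paper: both use regularity of $G$ to produce a curve $\phi$ (the paper calls it $\gamma$) with left logarithmic derivative $u$, then use $\Ad_{\phi(t)}$ as an integrating factor via the identity $\partial_t(\Ad_{\phi}x)=\Ad_{\phi}\tau_u x$ to reduce the problem to an ordinary integration, arriving at the same explicit formula $x(t)=\Ad_{\phi(t)^{-1}}\int_0^t \Ad_{\phi(s)}y(s)\,ds$. Your discussion of uniqueness and of the justification of the derivative of $\Ad_{\phi(t)}$ in the convenient setting is slightly more explicit than the paper's, but the argument is the same.
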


\begin{proof}
Let us construct the inverse map in order to prove this statement. Let $\gamma$ be a curve in $G$ with the left logarithmic derivative  $u$. Let $\Ad$ be the adjoint action of $G$ on $\mathfrak g$. Changing variables $x=\Ad_{\gamma^{-1}}w$, and using the formula $\partial_t\big(\Ad_{\gamma^{-1}}w\big)=\Ad_{\gamma^{-1}}\dot w-[u,\Ad_{\gamma^{-1}}w]$, we rewrite equation~\eqref{eq:inverttau} in the form $\dot w=\Ad_{\gamma}y$. Solving the latter equation with the initial data $w(0)=0$, we obtain $w(t)=\Ad_{\gamma(t)}x(t)=\int_0^{t} \Ad_{\gamma(\tilde t)} y(\tilde t) \, d\tilde t$. The integral always exists in the convenient vector space $C^\infty(I, \Lieg)$.
Thus, the inverse to the map~\eqref{tauu} with the domain restricted to the subspace of functions with $x(0) = 0$, is given by 
$$x(t)= \Ad_{\gamma(t)^{-1}} \int_0^t \Ad_{\gamma(\tilde t)} y(\tilde t) \, d\tilde t.$$
\end{proof}

Write $\ad^\top_x$ for the adjoint of $\ad_x$, that is, the map satisfying $\langle [x,y_1], y_2 \rangle = \langle y_1, \ad_x^\top y_2 \rangle.$ Then the results of Section \ref{sec:InfiniteMF} can be reformulated  for regular Lie groups.

\begin{theorem} \label{GeoGroup}
Let $G$ be a regular Lie group with the Lie algebra $\Lieg$. Assume that there is an inner product $\langle \cdot, \cdot \rangle$ on $\Lieg$, such that  the adjoint $\ad_x^\top$ is well defined for any $x \in \Lieg$. Let $\Lieg = \Lieh \oplus \Liek$ define a splitting of $\Lieg$ into two $c^\infty$-closed subspaces.

Define a sub-bundle $\calH$ by left translations of $\Lieh$, and a metric on $\calH$ by 
$$\mathbf h(v_1, v_2) = \langle \kappa^\ell(v_1), \kappa^\ell(v_2) \rangle \qquad v_1, v_2 \in \calH_m.$$
Then a sub-Riemannian geodesic $\gamma$ is either semi-rigid or a normal. In the latter case the curve $\gamma$ is a solution to
\begin{equation} \label{eq:NGGroup} u = \kappa(\dot \gamma), \qquad \dot u = \pr_{\Lieh} \ad_u^\top(u + \lambda), \qquad \dot \lambda = \pr_{\Liek} \ad^\top_u(u+\lambda).\end{equation}
All solutions to \eqref{eq:NGGroup} are sub-Riemannian geodesics.
\end{theorem}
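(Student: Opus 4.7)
The plan is to recognize this statement as a direct specialization of Theorem~\ref{theorem:main} to the left-invariant setting, with $V=\Lieg$, $\theta=\kappa^\ell$, $\calH_0=\Lieh$, and $\calV_0=\Liek$. The weak Riemannian metric $\mathbf g$ obtained by left translation of $\langle \cdot,\cdot\rangle$ tames $\mathbf h$ because the splitting $\Lieg=\Lieh\oplus\Liek$ left-translates to a splitting $TG=\calH\oplus\calH^\perp$ after possibly replacing $\Liek$ by the $\langle\cdot,\cdot\rangle$-orthogonal complement of $\Lieh$ (the statement of the theorem only uses the $c^\infty$-splitting, and the conclusion is invariant under this adjustment because the equations~\eqref{eq:NGGroup} are written in terms of $\Lieh$ and $\ad^\top$ directly). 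With these identifications, I need only verify the two analytic assumptions (A) and (B) that feed Theorem~\ref{theorem:main}, and then rewrite the resulting equations in terms of $\ad^\top$.

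First I will check assumption (A). Using the Maurer-Cartan equation~\eqref{eq:leftMaurerCartan} and the hypothesis that $\ad_x^\top$ exists for every $x\in\Lieg$, I compute, for $v_1,v_2\in T_aG$ and $u\in\Lieg$,
\[
\langle d\kappa^\ell(v_1,v_2),u\rangle
= -\langle [\kappa^\ell(v_1),\kappa^\ell(v_2)],u\rangle
= -\langle \kappa^\ell(v_2), \ad_{\kappa^\ell(v_1)}^\top u\rangle,
\]
so (A) holds with the explicit bilinear map $a^\top(x,u)=-\ad_x^\top u$. Assumption (B) is precisely the content of Lemma~\ref{lemma:insert0}: under the identification $\Xi_\gamma(Z)=\tau_{\kappa^\ell(\dot\gamma)}(\kappa^\ell(Z))$ furnished by~\eqref{eq:leftMaurerCartan}, the Cauchy problem $\Xi_\gamma(X)=y$, $X(0)=\vec 0_{\gamma(0)}$, is uniquely solvable in $C^\infty(I,\Lieg)$ via the explicit formula involving $\Ad_\gamma$ established there. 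This step depends crucially on the regularity of $G$, which is what guarantees the existence of the curve $\gamma$ and the smoothness of the flow $\Ad_\gamma$.

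With (A) and (B) in place, Theorem~\ref{theorem:main} applies: every sub-Riemannian geodesic is either semi-rigid or satisfies the system
\[
\theta(\dot\gamma)=u,\qquad
\dot u=-\pr_{\calH_0}a^\top(u,u+\lambda),\qquad
\dot\lambda=-\pr_{\calV_0}a^\top(u,u+\lambda),
\]
for some $\lambda\in C^\infty(I,\calV_0)$. Substituting $\theta=\kappa^\ell$, $\calH_0=\Lieh$, $\calV_0=\Liek$, and $a^\top(x,u)=-\ad_x^\top u$, the two sign cancellations yield exactly~\eqref{eq:NGGroup}. The converse direction (any solution of~\eqref{eq:NGGroup} is a sub-Riemannian geodesic) is the converse half of Theorem~\ref{theorem:main} transported through the same substitution.

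The only place where genuine work is needed, as opposed to bookkeeping, is the verification of (A) and (B); the existence of $\ad^\top$ is assumed in the statement, and the surjectivity/invertibility of $\Xi_\gamma$ on curves vanishing at $0$ is the main technical point, already handled by Lemma~\ref{lemma:insert0} using regularity of $G$. I therefore expect the proof to be short, its content being the Maurer-Cartan identity together with an appeal to the two earlier results.
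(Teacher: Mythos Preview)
Your proposal is correct and follows essentially the same route as the paper: verify assumption (A) from the Maurer--Cartan identity $d\kappa^\ell(v_1,v_2)=-[\kappa^\ell(v_1),\kappa^\ell(v_2)]$ together with the assumed existence of $\ad_x^\top$, obtain (B) from Lemma~\ref{lemma:insert0} via regularity of $G$, and then apply Theorem~\ref{theorem:main} with the substitution $a^\top(x,y)=-\ad_x^\top y$. The paper's own proof is in fact the two-line version of exactly this argument; your only addition is the (slightly tangential) remark about replacing $\Liek$ by $\Lieh^\perp$, which the paper simply leaves implicit.
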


\begin{proof}
Conditions (A) and (B) are satisfied by the hypothesis of the theorem. The rest is the consequence of Theorem~\ref{theorem:main} and the fact that $a^\top(x,y) = - \ad^\top_xy$.
\end{proof}

\begin{remark}
 For the special case $\Lieh = \Lieg$, the equation \eqref{eq:NGGroup} becomes the left Euler-Poincar\'e-Arnold equation on $G$, see~\cite{Arnold}.

The same arguments can be used for a right-invariant sub-Riemannian structure. The relations $d\kappa^r(v_1, v_2) = [\kappa^r(v_1), \kappa^r(v_2)]$ lead to $a^\top(x,y) = \ad_x^\top(y)$ in this case.
\end{remark}

\subsection{Semi-rigid curves and regular Lie groups}

As it was mentioned before, a disadvantage of the definition of a semi-rigid curve is that it is hard to prove the existence of such curves and to find explicit formulas for them. However, for a regular Lie group with a left- (or right-) invariant distribution one can restrict the search to curves in the Lie algebra. We consider only the left-invariant case, because the right-invariant case is analogous.

Let $G$ be a Lie group with the Lie algebra $\Lieg$, and let $\Lieg = \Lieh \oplus \Liek$ be a splitting into closed subspaces. Define $\calH$ as a left-invariant distribution corresponding to $\Lieh$. Let $\gamma$ be a curve with the left logarithmic derivative $u$. Then a vector field $Z$ along $\gamma$ is in $\Vect_{\calH}(\gamma)$, if and only if, $z = \kappa^\ell(Z)$ satisfies the equation $\pr_{\Liek} \tau_u z = 0,$ where $\tau_u$ is as in~\eqref{tauu}. Hence, a curve $\gamma$ is semi-rigid if there is some curve $z \in C^\infty(I, \Lieg)$ satisfying $z(0) = 0, z(1) = 0$ and $\pr_{\Liek} \tau_u z =0$, and which does not come from any variation. The following lemma, which is a slight reformulation of a result in~\cite{Milnor}, permit us to describe the above property in terms of curves in the Lie algebra only.

\begin{lemma} \cite[Lemma 8.8]{Milnor}  \label{lemma:Milnor} 
Let $G$ be a regular Lie group with the Lie algebra $\Lieg$.
Let us consider two elements in $C^\infty(I \times (-\epsilon, \epsilon), \Lieg)$ given by
$$(t,s)  \mapsto  u^s(t) \qquad\text{and} \qquad (t,s) \mapsto z^s(t).$$
Then a solution $\gamma^s(t)\colon I \times (-\epsilon, \epsilon)\to G$ to the system  of differential equations 
$$u^s(t) = \kappa^{\ell}( \dot \gamma^s(t)), \qquad z^s(t) = \kappa^{\ell}( \partial_s \gamma^s(t)),$$
exists, if and only if, the functions $u$ and $z$ satisfy the condition
\begin{equation}\label{integrability}
\partial_s u^s(t) = \tau_{(u^s(t))} z^s(t)=\dot z^s(t)+[u^s(t),z^s(t)],\quad t\in I,\ \ s\in(-\epsilon, \epsilon).\end{equation}
\end{lemma}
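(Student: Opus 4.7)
The plan is to prove this via the standard Maurer-Cartan integrability argument, using formula~\eqref{eq:leftMaurerCartan} together with the regularity of $G$.

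For necessity, assuming $\gamma^s(t)$ solves the system, I would pull back the $\Lieg$-valued one-form $\kappa^\ell$ along the map $(t,s) \mapsto \gamma^s(t)$ to the rectangle $I \times (-\epsilon, \epsilon)$. By hypothesis, this pullback evaluates to $u^s$ on $\partial_t$ and to $z^s$ on $\partial_s$. I would compute its exterior derivative on $(\partial_t, \partial_s)$ in two ways: once via the standard formula, which, since $[\partial_t, \partial_s] = 0$, gives $\partial_t z^s - \partial_s u^s$; and once by pulling back~\eqref{eq:leftMaurerCartan}, which gives $-[u^s, z^s]$. Equating these produces $\partial_s u^s = \dot z^s + [u^s, z^s] = \tau_{u^s} z^s$.

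For sufficiency, given $u^s$ and $z^s$ obeying the integrability condition, my plan is to build $\gamma^s(t)$ in two stages. First, using Definition~\ref{def:regularLie}(a) applied to the curve $s \mapsto z^s(0)$, I would integrate from an arbitrary basepoint (say $\mathbf 1$) to obtain a smooth curve $s \mapsto \gamma^s(0)$. Then, for each fixed $s$, regularity again lets me integrate $\kappa^\ell(\dot\gamma^s) = u^s$ starting at $\gamma^s(0)$ to produce $\gamma^s(t)$. Joint smoothness in $(t,s)$ follows from the smoothness clause in Definition~\ref{def:regularLie}(b) combined with the smoothness of left multiplication.

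It then remains to verify that $\tilde z^s(t) := \kappa^\ell(\partial_s \gamma^s(t))$ coincides with the prescribed $z^s(t)$. By the basepoint construction $\tilde z^s(0) = z^s(0)$, and the necessity computation above, applied to the constructed $\gamma^s$ with $\tilde z^s$ in place of $z^s$, gives $\partial_s u^s = \partial_t \tilde z^s + [u^s, \tilde z^s]$. Subtracting this from the integrability hypothesis shows that $w := \tilde z^s - z^s$ satisfies $\tau_{u^s}(w) = 0$ with $w(0) = 0$, and Lemma~\ref{lemma:insert0} (applied with $y = 0$) forces $w \equiv 0$. The conceptual content is the classical Maurer-Cartan trick and offers no real obstacle; the only genuine subtlety is the bookkeeping of joint smoothness in $(t, s)$ within the convenient calculus framework, which is precisely what the smoothness clause in Definition~\ref{def:regularLie}(b) is designed to accommodate.
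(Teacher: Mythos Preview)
The paper does not supply its own proof of this lemma; it is quoted verbatim as \cite[Lemma~8.8]{Milnor} and used as a black box. Your argument is correct and is essentially the classical Maurer--Cartan integrability proof that appears in Milnor's survey: necessity is the structural equation~\eqref{eq:leftMaurerCartan} pulled back to the rectangle, and sufficiency is the two-step integration (first along $s$ at $t=0$, then along $t$ for each $s$) followed by the uniqueness argument, which you correctly reduce to Lemma~\ref{lemma:insert0}. The only place where genuine care is needed---and you flag it yourself---is the joint smoothness of $(t,s)\mapsto\gamma^s(t)$ in the convenient setting; this is indeed handled by the smooth dependence on parameters implicit in Definition~\ref{def:regularLie}(b), though a fully rigorous treatment requires the evolution-operator machinery of \cite{KMGroup} or \cite{KrieglMichor} rather than just the endpoint map.
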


\begin{proposition}
Let  us consider an $\calH$-horizontal curve $\gamma\colon I\to G$ with a left logarithmic derivative $u\colon I\to\Lieg$. The curve $\gamma$ is  semi-rigid if and only if there is a curve $z \in C^\infty(I,\Lieg)$ with
\begin{equation} \label{eq:zHvar} z(0) = 0, \quad z(1) = 0, \quad \pr_{\Liek} \tau_u z = 0,\end{equation}
such that the problem
\begin{equation} \label{eq:Problem}
\begin{cases}
\begin{array}{ll}
\partial_s u^s = \tau_{(u^s)} z^s, 
\\
u^s(t) \in \Lieh, &\text{for}\quad (t,s)\in I \times (-\epsilon, \epsilon)
\\
z^s(t) \in \Lieg, &\text{for}\quad (t,s)\in I \times (-\epsilon, \epsilon)
\\
u^0(t) = u(t), \quad z^0(t) = z(t), \qquad & \text{for}\quad t\in I
\\
z^s(0) = z^s(1) = 0, &\text{for}\quad s\in (-\epsilon, \epsilon)
\end{array}
\end{cases} 
\end{equation}
has no solution.
\end{proposition}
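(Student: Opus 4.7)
The plan is to transport every object appearing in the definition of semi-rigidity from $\Vect(\gamma)$ to $C^\infty(I,\Lieg)$ via the left trivialization $Z \mapsto z := \kappa^\ell(Z)$, and then to invoke Lemma~\ref{lemma:Milnor} to convert the existence of a horizontal variation into the solvability of system~\eqref{eq:Problem}.

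First I would observe that $\kappa^\ell$ is fiberwise a linear isomorphism, so $Z \mapsto z$ is a linear bijection $\Vect(\gamma) \to C^\infty(I,\Lieg)$ sending the endpoint conditions $Z(0) = \vec 0_{\gamma(0)}$, $Z(1) = \vec 0_{\gamma(1)}$ to $z(0) = z(1) = 0$. The paragraph preceding the proposition already identifies $\Vect_{\calH}(\gamma)$ with the set of $z \in C^\infty(I,\Lieg)$ satisfying $\pr_{\Liek} \tau_u z = 0$, so $Z \in \Vect_{\calH}^{fix}(\gamma)$ corresponds precisely to the three conditions of~\eqref{eq:zHvar}.

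Next I would show that $Z \in \Var_{\calH}(\gamma)$ is equivalent to the solvability of~\eqref{eq:Problem} with $u^0 = u$ and $z^0 = z$. In one direction, starting from $\gamma^s \in \calJ_{\calH}(\gamma)$ with $Z = \partial_s \gamma^s |_{s=0}$, one sets $u^s(t) = \kappa^\ell(\dot \gamma^s(t))$ and $z^s(t) = \kappa^\ell(\partial_s \gamma^s(t))$; horizontality of each $\gamma^s$ forces $u^s$ to take values in $\Lieh$, the fixed endpoints make $z^s$ vanish at $0$ and $1$, and the integrability identity $\partial_s u^s = \tau_{u^s} z^s$ of Lemma~\ref{lemma:Milnor} is exactly equality of mixed partials applied to $\gamma^s$. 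Conversely, given a solution $(u^s, z^s)$ of~\eqref{eq:Problem}, Lemma~\ref{lemma:Milnor} produces a smooth $\gamma^s \colon I \times (-\epsilon,\epsilon) \to G$ with the prescribed logarithmic derivatives once the initial value at a single point is fixed; choosing $\gamma^0(0) = \gamma(0)$, uniqueness of~\eqref{eq:solutionUn} forces $\gamma^0 = \gamma$, while $u^s \in \Lieh$ together with $z^s(0) = z^s(1) = 0$ make each $\gamma^s$ horizontal and fix its endpoints, so $\gamma^s \in \calJ_{\calH}(\gamma)$ and its $s$-derivative at $0$ corresponds to $z$.

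Combining the two steps, the relation $\Var_{\calH}(\gamma) \subsetneq \Vect_{\calH}^{fix}(\gamma)$ becomes the assertion that some $z$ satisfying~\eqref{eq:zHvar} cannot be realized as the $z^0$ of a solution of~\eqref{eq:Problem}, which is the equivalence claimed. The main obstacle I anticipate is the careful book-keeping in the reverse implication: one must verify that the boundary condition $z^s(1) = 0$ for all $s$ propagates the identity $\gamma^0(1) = \gamma(1)$ to $\gamma^s(1) = \gamma(1)$ for all $s$. This amounts to reading $z^s(1) = \kappa^\ell(\partial_s \gamma^s(1)) = 0$ as saying that the curve $s \mapsto \gamma^s(1)$ has vanishing derivative and then integrating in $s$; the argument is elementary but relies on regularity of $G$ and on smoothness in the convenient sense, rather than on finite-dimensional ODE techniques.
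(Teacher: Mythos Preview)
Your proposal is correct and follows essentially the same route as the paper: both directions rely on Lemma~\ref{lemma:Milnor} to translate between horizontal variations $\gamma^s$ and pairs $(u^s,z^s)$ satisfying the integrability relation, with the same choice of normalization $\gamma^0(0)=\gamma(0)$ and the same use of uniqueness from~\eqref{eq:solutionUn}. The endpoint verification you flag as an obstacle is handled in the paper by the one-line observation that $z^s(0)=z^s(1)=0$ forces $\gamma^s(0)$ and $\gamma^s(1)$ to be constant in $s$, exactly as you outline.
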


\begin{proof}
In order to prove the necessary and sufficient parts, we actually show that the curve $\gamma$ is not semi-rigid if and only if there always exists a solution to~\eqref{eq:Problem}.

Observe, that if $z$ satisfies~\eqref{eq:zHvar} and $\gamma$ is not semi-rigid, then there is an element $\gamma^s(t)\in\mathcal J_{\calH}(\gamma)$, satisfying $\kappa( \partial_s \gamma^s(t)) = z(t).$ If we  define the functions $u^s(t) = \kappa^\ell(\dot \gamma^s(t))$ and $z^s(t) = \kappa^\ell(\partial_s \gamma^s(t))$, then it is easy to see from Lemma~\ref{lemma:Milnor} that the pair $(u^s(t), z^s(t))$ satisfies all conditions of~\eqref{eq:Problem}.

Conversely, assume that for an arbitrary $z$ satisfying~\eqref{eq:zHvar}, there is a solution  $(t,s) \mapsto (u^s(t), z^s(t))$ to~\eqref{eq:Problem}. Let $\gamma^s(t)$ be a solution to 
$$u^s(t) = \kappa^\ell(\dot \gamma^s(t)), \qquad z^s(t) = \kappa^\ell(\partial_s \gamma^s(t)),
$$ which exists by Lemma~\ref{lemma:Milnor}. We choose the solution satisfying $\gamma^0(0) = \gamma(0)$, which is unique. 
Then,
\begin{itemize}
\item The condition $u^0 = u$ implies $\gamma^0(t) = \gamma(t)$;
\item $u^s(t) \in \Lieh$ yields that $\gamma^s(t)$ is $\calH$-horizontal;
\item $z^s(0) = z^s(1) = 0$ ensures that $\gamma^s(0)=\gamma(0)$ and $\gamma^s(1)=\gamma(1)$.
\end{itemize}
We conclude that $\gamma^s(t)\in\mathcal J_{\calH}(\gamma)$. Since $z$ was arbitrary, we conclude that $\gamma$ is not semi-rigid.
\end{proof}

Now we formulate the following statement about a possible existence of variations. 
\begin{proposition}
Let $\gamma\colon I\to G$ be an $\calH$-horizontal curve with a left logarithmic derivative~$u$. For any $(u,z)$ satisfying~\eqref{eq:zHvar}, there exists a pair $(u^s(t), z^s(t))$ satisfying all requirements of~\eqref{eq:Problem} except for the equality $z^s(1)=0$ for all values of $s$. 
\end{proposition}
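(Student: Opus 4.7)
The plan is to prescribe the $s$-evolution of $u^s$ explicitly and then recover $z^s$ by inverting the operator $\tau_{u^s}$ via Lemma~\ref{lemma:insert0}. The hypothesis $\pr_{\Liek}\tau_u z = 0$ will be used precisely to keep $u^s$ horizontal.

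First, define
$$u^s(t) := u(t) + s\,\tau_u z(t) = u(t) + s\bigl(\dot z(t) + [u(t),z(t)]\bigr), \qquad (t,s)\in I\times(-\epsilon,\epsilon).$$
Since $u(t)\in\Lieh$ and, by assumption on $z$, the curve $\tau_u z$ lies in $\Lieh$, it follows that $u^s(t)\in\Lieh$ for all $(t,s)$, and $\partial_s u^s(t) = \tau_u z(t)$ is independent of $s$.

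Second, for each fixed $s$, apply Lemma~\ref{lemma:insert0} with $u$ replaced by $u^s$ and $y = \tau_u z$ to define $z^s \in C^\infty(I,\Lieg)$ as the unique solution of
$$\tau_{u^s} z^s = \tau_u z, \qquad z^s(0) = 0.$$
At $s=0$ this equation reads $\tau_u z^0 = \tau_u z$ with $z^0(0) = 0 = z(0)$, so by uniqueness $z^0 = z$. By construction, $\partial_s u^s = \tau_u z = \tau_{u^s} z^s$, which is the main PDE in \eqref{eq:Problem}; all the remaining conditions of \eqref{eq:Problem} except possibly $z^s(1)=0$ are then immediate from the definitions.

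Third, I need joint smoothness of $(t,s)\mapsto z^s(t)$. For this I use the explicit inverse obtained in the proof of Lemma~\ref{lemma:insert0}: letting $\gamma^s\colon I \to G$ denote the unique curve with $\kappa^\ell(\dot\gamma^s) = u^s$ and $\gamma^s(0)=\mathbf 1$, one has
$$z^s(t) = \Ad_{\gamma^s(t)^{-1}} \int_0^t \Ad_{\gamma^s(\tilde t)}\,(\tau_u z)(\tilde t)\,d\tilde t.$$
Since $G$ is regular, the assignment $u^s\mapsto \gamma^s$ is smooth in $s$ (Definition~\ref{def:regularLie}(b)), and smoothness of the integrand together with smoothness of $\Ad$ yields smoothness of $z^s$ in $(t,s)$, as required.

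The main subtlety, and really the only non-mechanical step, is the smoothness claim in the third paragraph: one must check that the convenient-calculus machinery propagates smoothness through the solution map $u^s\mapsto \gamma^s$ and through the $\Ad$-action, which is exactly what regularity of $G$ provides. The condition $z^s(1)=0$ is the only one that is not enforced, because the upper limit of the integral is $1$ and the integral need not vanish there for $s\neq 0$, even though $z^0(1) = z(1) = 0$ by hypothesis.
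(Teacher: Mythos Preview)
Your proof is correct and follows essentially the same route as the paper: both take the linear deformation $u^s = u + s\,\tau_u z$ and then recover $z^s$ from it. The only cosmetic difference is that the paper first integrates $u^s$ to a curve $\gamma^s$ in $G$ (via regularity) and sets $z^s = \kappa^\ell(\partial_s\gamma^s)$, whereas you first solve $\tau_{u^s} z^s = \tau_u z$ with $z^s(0)=0$ via Lemma~\ref{lemma:insert0} and only afterwards invoke $\gamma^s$ for joint smoothness; by uniqueness in Lemma~\ref{lemma:insert0} these two constructions of $z^s$ coincide.
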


\begin{proof}
In order to construct a pair $(u^s,z^s)$ we let $\tau_u z = v \in C^\infty(I,\Lieh)$. Define $u^s(t) = u(t) + s v(t)$ and find a curve $\gamma^s(t)$ that solves the initial value problem 
$$\kappa^\ell(\dot \gamma^s) = u^s, \qquad \gamma^s(0) = \gamma(0).$$
The solution exists because the Lie group is regular. Then we define $z^s(t) = \kappa^\ell(\partial_s \gamma^s(t))$ which  satisfies $z^s(0) = 0$, and find the curve $v(t)=\tau_{(u^s)} z^s(t)$ which belongs to $\Lieh$.
\end{proof}
In the case $\Lieh = \Lieg$, it is known that any $z \in C^\infty(I, \Lieg)$ with $z(0) = z(1) = 0$ comes from a variation given by $\gamma^s(t) = \gamma(t) \cdot \exp_G(sz(t))$.

\subsection{Geodesics for a metric invariant under a Lie subgroup} 
Up to now, the unique requirement for the subspace $\Liek$ is to form a complement to the subspace $\Lieh$. In this section, we assume that $\Lieh$ admits a complement $\Liek$, which is the Lie algebra of a connected subgroup $K$ of $G$. If $\Liek$ is finite-dimensional, then this holds if and only if $\Liek$ is a sub-algebra, but if $\Liek$ is infinite-dimensional, this is only a necessary condition.

We  study sub-Riemannian structures, which are invariant under the action of a subgroup $K$. Let us start with the left-invariant distribution $\calH$. Assume that this distribution is also right-invariant with respect to elements from $K$. This may hold if and only if $\Lieh$ is invariant under the adjoint action by $K$. We remark that if $K$ is finite-dimensional, then this holds if and only if $\Lieh$ is invariant under the adjoint action of $\Liek$, i.e., if $[\Liek, \Lieh] \subseteq \Lieh$. However, $\ad(\Liek)$-invariance of $\Lieh$ does not imply $\Ad(K)$-invariance in the case of an infinite-dimensional subgroup $K$. Under the assumption of $\Ad(K)$-invariance of $\Lieh$, the question of controllability in $G$  reduces to proving that one can reach any element in $K$ by an $\calH$-horizontal curve. This becomes a particular advantage when $K$ is a much smaller group than~$G$.

\begin{proposition} \label{groupcontrollability}
Let $G$ be a Lie group with the Lie algebra $\Lieg$, and let a left- (or right-) invariant horizontal sub-bundle $\calH$ be obtained by left (or right) translations of a subspace $\Lieh \subseteq \Lieg$. Assume that there is a sub-group $K$ of $G$ with the Lie algebra $\Liek$, such that $\Lieg = \Liep \oplus \Liek$ for some closed $\Liep \subseteq \Lieh$. Suppose also that $\Lieh$ is $\Ad(K)$-invariant.

Then any pair of elements in $G$ can be connected by a smooth $\calH$-horizontal curve, if and only if, for every $a \in K$ there is an $\calH$-horizontal smooth curve connecting $\mathbf 1$ and $a$.
\end{proposition}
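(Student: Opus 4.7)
The ``only if'' direction is immediate: it is the restriction of global $\calH$-connectivity to targets in $K$. For the converse, my plan is to use left-invariance of $\calH$ to reduce the problem to producing, for each $g\in G$, a smooth $\calH$-horizontal curve from $\mathbf 1$ to $g$; a horizontal curve from $g_1$ to $g_2$ is then obtained by left-translating a horizontal curve from $\mathbf 1$ to $g_1^{-1}g_2$ by $g_1$.

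Given such a $g$, I would pick any smooth $\eta\colon I\to G$ with $\eta(0)=\mathbf 1$, $\eta(1)=g$, set $w(t)=\kappa^\ell(\dot\eta(t))$, and correct $\eta$ by writing $\gamma(t)=\eta(t)k(t)$ with $k\colon I\to K$, $k(0)=\mathbf 1$, to be determined. A direct computation using the product rule gives
$$\kappa^\ell(\dot\gamma)=\Ad_{k^{-1}}w+\kappa^\ell(\dot k).$$
Both $\Lieh$ and $\Liek$ are $\Ad(K)$-invariant --- the former by hypothesis, the latter automatically --- so decomposing along $\Lieg=\Liep\oplus\Liek$ shows that it suffices to arrange for the $\Liek$-component of $\kappa^\ell(\dot\gamma)$ to vanish identically. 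This reduces horizontality of $\gamma$ to the initial value problem
$$\kappa^\ell(\dot k(t))=-\pr_{\Liek}\bigl(\Ad_{k(t)^{-1}}w(t)\bigr),\qquad k(0)=\mathbf 1, \qquad (\ast)$$
for a curve $k$ in $K$.

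Granting a smooth solution of $(\ast)$ on all of $I$, I would set $k_1:=k(1)\in K$ and obtain a smooth horizontal $\gamma$ from $\mathbf 1$ to $g\cdot k_1$. The hypothesis then furnishes a smooth horizontal $\mu\colon I\to G$ from $\mathbf 1$ to $k_1^{-1}$, and left-invariance of $\calH$ makes $\tilde\mu(t):=gk_1\cdot\mu(t)$ horizontal from $gk_1$ to $g$. Concatenating $\gamma$ and $\tilde\mu$, after reparametrising each half by a smooth bijection of the respective interval whose derivatives vanish to all orders at the endpoints, produces the required smooth horizontal curve from $\mathbf 1$ to $g$.

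The hard part will be the smooth solvability of $(\ast)$ on the whole of $I$. The equation is nonlinear in the unknown through $\Ad_{k^{-1}}$, so Definition~\ref{def:regularLie}, which only integrates prescribed curves $u\in C^\infty(\real,\Lieg)$, does not apply verbatim. I would address this by recognising $(\ast)$ as the horizontal-lift equation for the Ehresmann connection on the right principal $K$-bundle $G\to G/K$ associated with the complement $\Liep$, and arguing smooth solvability via regularity of $K$ as a Lie group in its own right --- a property inherited from that of $G$ in view of the closedness of the splitting $\Lieg=\Liep\oplus\Liek$.
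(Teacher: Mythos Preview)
Your overall strategy --- correct an arbitrary curve by right-multiplying with a curve in $K$, then patch the endpoint using the hypothesis --- matches the paper's. The gap is in the ODE $(\ast)$. You correctly identify that it is nonlinear in $k$ and that regularity does not apply directly, but your proposed fix (horizontal lifts for an Ehresmann connection, regularity of $K$) is not a proof: in the convenient setting, existence of global smooth horizontal lifts is itself a nontrivial statement that would need its own argument, and you have not supplied one.

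The point you are missing is that $(\ast)$ is stronger than necessary. You are forcing $\kappa^\ell(\dot\gamma)\in\Liep$, whereas horizontality only requires $\kappa^\ell(\dot\gamma)\in\Lieh$. The paper exploits this slack: instead of solving $(\ast)$, it takes $k(t)=\vartheta(t)^{-1}$ where $\vartheta$ is the curve in $K$ with left logarithmic derivative equal to the \emph{known} function $t\mapsto\pr_{\Liek}w(t)$. This $\vartheta$ exists immediately by regularity of $G$ (Definition~\ref{def:regularLie}). With that choice your own formula gives
\[
\kappa^\ell(\dot\gamma)=\Ad_{\vartheta}\,w-\Ad_{\vartheta}\pr_{\Liek}w=\Ad_{\vartheta}\bigl(\pr_{\Liep}w\bigr),
\]
which lies in $\Lieh$ by the $\Ad(K)$-invariance hypothesis --- not in $\Liep$, but that was never required. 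In short: project to $\Liek$ \emph{before} conjugating, not after; the equation then becomes explicit and the nonlinear obstacle disappears. The rest of your concatenation argument is fine.
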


\begin{proof}
We present the proof only for the case of a left-invariant sub-bundle $\calH$. Let $c\colon[0,1]\to G$ be any curve (not necessarily horizontal), connecting points $a_0$ and $a_1$, and having left logarithmic derivative $u$. Using the left translation of $c$ by $a_0^{-1}$, we can assume that $a_0 = \mathbf 1$. 

Let $\pr_{\Liek}\colon \Lieg \to \Liek$ be a projection with the kernel $\Liep \subseteq \Lieh$. Define $k(t) = \pr_{\Liek} u(t)$, and let $\vartheta$ be a curve in $K$ with a left logarithmic derivative $k$, starting at $\mathbf 1$. Then the left logarithmic derivative of the curve $\vartheta(t)^{-1}$ is $-\Ad_{\vartheta} k$. 

Let us show that the curve $\gamma_1(t)= c(t) \cdot \vartheta(t)^{-1}$ is $\calH$-horizontal. We calculate the left logarithmic derivative by
$$\kappa^\ell( \partial_t ( c(t)  \cdot \vartheta(t)^{-1})) = \Ad_{\vartheta(t)}(u(t) - k(t)) \in \Lieh.$$
Hence, we have constructed a horizontal curve $\gamma_1$, from ${\mathbf 1}$ to $a_1 \cdot \vartheta(1)^{-1}$. Applying the right translation by $\vartheta(1)$, we get a curve from $\vartheta(1)$ to $a_1$.  It keeps the curve horizontal because of the $\Ad(K)$-invariance of $\Lieh$. Moreover, by the hypothesis of the theorem, we can connect ${\mathbf 1}$ with $\vartheta(1)$ by a smooth horizontal curve $\gamma_2$. Finally, we glue the curves $\gamma_1$ and $\gamma_2$ into one smooth curve by slowing down the speed at the connecting point to zero.
\end{proof}

We continue considering the properties of the sub-Riemannian metrics coming from an invariant inner product. Let $\langle \cdot , \cdot \rangle$ be an inner product on $\Lieg$, such that $\ad_x^\top$ is well defined, and such that $\Lieh$ is orthogonal to $\Liek$. As we have done before, let $\mathbf g$ be a Riemannian metric obtained by a left translation of $\langle \cdot , \cdot \rangle$, and let us use $\mathbf h$ for its restriction to $\calH$. Then we have the following result regarding the invariance of the inner product under either $\Liek$ or $K$.

\begin{theorem} \label{TheoremLeft}
The following statements hold.
\begin{itemize}
\item[(a)] If $\langle \ , \, \rangle$ is $\ad(\Liek)$-invariant and if $\gamma_R\colon I \to G$ is a Riemannian geodesic with respect to $\mathbf g$, then $\lambda = \pr_{\Liek} \kappa^\ell(\dot \gamma_R),$
is constant. Here $\pr_{\Liek}\colon\Lieg\to\Liek$ is the orthogonal projection with respect to $\langle \ , \, \rangle$.
\item[(b)] If $\langle \ , \, \rangle$ is $\Ad(K)$-invariant, then a horizontal curve $\gamma_{sR}\colon I\to G$ is a normal sub-Riemannian geodesic with respect to $\mathbf h$, if and only if, it is of the form
\begin{equation} \label{eq:sRKinvariant} \gamma_{sR}(t) = \gamma_R(t) \cdot \exp_G(-\lambda t), \qquad \lambda(t) = \pr_{\Liek} \kappa^\ell(\dot \gamma_R(t)),\quad t\in I,
\end{equation}
where $\gamma_R\colon I\to G$ is a Riemannian geodesic with respect to $\mathbf g$.
\end{itemize}
\end{theorem}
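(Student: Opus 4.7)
The plan is to use Theorem \ref{GeoGroup} in the special case $\Lieh = \Lieg$, which gives the Euler--Arnold equation $\dot u = \ad_u^\top u$ for the left logarithmic derivative $u = \kappa^\ell(\dot \gamma_R)$ of a Riemannian geodesic, and then to convert such a curve into a sub-Riemannian normal geodesic by right multiplication by a one-parameter subgroup of $K$. The technical core is the following vanishing identity, which I would establish first:
\[ \pr_\Liek \ad_x^\top(x + \xi) = 0 \qquad \text{for all } x \in \Lieh, \ \xi \in \Liek. \]
To prove it, pair with an arbitrary $w \in \Liek$: one obtains $\langle x, [x, w]\rangle + \langle \xi, [x, w]\rangle$. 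The first summand vanishes by $\ad(\Liek)$-invariance, which applied to the symmetric quantity $\langle [w,x], x\rangle$ gives $2\langle x, [w,x]\rangle = 0$; the second vanishes because $\Ad(K)$-invariance of $\Lieh$ forces $[x,w] \in \Lieh$, and $\Lieh \perp \Liek$.

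Part (a) is then almost immediate: the same pairing argument applied to $u \in \Lieg$ and $\xi = 0$ uses only $\ad(\Liek)$-invariance to show $\langle \dot u, w\rangle = \langle u, [u,w]\rangle = 0$ for every $w \in \Liek$, so $\dot \lambda = \pr_\Liek \dot u = 0$. For the forward direction of part (b), put $\eta(t) = \exp_G(-\lambda t)$, $\gamma_{sR} = \gamma_R \cdot \eta$. The product rule for the left Maurer-Cartan form, combined with $\kappa^\ell(\dot \eta) = -\lambda$ and the identity $\Ad_{\exp_G(\lambda t)} \lambda = \lambda$, yields
\[ \tilde u := \kappa^\ell(\dot \gamma_{sR}) = \Ad_{\exp_G(\lambda t)}(v + \lambda) - \lambda = \Ad_{\exp_G(\lambda t)} v, \]
which lies in $\Lieh$ by $\Ad(K)$-invariance of $\Lieh$. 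Hence $\gamma_{sR}$ is horizontal, and checking that $(\tilde u, \mu \equiv \lambda)$ solves \eqref{eq:NGGroup} reduces, through the equivariance $\Ad_g \circ \ad_x^\top = \ad_{\Ad_g x}^\top \circ \Ad_g$ for $g \in K$ (a one-line consequence of $\Ad(K)$-invariance of $\langle \cdot, \cdot \rangle$), to the Euler--Arnold equation for $u = v + \lambda$.

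For the converse in (b), let $\gamma_{sR}$ be a normal sub-Riemannian geodesic with multiplier $\mu \in C^\infty(I, \Liek)$ and $\tilde u \in \Lieh$. The vanishing identity applied to $(\tilde u, \mu)$ gives $\dot \mu = \pr_\Liek \ad_{\tilde u}^\top(\tilde u + \mu) = 0$, so $\mu$ is a constant $\lambda$. Setting $\gamma_R(t) = \gamma_{sR}(t) \cdot \exp_G(\lambda t)$ and running the same product-rule and equivariance computations in reverse, one verifies that $u := \kappa^\ell(\dot \gamma_R) = \Ad_{\exp_G(-\lambda t)} \tilde u + \lambda$ satisfies $\dot u = \ad_u^\top u$, with $\pr_\Liek u = \lambda$ by construction. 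The main technical obstacle is purely bookkeeping the $\Ad$- and $\ad^\top$-equivariance relations and keeping track of which ones require only $\ad(\Liek)$-invariance of the inner product versus the stronger $\Ad(K)$-invariance; substantively the theorem reduces to the preliminary vanishing identity together with the observation that $\Ad_{\exp_G(\lambda t)}|_\Lieh$ conjugates the Euler--Arnold flow to the normal sub-Riemannian flow.
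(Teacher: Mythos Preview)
Your proposal is correct and follows essentially the same route as the paper: both arguments pair with an arbitrary $w\in\Liek$ to establish $\pr_{\Liek}\ad_{\tilde u}^\top(\tilde u+\lambda)=0$, deduce that $\lambda$ is constant, and then relate $u_{sR}=\Ad_{\exp_G(\lambda t)}(u_R-\lambda)$ to the Euler--Arnold equation. The only presentational difference is that you package the $\Ad(K)$-invariance of the metric into the single equivariance identity $\Ad_g\circ\ad_x^\top=\ad_{\Ad_g x}^\top\circ\Ad_g$, whereas the paper unrolls this into an explicit chain of pairings $\langle\dot u_{sR},x\rangle=\cdots=\langle\ad_{u_{sR}}^\top(u_{sR}+\lambda),x\rangle$; one minor point worth making explicit is that ``$\Ad(K)$-invariance of $\Lieh$'' is not part of the hypotheses but is \emph{derived} (as the paper does) from $\Ad(K)$-invariance of $\langle\cdot,\cdot\rangle$ together with $\Lieh=\Liek^\perp$.
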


\begin{proof}
{\bf (a)} Let us recall that the $\ad(\Liek)$ invariance of the inner product $\langle \cdot , \cdot \rangle$ means that $\langle \ad_x y_1, y_2 \rangle=-\langle y_1, \ad_x y_2 \rangle.$ Let $u_R$ be the left logarithmic derivative of $\gamma_R$. Then $u_R$ is a solution to $\dot u_R = \ad_{u_R}^{\top} u_R$ by Theorem~\ref{GeoGroup}. This implies that $\langle \dot{u}_R, k \rangle = \langle u_R , [u_R, k] \rangle = 0$ for an arbitrary $k \in \Liek$ by the $\ad(\Liek)$-invariance. Since $\dot \lambda=\pr_{\Liek}\dot u_R$, we conclude that $\lambda$ is constant.

\noindent
{\bf (b)} 
Let us remark first that since the metric is $\Ad(K)$-invariant, the orthogonality of $\Liek$ and $\Lieh$ along with the obvious invariance of $\Liek$ under the adjoint action of $K$ imply that $\Lieh$ is invariant under the adjoint action of $K$ as well. In its turn, this implies $[\Liek, \Lieh] \subseteq \Lieh.$

Let $\gamma_{sR}$ be a sub-Riemannain geodesic with the left logarithmic derivative~$u_{sR}$. Then it is a solution to the equations
\begin{equation}\label{eq:sR}
\dot u_{sR}  =\pr_{\Lieh} \ad_{u_{sR}}^{\top}(u_{sR} +\lambda),\quad\dot\lambda=\pr_{\Liek} \ad_{u_{sR}}^{\top}(u_{sR} +\lambda).
\end{equation}
Since for any $k \in \Liek$, we have
$$\langle \dot \lambda, k \rangle = \langle u_{sR}, [u_{sR}, k] \rangle + \langle \lambda, [u_{sR}, k] \rangle = 0,$$
we conclude that $\dot\lambda=0$. The last product is equal to $0$ by the identity $[\Liek, \Lieh] \subseteq \Lieh$. It follows that $\ad_{u_{sR}}^{\top}(u_{sR} +\lambda)$ is a curve in $\Lieh$ and the equations~\eqref{eq:sR} reduce to 
\begin{equation}\label{eq:sRsimple}
\dot u_{sR}  = \ad_{u_{sR}}^{\top}(u_{sR} +\lambda).
\end{equation}
We need to show that equation~\eqref{eq:sRsimple} holds if and only if $\gamma_{sR}$ is of the form~\eqref{eq:sRKinvariant}. 

Let us assume that~\eqref{eq:sRKinvariant} holds, where $\gamma_R$ is a Riemannian geodesic with the left logarithmic derivative $u_{R}$ satisfying the geodesic equation $\dot u_{R}  = \ad_{u_{R}}^{\top}(u_{R})$, and such that $\lambda=\pr_{\Liek} \kappa^\ell(\dot \gamma_R(t))$.  
Notice that the left logarithmic derivative of the right hand side of~\eqref{eq:sRKinvariant} is $$u_{sR}(t) = \Ad_{\exp_G(\lambda t)}(u_R(t) - \lambda).$$ 
Then we have the following chain of equalities for any $x \in \Lieg$:
\begin{align*} \langle \dot u_{sR}, x \rangle
& =  \left\langle\Ad_{\exp_G(\lambda t)}\big( [\lambda, u_{R} -\lambda] + \dot u_R \big), x \right\rangle \\
& =  \left\langle [\lambda, u_R] + \dot u_R , \Ad_{\exp_G(- \lambda t)}(x) \right\rangle  & 
\text{(by $\Ad(K)$ invariant metric)}
\\
& =  \left\langle [\lambda, u_R] + \ad_{u_R}^{\top}( u_R) , \Ad_{\exp_G(- \lambda t)}(x) \right\rangle  & 
\text{(by geodesic equation)}
\\
& =  \left\langle [\lambda, u_R]  , \Ad_{\exp_G(- \lambda t)}(x) \right\rangle
+ \left\langle  u_R , \big[u_R, \Ad_{\exp_G(- \lambda t)}(x)\big] \right\rangle  
\\
& = \left\langle  u_R , \big[u_R -\lambda, \Ad_{\exp_G(- \lambda t)}(x)\big] \right\rangle  & 
\text{(by $\ad(\Liek)$ invariant metric)}
\\
& = \left\langle \Ad_{\exp_G( \lambda t)} u_R , \big[\Ad_{\exp_G( \lambda t)}(u_R -\lambda), x\big] \right\rangle  & 
\text{(by $\Ad(K)$ invariant metric)}
\\
& = \left\langle u_{sR} , [u_{sR}, x] \right\rangle +  \left\langle \lambda, [u_{sR}, x] \right\rangle = \left\langle \ad_{u_{sR}}^{\top} (u_{sR} + \lambda), x \right\rangle. 
\end{align*}
We conclude that $u_{sR}$ satisfies  equation~\eqref{eq:sRsimple}.

Conversely, suppose that $u_{sR}$ is a solution to the equation~\eqref{eq:sRsimple}. Similarly, we show that $u_R(t) = \Ad_{\exp_G(-\lambda t)}(u_{sR} + \lambda)$ with constant $\lambda$ is a solution to the equation $\dot u_R = \ad_{u_R}^{\top}(u_R)$. Thus, the equations $u_R(t) = \Ad_{\exp_G(-\lambda t)}(u_{sR} + \lambda)$ and $\dot u_R = \ad_{u_R}^{\top}(u_R)$  define uniquely the Riemannian geodesic $\gamma_{R}$ of the form $\gamma_{R}(t)=\gamma_{sR}(t)\exp_G(\lambda t)$, $t\in[0,1]$. It implies~\eqref{eq:sRKinvariant}. 
\end{proof}

We emphasize the following fact. 
 \begin{corollary}\label{cor1}
The left logarithmic derivative $u_{sR}$ of a curve $\gamma_{sR}$ satisfies the equation $\dot u_{sR} = \ad_{u_{sR}}^{\top}(u_{sR} + \lambda)$ with a constant $\lambda \in \Liek$.
 \end{corollary}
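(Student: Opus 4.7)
The plan is to read the corollary off directly from the proof of Theorem~\ref{TheoremLeft}(b), since both assertions (constancy of $\lambda$ and the simplified geodesic equation) were already established in the course of that argument. First, I would invoke Theorem~\ref{GeoGroup} to note that as a normal sub-Riemannian geodesic, $\gamma_{sR}$ and its left logarithmic derivative $u_{sR}$ satisfy the full system~\eqref{eq:sR}, with one equation projected onto $\Lieh$ and one onto $\Liek$.

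Next, I would invoke the $\Ad(K)$-invariance of $\langle \cdot, \cdot \rangle$, which (together with the orthogonality $\Lieh \perp \Liek$ and the obvious $\Ad(K)$-invariance of $\Liek$) forces $\Lieh$ to be $\Ad(K)$-invariant and in particular gives $[\Liek, \Lieh] \subseteq \Lieh$. Testing the second equation of~\eqref{eq:sR} against an arbitrary $k \in \Liek$ yields
\begin{equation*}
\langle \dot\lambda, k \rangle = \langle \ad_{u_{sR}}^\top(u_{sR} + \lambda), k \rangle = \langle u_{sR}, [u_{sR}, k] \rangle + \langle \lambda, [u_{sR}, k] \rangle,
\end{equation*}
where the first summand vanishes by $\ad(\Liek)$-invariance of the inner product and the second vanishes because $[u_{sR}, k] \in \Lieh$ is orthogonal to $\lambda \in \Liek$. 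Hence $\dot\lambda = 0$, so $\lambda \in \Liek$ is constant.

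The same computation applied to arbitrary $k \in \Liek$ shows $\pr_{\Liek}\ad_{u_{sR}}^\top(u_{sR} + \lambda) = 0$, meaning $\ad_{u_{sR}}^\top(u_{sR} + \lambda) \in \Lieh$. Therefore the projection $\pr_{\Lieh}$ in the first equation of~\eqref{eq:sR} is redundant, and the system collapses to $\dot u_{sR} = \ad_{u_{sR}}^\top(u_{sR} + \lambda)$ with constant $\lambda \in \Liek$, which is exactly~\eqref{eq:sRsimple}. There is no real obstacle here: the corollary is simply the extraction of these two facts, already contained inside the proof of Theorem~\ref{TheoremLeft}(b), and no further computation is required.
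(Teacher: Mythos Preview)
Your proposal is correct and follows exactly the paper's approach: the corollary has no separate proof in the paper, being stated as an emphasis of facts already derived inside the proof of Theorem~\ref{TheoremLeft}(b), and you have simply extracted those same two steps (constancy of $\lambda$ via testing against $k\in\Liek$, and redundancy of $\pr_{\Lieh}$) to arrive at~\eqref{eq:sRsimple}.
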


\begin{remark}
Theorem~\ref{TheoremLeft}(b) is still valid when $\mathbf g$ is a pseudo-metric (not necessarily positively definite) that restricts to a positively definite metric along $\calH$. An important information is that $\Lieh$ and $\Liek$ remain orthogonal, satisfy $\Lieg = \Lieh \oplus \Liek$, and the map $\ad^\top_X$ exists with respect $\langle \cdot , \cdot \rangle$. The geodesic $\gamma_R$ is no longer  Riemannian but still is a critical curve of $E(\gamma) = \int_0^1 \mathbf g(\dot \gamma, \dot \gamma) \, dt$.

Theorem~\ref{TheoremLeft}(b) can be generalized to principal bundles in the case of finite-dimensional manifolds, see~\cite[Theorem 11.8]{Mon}.
\end{remark}

\section{Group of diffeomorphisms on the unit circle and the Virasoro-Bott group} \label{sec:DiffVir}

In this section we apply previous results to  two concrete examples of infinite-dimensional geometry with constraints. They are two  infinite-dimensional Lie-Fr\'echet groups:  the group of diffeomorphisms of the unit circle and its central extension known as the Virasoro-Bott group. We first define them,  give reasons for the chosen constraints, and calculate  normal geodesics with respect to certain metrics. We show that the Euler equations for geodesics turn out to be some known non-linear PDE, namely, 
analogues of the KdV, Burgers,  Camassa-Holm and Hunter-Saxton equations. Finally, we prove the controllability on these groups directly.

Let $\theta$ either be an element of $\real$, or an element in the Lie group $S^1$ which we identify with $\real/(2\pi \integer)$. The derivatives with respect to $\theta$, will be denoted by  prime, i.e., $x' = \partial_\theta x.$

\subsection{The group $\Diff S^1$ and its Lie algebra}
Let $\Diff S^1$ denote the group of  orientation preserving diffeomorphisms of the unit circle $S^1$, which is the component of the identity of the group of all diffeomorphisms of $S^1$. Slightly  abusing notations, we shall use the symbol $\Diff S^1$ instead of $\Diff_0 S^1$ as in Section \ref{sec:controllability}, dropping the sub-index for the sake of simplicity. As a manifold, $\Diff S^1$ is modeled on the Fr\'echet space $\Vect S^1$ of smooth real vector fields on~$S^1$.  $\Vect S^1$ is the Lie algebra of the group $\Diff S^1$ consisting of vector fields $x \, \partial_\theta, x \in C^\infty(S^1)$, with the Lie brackets $[x \, \partial_\theta,y \, \partial_\theta]= (x'y-y'x) \partial_\theta$. Using the identification between $\Vect S^1$ and $C^\infty(S^1)$ we denote a vector field $x \, \partial_\theta$ simply as $x$. We can not use the exponential map to construct charts,  because it is not locally surjective \cite{K, Milnor}.  See also \cite{Milnor} for a description of the manifold structure on the diffeomorphism groups. In particular, the group $\Diff S^1$ is simple and non-real analytic.

We  denote by $\id$ the identity in $\Diff S^1$. Let us identify $T\Diff S^1$ and $\Diff S^1 \times \Vect S^1$    by associating the element $(\gamma(0), \dot \gamma(0) \partial_{\theta})$ to the equivalence class of curves $[t  \mapsto \gamma(t)] \in T_{\gamma(0)} \Diff S^1$ passing through $\gamma(0)$.
The left and right actions can then be described as 
\begin{equation} \label{leftrightDiff} d\ell_{\varphi} (\phi, x \partial_\theta) = \big(\varphi \circ \phi, (\varphi' x) \partial_\theta \big), \qquad
dr_{\varphi} (\phi, x \partial_\theta) = \big(\phi \circ \varphi, (x \circ \varphi) \partial_\theta\big),\end{equation}
where $\phi, \varphi \in \Diff S^1, x \in C^\infty(S^1)$.  Notice that  \eqref{leftrightDiff} implies $\Ad_{\varphi} x \partial_{\theta} = \varphi' x(\varphi^{-1}) \partial_\theta.$

It is often convenient to work with the universal covering group $\wDiff S^1$ of $\Diff S^1$ that consists of all orientation preserving diffeomorphisms of $\real$, satisfying $\phi(\theta + 2\pi) = \phi(\theta) + 2 \pi.$

\subsection{The Virasoro-Bott group} \label{sec:Vir-Bott}

The group $\wDiff S^1$ has a unique non-trivial central extension by $\real$ called the {\it Virasoro-Bott group}.
It can be described as follows. Define a Lie algebra $\Lieg_{\mu\nu}$ as the vector space $\Vect S^1 \oplus \real$, with the commutator
$$\big[(x ,a_1), (y , a_2)\big]  = \Big([x, y ], \omega_{\mu\nu}(x, y )\Big), \quad 
\omega_{\mu\nu}(x,y) = \frac{1}{2\pi}\int_0^{2\pi} \left(\mu x(\theta) y'(\theta) + \nu x'(\theta) y''(\theta)\right)\, d\theta.$$
The extension is trivial if and only if $\nu= 0$ since the term $\omega_{1,0}(x,y)=\frac{1}{2\pi}\int_0^{2\pi} x(\theta) y'(\theta)\, d\theta $ represents the algebra 2-coboundary. Indeed, let us introduce a linear map $\eta\colon\Vect S^1\to\mathbb R$ by
\begin{equation}\label{eta}
\eta(x \partial_\theta) = \frac{1}{2\pi} \int_0^{2\pi} x(\theta)\,d\theta,
\end{equation}
and let us observe that $\omega_{10}(x,y)=-\frac{1}{2}\eta([x,y])$.  All non-trivial extensions coinciding modulo an algebra 2-coboundary are isomorphic Lie algebras~\cite{GelFu}. Hence, the Lie algebras $\Lieg_{\mu\nu}$ with $\nu \neq 0$ are all isomorphic, because they differ only by a 2-coboundary up to a scaling factor. The extended Lie-Frech\'et algebra $\Lieg_{\mu\nu}$ for $\nu \neq 0$ is called the {\it Virasoro algebra}. This attribution appeared because of in physics~\cite{Virasoro}. The algebra 2-cocycle $\omega_{\mu\nu}$ is called the Gelfand-Fuchs cocycle. There is a simply connected Lie group $\calG_{\mu\nu}$ corresponding to each Lie algebra $\Lieg_{\mu\nu}$. It can be considered as the set $\wDiff S^1 \times \real$ with the group operation
\begin{equation}\label{multVir}
(\phi_1, b_1) (\phi_2,b_2) = \Big(\phi_1 \circ \phi_2, b_1+ b_2 + \mu A(\phi_1, \phi_2) + \nu B(\phi_1,\phi_2)\Big),
\end{equation}
where
$$A(\phi_1, \phi_2) = \frac{1}{4\pi} \int_0^{2\pi} (-\phi_1 \circ \phi_2  + \phi_1 + \phi_2 - \id) d\theta,\quad\id\in\wDiff S^1,$$
$$B(\phi_1, \phi_2) = \frac{1}{4\pi} \int_0^{2\pi} \log (\phi_1 \circ \phi_2)' d \log \phi_2'.$$
The group $\calG_{\mu0}$ is isomorphic to the product group $\wDiff S^1 \times \real$, where the sign $(\times)$  means the direct product of groups, while for $\nu \neq 0$, the extension $\calG_{\mu\nu}$ is non-trivial. The term $B$ in the multiplication law~\eqref{multVir} represents a Bott 2-cocycle obtained in~\cite{Bott} and the part $A$ is the group 2-coboundary. 
All the groups $\calG_{\mu\nu}$ with $\nu \neq 0$ are isomorphic and called the {\it Virasoro-Bott group} because of the Bott cocycle $B(\phi_1, \phi_2)$. The construction of the Bott cocycle is widely presented in the literature, see~\cite{Bott,K}. To find the coboundary $A$, we observe that a 2-coboundary is defined by a smooth function $F\colon\wDiff S^1\to\real$ such that 
$$
A(\phi_1, \phi_2)=F(\phi_1)+F(\phi_2)-F(\phi_1\circ\phi_2),\quad \phi_1,\phi_2\in\wDiff S^1.
$$
The existence of the identity and inverse element in $\calG_{\alpha\beta}$ implies the conditions 
$$
F(\id)=0,\qquad F(\phi)+F(\phi^{-1})=0,\quad \id,\phi\in\wDiff S^1.
$$
At the last step, one has to check that the function $F(\phi)=\frac{1}{4\pi}\int_{0}^{2\pi}\big(\phi(\theta)-\theta\big)\,d\theta$ satisfies these conditions and the infinitesimal version of the group coboundary $A$ coincides with the algebra coboundary $\omega_{10}$.

Remark, that if $(\gamma(t), b(t))$ is a curve in $\calG_{\mu\nu}$, then
$$\kappa^\ell(\dot \gamma(t), \dot b(t)) = \left( u(t) \, , \, C(t) \right),$$
where
$$u(t) = \frac{\dot \gamma(t)}{\gamma'(t)} \text{ and } C(t) = \dot b(t) - \frac{\mu}{4\pi} \int_0^{2\pi} u(t) d\theta
+ \frac{\mu}{4\pi} \int_0^{2\pi} u(t) d\gamma(t) + \frac{\nu}{4\pi} \int_0^{2\pi} u'(t) d\log \gamma'(t).$$
Here we used the formula $\kappa^\ell(\dot \gamma(t), \dot b(t))=\frac{d}{ds}\Big\vert_{s=0}(\gamma^{-1}_{t},-b(t))\cdot (\gamma_{t+s},b(t+s))$.

\subsection{Horizontal sub-bundles, CR-structure and complex structure} \label{sec:HCRC}

Notice that the linear map $\eta$ from~\eqref{eta} associates to each vector field from $\Vect S^1$ its mean value on the circle.
Let $\Vect_0 S^1$ denote the kernel of $\eta$ consisting of all vector fields with zero mean value. We use $\Liek$ to denote the subalgebra of $\Vect S^1$ of constant vector fields. Clearly $\Vect S^1 = \Vect_0 S^1 \oplus \Liek.$ The subgroup corresponding to $\Liek$  in $\Diff S^1$ is the abelian group of rotations $K= \Rot S^1\simeq S^1$.  It  corresponds to the group of translations on the universal cover, which we  denote by $\widetilde K$.

Define a horizontal sub-bundle $\calH$ of $T\Diff S^1$ or $T\wDiff S^1$ by left translations of $\Vect_0 S^1$. There exists a left-invariant almost-complex structure $J$ on $\calH$, given at the identity by the  Hilbert transform as
\begin{equation} \label{eq:HilbertT}Jx(\theta) = \frac{1}{2\pi} \text{p.v.} \int_0^{2\pi} \frac{x(t)}{\tan\left(\frac{t-\theta}{2} \right)} dt, \qquad x \in \Lieh.
\end{equation}
The triple $(\Diff S^1, \calH, J)$ (and hence, also $(\wDiff S^1, \calH, J)$) is an infinite-dimensional CR-manifold \cite{Lempert}. 
A curve $\gamma\colon I\to T\Diff S^1\ (T\wDiff S^1)$ is $\calH$-horizontal if
$$\eta\big(\kappa^\ell(\dot \gamma(t))\big)=\eta\left(\tfrac{\dot \gamma(t)}{\gamma'(t)}\right) = 0 \text{ for every } t\in I.$$

Similarly, we define a horizontal sub-bundle $\calE$ of $T\calG_{\mu\nu}$ by identifying $\Vect_0 S^1$ with a subset $(\Vect_0 S^1,0)$ of the extended algebra $\Lieg_{\mu\nu}$ and defining $\calE$ by left translations of $\Vect_0 S^1$ on $\calG_{\mu\nu}$. There is a complex structure on $\calG_{\mu\nu}$, such that $\calE$ becomes a holomorphic vector bundle~\cite{Lempert}. The complex structure on $\calG_{\mu\nu}$ restricted to $\calE$ is also obtained by left translation of the  Hilbert transform \eqref{eq:HilbertT}, and we denote it by the same symbol $J$. A choice of complement of $\Vect_0 S^1$ in $\Lieg_{\mu\nu}$ is given by
$$\widehat \Liek = \{ (a_0 \partial_\theta, a) \in \Lieg_{\mu\nu} \, : \, a_0,a \in \real \}.$$
This is an abelian sub-algebra corresponding to the abelian sub-group
$$\widehat K = \{(\theta \mapsto \theta + b_0, b) \in \calG_{\mu\nu} \, : \, b_0, b \inÊ\real \}.$$

\begin{proposition}\label{invAdH}
The sub-bundle $\calH$ of $T\Diff S^1$ {\rm(}or $T\wDiff S^1${\rm)} is invariant under the action of rotations $K$ {\rm(}or translations $\widetilde K${\rm)}, and the sub-bundle $\calE$ of $T\calG_{\alpha\beta}$  is invariant under the action of~$\widehat K$.
\end{proposition}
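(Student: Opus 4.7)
The plan is to translate each of the three invariance statements into $\Ad$-invariance of the corresponding subspace of the Lie algebra, and then to check the $\Ad$-invariance by direct computation, using the formula for $\Ad$ on $\Diff S^1$ already recorded in the excerpt.

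First I would handle $\calH$ on $\Diff S^1$ and $\wDiff S^1$. Since $\calH$ is defined by left translation of $\Vect_0 S^1$, right-invariance under a subgroup reduces to $\Ad$-invariance of the subspace. Taking $\varphi \in K$ (so $\varphi(\theta)=\theta+\theta_0$, $\varphi'\equiv 1$, $\varphi^{-1}(\theta)=\theta-\theta_0$), the formula $\Ad_\varphi x\,\partial_\theta = \varphi'\cdot x(\varphi^{-1})\,\partial_\theta$ gives $\Ad_\varphi x = x\circ\varphi^{-1}$, and then a change of variable together with $2\pi$-periodicity yields
$$\eta(\Ad_\varphi x) = \frac{1}{2\pi}\int_0^{2\pi} x(\theta-\theta_0)\,d\theta = \frac{1}{2\pi}\int_0^{2\pi} x(\sigma)\,d\sigma = \eta(x).$$
So $x\in\Vect_0 S^1$ implies $\Ad_\varphi x\in\Vect_0 S^1$, and the same argument works verbatim for $\widetilde K$ acting on $\wDiff S^1$.

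For $\calE$ on $\calG_{\mu\nu}$ my strategy would be to show that both group cocycles $A(\phi_1,\phi_2)$ and $B(\phi_1,\phi_2)$ vanish as soon as one of the arguments lies in $\widetilde K$. For $A$: if $\phi_1(\theta)=\theta+b_0$ then $-\phi_1\circ\phi_2+\phi_1+\phi_2-\id\equiv 0$ pointwise; if $\phi_2(\theta)=\theta+b_0$ then the integrand is $\phi_1(\theta)-\phi_1(\theta+b_0)+b_0$, whose integral vanishes after a substitution that uses the quasi-periodicity $\phi_1(\theta+2\pi)=\phi_1(\theta)+2\pi$ of elements of $\wDiff S^1$. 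For $B$: if $\phi_1\in\widetilde K$ then $(\phi_1\circ\phi_2)'=\phi_2'$, so the integrand becomes $\log\phi_2'\,d\log\phi_2' = \tfrac12 d((\log\phi_2')^2)$ and integrates to zero by periodicity of $\log\phi_2'$; if $\phi_2\in\widetilde K$ then $\phi_2'\equiv 1$ so $d\log\phi_2'=0$. Consequently, for $(\phi,b)\in\widehat K$ and any $(\psi,c)\in\calG_{\mu\nu}$ the multiplication law~\eqref{multVir} simplifies to $(\phi,b)(\psi,c)=(\phi\circ\psi,b+c)$ and likewise on the other side, so conjugation in $\calG_{\mu\nu}$ reduces to conjugation in $\wDiff S^1$ with the $\real$-coordinate untouched. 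Passing to the infinitesimal level, $\Ad_{(\phi,b)}(x,0)=(\Ad_\phi x,0)$, which by the first part lies in $(\Vect_0 S^1,0)$ whenever $x\in\Vect_0 S^1$.

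The only real obstacle I foresee is the bookkeeping for $A(\phi_1,\phi_2)$ with $\phi_2\in\widetilde K$: the vanishing is not pointwise, and one has to use quasi-periodicity together with a splitting of the integral $\int_{b_0}^{2\pi+b_0}\phi_1(\theta)\,d\theta$ to produce a cancellation with the $+2\pi b_0$ term. Everything else is either a trivial integral identity or an immediate consequence of the translation-invariance of the mean on $S^1$.
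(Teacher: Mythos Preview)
Your proof is correct. For $\calH$ it coincides verbatim with the paper's argument: compute $\Ad_\rho x(\theta)=x(\theta-b_0)$ for a rotation $\rho$ and observe that $\eta$ is preserved by the change of variable.

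For $\calE$ the paper simply writes ``by similar arguments'' and leaves the verification to the reader; you have supplied a concrete and correct way to fill that in, by showing that both group cocycles $A$ and $B$ vanish whenever one argument lies in $\widetilde K$, so that conjugation by $\widehat K$ in $\calG_{\mu\nu}$ reduces to conjugation by $\widetilde K$ in $\wDiff S^1$ with the central coordinate untouched. This is more than the paper does, and your bookkeeping for $A(\phi_1,\phi_2)$ with $\phi_2\in\widetilde K$ via quasi-periodicity is exactly the right way to handle that case. A marginally shorter route, closer in spirit to the paper's brevity, is to note that $\widehat K$ is a connected \emph{finite}-dimensional abelian subgroup, so $\Ad(\widehat K)$-invariance follows from $\ad(\widehat\Liek)$-invariance, and the latter is immediate since $\omega_{\mu\nu}(p_0,x)=\tfrac{\mu}{2\pi}\int_0^{2\pi}x'\,d\theta=0$ and $[p_0,x]=-x'\in\Vect_0 S^1$; but your group-level argument is equally valid and arguably more transparent.
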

\begin{proof}
If $\rho: \theta \to \theta + b_0$ is a rotation/translation, then $\Ad_{\rho}(x)(\theta) = x(\theta - b_0)$.  Therefore, 
$\eta(\Ad_{\rho}(x)) = \eta(x)$, which means that $\calH$ is invariant under the action of $K$ (or $\widetilde K$). By  similar arguments $\calE$ is invariant under $\widehat K$.
\end{proof}

As a corollary of the proof of Proposition~\ref{invAdH} and~\eqref{eq:HilbertT}, we obtain
\begin{equation} \label{eq:Jinvariant} J\Ad_{\rho}(x) = \Ad_{\rho}(Jx).\end{equation}
for  $\rho\in K, \widetilde K$ or $\widehat K$ and $ x\in \calH$ or $x\in \calE$ respectively.

\subsection{Normal geodesics with respect to the sub-Riemannian metrics}
Let us describe the normal sub-Riemannian geodesics with respect to a two-parameter family of left-invariant metrics on $\calH$ and $\calE$, that includes the Sobolev $H^0, H^1$, and $H^{1,1}$ metrics.

Let $\langle \ , \, \rangle^{1,0}$ denote the standard $L^2$ or $H^0$ inner product on $\Vect S^1$
$$\left\langle x , y \right\rangle^{1,0} = \frac{1}{2\pi} \int_0^{2\pi} x(\theta) y(\theta) d\theta.$$
Observe that  $\ad_x^{\top}$ exists with respect to this inner product and it is given by the formula
\begin{equation} \label{eq:L2coadjoint} 
\ad_x^{\top} y = x y' + 2 x' y.
\end{equation}
Let $\mathbf g^{1,0}$ be the Riemannian metric obtained by the left translation of $\langle \cdot , \cdot \rangle^{1,0}$, and let $\mathbf h^{1,0}$ be its restriction to $\calH$. If we denote by $p_0 = \partial_\theta$ the basis vector for $\Liek$, then
$$
\langle \ad_{p_0} x , y \rangle^{1,0}=\langle [p_0, x ] , y \rangle^{1,0} = - \langle x' , y \rangle^{1,0} = \langle x , y' \rangle^{1,0}= - \langle x , [p_0,y] \rangle^{1,0}=-\langle x ,\ad_{ p_0}y \rangle^{1,0}.
$$
This implies that the inner product $\langle \ , \, \rangle^{1,0}$ is invariant under the adjoint action of the group~$K$(or $\widetilde K$ if we are working on the universal cover). Moreover, the subspaces $\Vect_0 S^1$ and $\Liek$ are orthogonal with respect to the inner product $\langle \ , \, \rangle^{1,0}$, converting the linear map $\eta$ in~\eqref{eta} to an orthogonal projection to $\Liek$. Then, we can obtain the normal sub-Riemannian geodesics for $\mathbf h^{1,0}$ from the Riemannian geodesics for $\mathbf g^{1,0}$ by Theorem~\ref{TheoremLeft}.  The Riemannian geodesics are obtained as solutions to the Burges equation $\dot u = \ad_u^\top u = 3 u u'$, and the corresponding normal sub-Riemannian geodesics are solution to the equations
$$
\kappa^{\ell}(\dot \gamma) = u \in \Vect_0 S^1, 
\qquad\dot u = \ad_{u}^\top(u+ \lambda) = 3 u u' + 2\lambda u',\quad u\in\Vect_0 S^1,\ \lambda \in \real$$
by Corollary~\ref{cor1}. If we denote by $\mathbf b^{1,0}$  the Riemannian metric  on the symmetric space $B = \Diff S^1 /K$ induced by $\mathbf g^{1,0}$, as we described in Section \ref{sec:RiemannianSubmersions}, then the geodesics in $B$ with respect to $\mathbf b^{1,0}$ are given as projections of the normal sub-Riemannian geodesics $\gamma$ with $\lambda = 0$, or equivalently, by projections of Riemannian geodesics, which are horizontal to $\calH$.

More generally, we can define a two-parameter family $\langle \ , \, \rangle^{\alpha\beta}_0$ of inner products on $\Vect_0 S^1$ by the formula
\begin{eqnarray*}
\langle x , y \rangle^{\alpha\beta}_0  & = & \frac{1}{2\pi} \int_0^{2\pi} (\alpha x(\theta) y(\theta) + \beta x'(\theta) y'(\theta))d\theta
\\
& = & 
- \frac{1}{2\pi}\int_0^{2\pi}x(\theta)L_{\alpha\beta}y(\theta)d\theta=-\langle x ,L_{\alpha\beta}y  \rangle^{1,0},\quad x,y\in\Vect_0 S^1.
\end{eqnarray*}
Here we use the operator $L_{\alpha\beta}\colon\Vect S^1\to \Vect S^1$ defined by $L_{\alpha\beta}x = \beta \partial^2_\theta x - \alpha x$.
In order to make the bilinear map $\langle \cdot, \cdot \rangle_0^{\alpha\beta}$ to be a true inner product, we have to require  $\alpha \neq - n^2 \beta$, $n\in\mathbb N$ for  non-degeneracy, and $\beta \geq 0$ and $\alpha > - \beta$ for  positive definiteness. Let us extend the inner product $\langle \ , \, \rangle^{\alpha\beta}_0$ to the entire Lie algebra $\Vect S^1$ by the formula
\begin{equation} \label{eq:IPextension} \langle x , y \rangle^{\alpha\beta} = \big\langle x- \eta_x, y-\eta_y \big\rangle^{\alpha\beta}_{0}+ \eta_x \eta_y\quad x,y \in \Vect S^1, \quad \eta_x = \eta(x), \ \  \eta_y = \eta(y).
\end{equation}
Notice that
$$
\langle x , y \rangle^{\alpha\beta} = - \left\langle L_{\alpha\beta}(x-\eta_x), y-\eta_y \right\rangle^{1,0} + \eta_x \eta_y
= \left\langle - L_{\alpha\beta}(x-\eta_x) + \eta_x, y \right\rangle^{1,0} 
$$
since $\left\langle L_{\alpha\beta}(x-\eta_x), \eta_y \right\rangle^{1,0}=0$.
Let us define a Riemannian metric $\mathbf g^{\alpha\beta}$  by the left translation of $\langle \ , \, \rangle^{\alpha\beta}$, and let $\mathbf h^{\alpha\beta}$ be its restriction to $\calH$. It is easy to check that all conditions of Theorem~\ref{TheoremLeft} are satisfied.

Rather than finding an explicit formula for the adjoint of $\ad_x$ with respect to the inner product $\langle \cdot , \cdot \rangle^{\alpha\beta}$, which does exist but is complicated for $\beta \neq 0$, we use a simpler formula of the adjoint of the $L^2$ metric given in~\eqref{eq:L2coadjoint}. Together with Corollary~\ref{cor1}, this implies that any left logarithmic derivative $u$ of a normal sub-Riemannian geodesic for the metric $\mathbf h^{\alpha\beta}$ must satisfy
\begin{align*}
\langle L_{\alpha\beta} \dot u, x \rangle^{1,0} &= - \langle \dot u, x \rangle^{\alpha\beta} \\
& = - \langle u + \lambda, [u,x] \rangle^{\alpha\beta} = \langle L_{\alpha\beta}(u) - \lambda, x \rangle^{1,0} \\
&= \langle \ad_{u}^\top(L_{\alpha\beta} u- \lambda), x \rangle^{1,0}
\end{align*}
for any $x \in \Vect S^1$. Hence we have the geodesic equation $L_{\alpha\beta} \dot u = \ad_{u}^{\top}(L_{\alpha\beta}u-\lambda)$ or
$$
\beta \dot u'' - \alpha \dot u = \beta (u u''' +2u'u'')-3\alpha uu'-2\lambda u'
$$
If $\mathbf b^{\alpha\beta}$ is the Riemannian metric on $B$ induced by $\mathbf h^{\alpha\beta}$, then the Riemannian geodesics on $B$ are given as projections of the solutions with $\lambda = 0$.

We can also extend this inner product $\langle \ , \, \rangle^{\alpha\beta}$ to the Virasoro algebra $\Lieg_{\mu\nu}$. The extension is given by the formula
$$\Big\langle (x \partial_\theta, a_1), (y \partial_\theta, a_2) \Big\rangle^{\alpha\beta}_{\mu\nu}
= \langle x , y \rangle^{\alpha\beta} + a_1 a_2.$$
Let us calculate the adjoint $\ad_{(x, a)}^{\top}$ of $\ad_{(x,a)}$ with respect to the metric $\langle \ , \, \rangle^{1,0}_{\mu\nu}$. Notice that
$$
\omega_{\mu\nu}(x,y)=\frac{1}{2\pi}\int_0^{2\pi} \big(\mu x(\theta) y'(\theta) + \nu x'(\theta) y''(\theta)\big)\, d\theta=-\langle x, L_{\mu\nu}y'\rangle^{1,0}.
$$
Then we calculate
\begin{eqnarray}\label{eq:L2VirCoad} 
\Big\langle \ad_{(x,a_1)}^{\top} (y,a_2), (z, a_3) \Big\rangle^{1,0}_{\mu\nu}
& = &
\langle y , [x,z] \rangle^{1,0} -a_2\omega_{\mu\nu}(z,x)\nonumber
\\
&=&
\langle \ad_x^{\top}y , z \rangle^{1,0}+\langle z , a_2L_{\mu\nu}x' \rangle^{1,0}
\\
& = & 
\Big\langle (xy'+2x'y +  a_2L_{\mu\nu}x',0), (z, c) \Big\rangle^{1,0}_{\mu\nu} \nonumber
\end{eqnarray}
by formula~\eqref{eq:L2coadjoint}.

Let $\mathbf g^{\alpha\beta}_{\mu\nu}$ be the Riemannian metric on $\mathcal G_{\mu\nu}$ obtained by left translations of $\langle \ , \, \rangle^{\alpha\beta}_{\mu\nu}$, and let $\mathbf h^{\alpha\beta}_{\mu\nu}$ be its restriction to the sub-bundle $\calE$. Then  it is easy to verify that the conditions of Theorem~\ref{TheoremLeft} are satisfied. We start writing the geodesic equations for a particular case, namely for the metric $\langle \ , \, \rangle^{1,0}_{\mu\nu}$. The left logarithmic derivative $(u(t),0)\in (\Vect_0S^1,0)\subset\mathfrak g_{\mu\nu}$ of a normal sub-Riemannian geodesic $(\gamma,b)\colon I\to \calG_{\mu\nu}$ is a solution to the equation $(\dot u, 0) = \ad_{(u, 0)}^{\top}(u + \lambda_1, \lambda_2)$, $\lambda_1,\lambda_2\in\mathbb R$. This means that $u$ is a solution to
\begin{equation}\label{eq10}
\dot u = 3 u u' + (2 \lambda_1- \lambda_2 \mu) u' + \lambda_2 \nu u''',\qquad u\in\Vect_0 S^1.
\end{equation}
The corresponding Riemannian geodesics with respect to $\mathbf g^{1,0}_{0,1}$ satisfy the KdV equation.

In order to generalize the equation~\eqref{eq10}, we use the same arguments as above and conclude that
the normal critical curves $(\gamma(t),b(t))$, $t\in I$, in $\calG_{\mu\nu}$ with respect to $\mathbf h^{\alpha\beta}_{\mu\nu}$ have the left logarithmic derivative $(u,0)$ and they are solutions to the equation
$(L_{\alpha\beta} \dot u,0) = \ad_{(u,0)}^{\top}(L_{\alpha\beta}u-\lambda_1,-\lambda_2)$,
where the operator $\ad_{(u,0)}^{\top}$ is expressed as in~\eqref{eq:L2VirCoad}. This leads to the equation
$$L_{\alpha\beta} \dot u = u L_{\alpha\beta} u' + 2 u' L_{\alpha\beta} u - 2 \lambda_1 u' - \lambda_2 L_{\mu\nu} u',$$
whith $u \in \Vect_0 S^1$ and $\lambda_1, \lambda_2 \in \real$.

\begin{remark}
There are many results related to Riemannian geodesics for the invariant metrics on the Virasoro-Bott group. A good overview of these results can be found in \cite{K}. Here the right-invariant approach is chosen, but results differ only by a sign from the left-invariant point of view. We only mention here that the geodesic equations with respect to right-invariant metrics corresponding to $\langle \cdot , \cdot \rangle^{1,0}, \langle \cdot , \cdot \rangle^{0,1}$ and $\langle \cdot , \cdot \rangle^{1,1}$ respectively, are given by the KdV equation, the Hunter-Saxton equation, and the Camassa-Holm equation. Similarly, the geodesic equations with respect to the right-invariant metric on $\wDiff S^1$ produced by $\langle \cdot , \cdot \rangle^{1,0}$ and $\langle \cdot , \cdot \rangle^{1,1}$ are given by Burgers' equation and the non-extended Camassa-Holm equation.
\end{remark}

\subsection{Relationship to univalent functions}
Let us denote by $B$  the homogeneous space
\begin{equation} \label{eq:Base} 
B = \Diff S^1/ K \cong \wDiff S^1/\widetilde K \cong \calG_{\mu\nu}/\widehat K.
\end{equation}
The bundles $\calH$ and $\calE$ are the Ehresmann connections for the respective submersions $\Diff S^1 \to B$ and $\calG_{\mu\nu} \to B$. We know that $J$ from~\eqref{eq:Jinvariant} induces a well-defined almost-complex structure on $B$, which is in fact, a complex structure. A common way to visualize this is by identifying $B$ with the space of normalized conformal embeddings of the unit disk into $\mathbb C$, see~\cite{AM, Kirillov0,Kirillov1}.

Let us consider the space $\calA_0$ of all holomorphic functions
$$F: \unitD \to \comp, \qquad F(0) = 0,\quad\text{with}\quad \unitD=\{z:\,\,|z|<1\},$$
such that the extension of $F$ to the boundary $S^1$ is $C^\infty(\hat\unitD, \comp)$. Here, $\hat{\unitD}$ denotes the closure of $\unitD$. 
The class $\mathcal A_0$  is a complex Frech\'et vector space where the topology is defined by the seminorms
$$
\|F\|_m=\sup \{ |F^{(m)}(z)| \ \mid\ z\in\hat{\mathbb D}\},
$$ which is equivalent to the uniform convergence of all derivatives in $\hat{\mathbb D}$. The local coordinates can be defined by
 the embedding of $\calA_0$ to $\mathbb C^{\mathbb N}$ given by
$$F = \sum_{n=1}^\infty a_n z^n \mapsto (a_1, a_2, \dots ).$$
Let $\calF_0$ be a subclass of $\calA_0$ consisting of all univalent  functions $f\in\calA_0$, normalized by  $f'(0) =1$. The de Branges theorem~\cite{deBranges} yields that $\calF_0$ is contained in the bounded subset
$$1 \times \prod_{n=2}^\infty n \unitD \subseteq \comp^{\mathbb{N}}.$$

Let $\unitD_-$ be the exterior of the unit disk $\mathbb D=\mathbb D_+$. For any $f \in \calF_0$, we define a {\it matching function}  $g:\unitD_- \to \comp$, such that the image of $\mathbb D_-$ under $g$ is exactly the exterior of $f(\mathbb D_+)$, and let $g$ satisfy the normalization $g(\infty) = \infty$. Note that such $g$ exists by the Riemann mapping theorem. Since both functions $f$ and $g$ have a common boundary, $g$ also has a smooth extension to the closure $\hat{\mathbb D}_-$ of $\mathbb D_-$. Therefore, the images $g(S^1)$ and $f(S^1)$ are defined uniquely and represent the same smooth contour in $\comp$. If $g$ and $\widetilde g$ are two matching functions to $f$, then  they are related by a rotation
$$\widetilde g(\zeta) = g(\zeta w),\quad \zeta \in \unitD_-, \quad |w|=1.$$
For an arbitrarily matching function $g$ to $f \in \calF_0$ the diffeomorphism $\phi\in\Diff S^1$, given by
\begin{equation} \label{corsp} 
e^{i\phi(\theta)} = (f^{-1} \circ g)(e^{i\theta}),
\end{equation}
is uniquely defined by $f$ up to the right superposition with a rotation.  The relation \eqref{corsp} gives a holomorphic bijection
\begin{equation} \label{corsp1} B = \Diff S^1/K \cong \calF_0,\end{equation}
see  \cite{AM, Kirillov0,Kirillov1}.
The complex structure on $B$ induced by $\calF_0$ is the same as the one given by $J$.

\subsection{Metrics on $\calH$ corresponding to invariant K\"ahlerian metrics}

The left action of $\Diff S^1$ is well-defined on~$B$.  Let us choose an Hermitian metric on the base space $B$ assuming that this metric is K\"ahlerian and invariant under the action of $\Diff S^1$. All pseudo-Hermitian metrics on $B$ are included into the two-parameter family $\mathbf b_{\alpha\beta}$, see~\cite{Kirillov1, Unirreps, Kirillov4}. We will describe these metrics identifying $B$ and $\calF_0$ as in the previous section. It is sufficient to describe this metric only at $\id_{\unitD} \in \calF_0$ because at other points of $\calF_0$ the metric $\mathbf b_{\alpha\beta}$ is defined by the left action of $\Diff S^1$. Any smooth curve $f_t$ in $\calF_0$ with $f_0 = \id_{\unitD}$ can be written as
$$f_t(z) = z + t z F(z) + o(t), \qquad F \in \calA_0.$$
Hence, we can identify $T_{\id_{\unitD}} \calF_0$ with $\calA_0$ by relating $[t \mapsto f_t]$ to $F$. With this identification, $\mathbf b_{\alpha\beta}$ can be written as
\begin{align}\label{metric}
\mathbf b_{\alpha\beta}\big\vert_{\id_{\unitD}}(F_1, F_2) & = \frac{2}{\pi} \iint_{\unitD} \Big( \alpha F_1' \overline{F}_2' + \beta (z F_1')' \overline{(z F_2')'} \Big) d\sigma(z),\nonumber \\
& = 2\sum_{n=1}^\infty (\alpha n + \beta n^3) a_n \overline{b}_n,
\end{align}
where $d\sigma(z)$ is the area element and
$F_1(z) = \sum_{n=1}^\infty a_n z^n$, $F_2(z) = \sum_{n=1}^\infty b_n z^n$.
If $\alpha \neq -n^2 \beta, n \in \integer$, then the metric $\mathbf b_{\alpha\beta}$ is non-degenerating pseudo-Hermitian. Otherwise, $\mathbf b_{\alpha\beta}$ is degenerating along a distribution of complex dimension 1. Moreover, we require $\beta \geq 0$ and $- \alpha < \beta$ in order to obtain a positively definite Hermitian metric. 

It is  impossible to write the left action of $\Diff S^1$ on $\calF_0$  explicitly, therefore, 
it is not easy to describe $\mathbf b_{\alpha\beta}$ globally on $\calF_0$. However, these metrics can be lifted to the metrics on $\calH$ which are easier to study. Let $\pi: \Diff S^1 \to \calF_0$ be the canonical projection, and let us consider the injective map
$$\begin{array}{rccrc} d_{\id} \pi: & \Vect_0 S^1& \to & T_{\id_{\unitD}}\calF_0 \cong & \calA_0 \\
& x \partial_\theta & \mapsto & & F \end{array}. $$
Then the elements $F$ and $x$ are related by the formula, see~\cite{Kirillov1},
$$F(e^{i\theta}) = -\frac{i}{2}\big(x(\theta) - iJx(\theta)\big).$$
where $J$ is defined in~\eqref{eq:HilbertT}. Observe that
\begin{align*}
\mathbf b_{\alpha\beta}|_{\id_{\unitD}} (F_1,F_2) & = \frac{2}{\pi} \iint_{\unitD} \Big( \alpha F'_1 \overline{F}_2' + \beta (z F_1')' \overline{(z F_2')'} \Big) d\sigma(z) \\
& = \frac{-i}{\pi} \iint_{\unitD} \left( \alpha dF_1 \wedge d\overline{F}_2 + \beta d(zF_1') \wedge d\overline{(zF_2')} \right)\\
& = \frac{-i}{\pi} \int_{S^1} \left( \alpha F_1 d\overline{F}_2 + \beta (zF_1') d\overline{(zF_2')} \right).
\end{align*}
So, if $F_1$ and $F_2$ on the boundary coincides with respectively $-\frac{i}{2}\big(x - iJx\big)$ and $-\frac{i}{2}\big(x - iJx\big)$ for any $x, y \in \Vect_0 S^1$, we conclude that
\begin{align*}
\mathbf b_{\alpha\beta}|_{\id_{\unitD}}\big(d_{\id}\pi x , d_{\id} \pi y \big)
& = \frac{i}{4\pi} \int_{S^1} \Big( \alpha (x - iJx) \, d(y+ iJy) + \beta (x'-iJx') \, d(y'+iJy') \Big) \\
&= \frac{i}{4\pi} \int_{S^1} \Big( \alpha (x \, dy - iJx \, dy+ ix \, dJy + Jx \, dJy) 
 \\ &\qquad \qquad \qquad + \beta (x' \, dy' - iJx' \, dy'+ i x' \, dJy' + Jx' \, dJy') \Big) \\
 &= \frac{i}{4\pi} \int_{S^1} \Big( \alpha (x \, dy  + Jx \, dJy) + \beta (x' \, dy' + Jx' \, dJy') \Big) \\
 & \quad +  \frac{1}{4\pi} \int_{S^1} \Big( \alpha (Jx \, dy- x \, dJy )  + \beta (Jx' \, dy'- x' \, dJy' ) \Big) \\
 & = i \omega_{\alpha\beta}(x, y) + \omega_{\alpha\beta}(Jx, y),
\end{align*}
where $\omega_{\alpha\beta}$ is defined as in Section~\ref{sec:Vir-Bott} and in the last equation we used $\int_{S^1} x dy = \int_{S^1} Jx \, dJy$ which can be shown by  Fourier expansions.
The  inner product on $\Vect_0 S^1$ corresponding to the form $\omega_{\alpha\beta}$ is obtained by
$$(x, y)_{\alpha\beta} = \omega_{\alpha\beta}(Jx,y).$$
Observe that  
\begin{equation} \label{eq:updownJswitch} 
(x, y)_{\alpha\beta} = - \langle Jx', y \rangle^{\alpha\beta},\qquad x, y \in \Vect_0 S^1.
\end{equation}
Extend $(\ , \,)_{\alpha\beta}$ to an inner product on the whole algebra $\Vect S^1$ as in~\eqref{eq:IPextension}. Let $\mathbf g_{\alpha\beta}$ be a Riemannian metric obtained by the left translation of $(\ ,\, )_{\alpha\beta}$, and let $\mathbf h_{\alpha\beta}$ be the metric restricted to~$\calH$. We apply Theorem~\ref{TheoremLeft} and deduce that a normal critical curve $\gamma\colon I\to\Diff S^1$ is the solution to
$$\kappa^\ell(\dot \gamma) = u,\qquad L_{\alpha\beta} J\dot u' = u L_{\alpha\beta} Ju'' + 2u' L_{\alpha\beta} u' - 2 \lambda u',\ \ \lambda\in\mathbb R.$$
Here we used the property~\eqref{eq:updownJswitch} and the equation $L_{\alpha\beta}\frac{d}{dt}Ju'(t)=\ad^{\top}_u(L_{\alpha\beta}Ju'-\lambda)$.
We conclude that the geodesics for $\mathbf b_{\alpha\beta}$ can be found by solving the above equation for $\lambda = 0$, and then, projecting it to~$\calF_0$.

For $(\alpha,\beta) = (1,0)$, this is a special case of the modified  Constantin-Lax-Majda (CLM) equation. For more information, see~\cite{EKW,BBHM}, where the Riemannian geometry for the metric $\mathbf g_{1,0}$  is considered. It can be considered as the Sobolev $H^{1/2}$ metric on $\Diff_+ S^1$.

\begin{remark}
Let $\widetilde {\mathbf g}_{1,0}$ be the {\it right}-invariant metric on $\Diff S^1$ corresponding to $( \cdot, \cdot)_{1,0}$. Let $d_{\mathbf g_{1,0}}$ and $d_{\widetilde {\mathbf g}_{1,0}}$ be the Riemannian distance functions related to these metrics. In \cite{BBHM}, it was shown that the geodesic distance related to $\widetilde {\mathbf g}_{1,0}$ vanishes by showing that
$$d_{\widetilde {\mathbf g}_{1,0}}(\id, \rho) = 0, \qquad \text{ for any } \rho \in K = \Rot(S^1).$$
Because of the isomorphism $\phi \mapsto \phi^{-1}$ between the right- and left- invariant structures, this means that $d_{{\mathbf g}_{1,0}}(\id, \rho) = 0$ also  for any $\rho \in K$. However, this is different  when working with the induced metrics on the quotient spaces.

Consider the projection
$$\pi: \Diff S^1 \to B = \Diff S^1/K,$$
which we can identify with $\calF_0$. Let $\mathbf b_{1,0}$ be as before and let $\widetilde {\mathbf b}_{1,0}$ be the metric induced by $\widetilde {\mathbf g}_{1,0}$. It is well-defined because $\widetilde {\mathbf g}_{1,0}$ is also left-invariant with respect to $K$. Denote by $d_{\mathbf b_{1,0}}$ and $d_{\widetilde {\mathbf b}_{1,0}}$  the corresponding distance functions. Then $d_{\widetilde {\mathbf b}_{1,0}}$  also vanishes. Indeed, for an element $\phi \in \Diff S^1$, which is not a rotation, write $\pi(\phi) = f \neq \id_{\unitD}$. Then, if $\rho(\theta) = \theta+ s$, it follows that
$$\pi(\rho \circ \phi) = \widetilde f, \qquad \widetilde f(z) = e^{is} f(z e^{-is}),$$
which is different from $f$ by our previous assumptions, but $d_{\widetilde {\mathbf b}_{1,0}}(f, \widetilde f) = 0$ since $d_{\widetilde {\mathbf g}_{1,0}}(\phi, \rho \circ \phi) = 0$. However, this argument cannot be used to show that $d_{\mathbf b_{1,0}}$ vanishes because $\pi(\phi \circ \rho) = \pi(\phi)$ always, and so this remains an open question.

We remark that if $d_{\mathbf b_{1,0}}$ does not vanish, then neither will the Carnot-Carat\'eodory distance $d_{C-C}$ with respect to $\mathbf h_{1,0}$ by the obvious inequality $d_{C-C}(\phi_1, \phi_2) \geq d_{\mathbf b_{1,0}}(\pi(\phi_1), \pi(\phi_2))$.
\end{remark}

\subsection{Subgroups of $\wDiff S^1$}\label{sectionextra}
We describe some special subgroups of $\wDiff S^1$, that will be used to prove the sub-Riemannian controllability for the groups $\Diff S^1$ and $\calG_{\alpha\beta}$ in the classes of $\calH$- and $\calE$-horizontal curves respectively.

We start from describing subalgebras of $\Vect S^1$. For each $n \in \integer$, let us define
$$p_n = \cos n \theta \, \partial_\theta, \qquad k_n = \sin n\theta \, \partial_\theta.$$
The  Lie brackets are given by
\begin{equation}\label{k}
\left[k_m , k_n \right] = \tfrac{m+n}{2} k_{m-n} + \tfrac{m-n}{2} k_{m+n},
\end{equation}
\begin{equation}\label{p}
\left[p_m , p_n\right] = - \tfrac{m+n}{2} k_{m-n} - \tfrac{m-n}{2} k_{m+n},
\end{equation}
\begin{equation}\label{kp}
\left[p_m, k_n \right] = -\tfrac{m+n}{2} p_{m-n} + \tfrac{m-n}{2} p_{n+m}.
\end{equation}

It is easy to see from~(\ref{k}--\ref{kp}) that $\Lieh_n = \spn \{p_0 , p_n, k_n \}$ are subalgebras of $\Vect S^1$, and that $\Lieh_n$ is isomorphic to $\su(1,1)$ for each $n$. We need to construct an explicit exponentiation from $\Lieh_n$ to the subgroup $H_n$ of $\Diff S^1$.  We start from describing the simply connected group and its universal cover, corresponding to $\su(1,1)$.

\subsubsection{The universal cover of  $\SU(1,1)$}
The Lie group $\SU(1,1)$ consists of $2\times 2$ complex matrices
$$\left(\begin{array}{cc} z_1 & z_2 \\ \bar{z}_2 & \bar{z}_1 \end{array}\right), \qquad |z_1|^2- |z_2|^2= 1.$$
Its linear Lie algebra $\su(1,1)$ has the basis given by
$$X = \frac{1}{2} \left(\begin{array}{cc} 0 & i \\ -i & 0 \end{array}\right), \quad
Y = \frac{1}{2} \left(\begin{array}{cc} 0 & -1 \\ -1 & 0 \end{array}\right), \quad
Z = \frac{1}{2} \left(\begin{array}{cc} -i & 0 \\ 0 & i \end{array}\right).$$

Denote by $\widetilde{\SU}(1,1)$ the universal covering group of $\SU(1,1)$. The universal cover  $\widetilde{\SU}(1,1)$ can be represented as $\real \times \comp$ endowed with the group operation
$$(s_1, w_1)\cdot (s_2, w_2) = (s_3, w_3),\qquad (s_j, w_j) \in \real \times \comp, \quad j=1,2,3,$$
where
\begin{align*}
s_3 \, &= s_1 + s_2 + \Arg\left(\sqrt{(|w_1|^2 +1)(|w_2|^2 +1)} + \bar{w}_1 w_2 e^{-i(s_1+s_2)} \right), \\
w_3 \, &= w_2 e^{-is_1} \sqrt{|w_1|^2 +1} + w_1 e^{is_2} \sqrt{|w_2|^2 +1}.
\end{align*}
In these coordinates, the covering  homomorphism from $\widetilde{\SU}(1,1)$ to $\SU(1,1)$ is given by
$$(s, w) \mapsto \left(\begin{array}{cc} e^{-is} \sqrt{|w|^2 +1} & w \\ \bar{w} & e^{is} \sqrt{|w|^2 +1} \end{array}\right).$$

Let us introduce necessary notations in order to describe the exponential map from $\su(1,1)$ to $\SU(1,1)$ and $\widetilde{\SU}(1,1)$.
For a vector $a = (a_1, a_2, a_3) \in \mathbb{R}^3$, we define its Lorentzian norm as
$\check{a} = a_1^2 + a_2^2 - a_3^2$.
We define the following  functions from $\real$ to $\real$
$$\displaystyle \scrC_a(t) = \left\{\begin{array}{ll} \cosh \left(\sqrt{\check{a}} t\right) & \text{if } \check{a} \geq 0, \\
\cos \left(\sqrt{-\check{a}} t\right) & \text{if } \check{a} < 0, \end{array} \right.
\qquad
\scrS_a(t) = \left\{\begin{array}{ll} \displaystyle \tfrac{\sinh \left(\sqrt{\check{a}} t\right)}{\sqrt{\check{a}}} & \text{if } \check{a} > 0, \\
t & \text{if } \check{a} = 0, \\ \displaystyle
\tfrac{\sin \left(\sqrt{-\check{a}} t\right)}{\sqrt{-\check{a}}} & \text{if } \check{a} < 0. \end{array} \right.$$
Continue and define $\scrT_a(t)$ for $\check{a} \geq 0$,
$$\scrT_a(t) = \left\{ \begin{array}{ll}  \tan^{-1}\left(\displaystyle\tfrac{a_3}{\sqrt{\check{a}}} \tanh\left(\sqrt{\check{a}} t\right)\right) & \text{if } \check{a} >0, \\ \displaystyle
\tan^{-1} a_3 t & \text{if } \check{a} = 0,\end{array}\right.$$
while the formula for $\check{a}< 0$ is given by
$$\scrT_a(t) = \left\{ \begin{array}{ll} \tan^{-1}\left(\displaystyle\tfrac{a_3}{\sqrt{-\check{a}}} \tan\left(\sqrt{-\check{a}} t
\right)\right)  + \pi n_a(t)& \text{if } t\sqrt{-\check{a}} \neq \tfrac{\pi}2 \text{ mod } \pi, \\
\sgn(a_3) t \sqrt{- \check{a}} & \text{if } t\sqrt{-\check{a}} = \tfrac{\pi}2 \text{ mod } \pi, \end{array}\right.$$
where
$$n_a(t) = \sgn(a_3) \left\lceil \tfrac{t \sqrt{- \check{a}}}{\pi} - \tfrac{1}{2} \right\rceil ,$$
$$\sgn(t) = \left\{ \begin{array}{ll} 1 & \text{if } t > 0 \\ 0 & \text{if } t = 0 \\ -1 & \text{if } t < 0 \end{array} \right. ,
\qquad \lceil t \rceil = \min\{ j \in \integer \, : \, t \leq j \}.$$
Then the exponential map to $\widetilde{\SU}(1,1)$ is given as
\begin{equation}\label{exptildeSU11}
\exp_{\wSU(1,1)}\Big(t(a_1X+a_2Y +a_3 Z)\Big) = \Big(\scrT_a(\tfrac{t}{2}), i(a_1 + i a_2) \scrS_a(\tfrac{t}{2}) \Big),
\end{equation}
and the exponential map to $\SU(1,1)$ is written as
\begin{equation} \label{expSU11}
\exp_{SU(1,1)}\Big(t(a_1X + a_2Y + a_3 Z)\Big) = \left(\begin{array}{cc} \scrC_a(\tfrac{t}{2}) - ia_3 \scrS_a(\tfrac{t}{2})
& i(a_1 + i a_2) \scrS_a(\tfrac{t}{2}) \\ -i(a_1 - i a_2) \scrS_a(\tfrac{t}{2}) &
\scrC_a(\tfrac{t}{2}) + ia_3 \scrS_a(\tfrac{t}{2}) \end{array}\right).
\end{equation}
See details in~\cite{GV}.

\subsubsection{Embedding of $\wSU(1,1)$ into $\wDiff S^1$} \label{secHn}
We want to find an explicit expression for the subgroups corresponding to the sub-algebras $\Lieh_n$.
\begin{proposition}
The subgroup $\widetilde H_n$ of $\wDiff S^1$, is the group of diffeomorphisms of the form
$$\phi(\theta) = \theta + \tfrac{2}{n} s + \tfrac{2}{n} \Arg\left(\sqrt{|w|^2 + 1} -i \bar{w}
e^{-i(n\theta +s)}\right), \qquad s \in \real,\ \  w \in \comp.$$
\end{proposition}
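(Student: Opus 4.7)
The plan is to exhibit $\widetilde H_n$ as the image of a Lie group homomorphism $\rho_n\colon \wSU(1,1)\to \wDiff S^1$ defined by the very formula of the proposition. Since $\wSU(1,1)$ is simply connected with Lie algebra $\su(1,1)\cong \Lieh_n$, and $\widetilde H_n$ is by definition the connected Lie subgroup of $\wDiff S^1$ with Lie algebra $\Lieh_n$, it suffices to verify three points: (i) $\rho_n(s,w)$ genuinely lies in $\wDiff S^1$; (ii) $\rho_n$ is a group homomorphism; (iii) $d_{(0,0)}\rho_n$ is a linear isomorphism onto $\Lieh_n$.

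For (i), set $f_\theta := \sqrt{|w|^2+1} - i\bar w\, e^{-i(n\theta+s)}$. As $\theta$ varies, $f_\theta$ traces the circle of radius $|w|$ centered at the real point $\sqrt{|w|^2+1}$, which lies in the open right half-plane because $\sqrt{|w|^2+1}>|w|$. Hence $f_\theta$ avoids the origin, a smooth branch of $\Arg f_\theta$ exists along the curve, and the loop $\theta\in[0,2\pi]$ has winding number $0$ around $0$; this yields $\rho_n(s,w)(\theta+2\pi)=\rho_n(s,w)(\theta)+2\pi$. A direct differentiation gives $\rho_n(s,w)'(\theta)=|f_\theta|^{-2}>0$, so $\rho_n(s,w)\in \wDiff S^1$.

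For (ii), the rescaling $T\colon\wDiff S^1\to\wDiff S^1$, $T\phi(\theta):=\tfrac{1}{n}\phi(n\theta)$, is an injective group homomorphism, since $T(\phi\circ\psi)(\theta)=\tfrac{1}{n}\phi(\psi(n\theta))=T\phi(T\psi(\theta))$; and $\rho_n=T\circ\rho_1$ by comparison of formulas, so it is enough to treat the case $n=1$. Using $e^{i\cdot 2\Arg z}=z/\bar z$, a direct calculation shows that $e^{i\rho_1(s,w)(\theta)}$ is the Möbius image of $e^{i\theta}$ under the matrix assigned by the covering homomorphism $\wSU(1,1)\to\SU(1,1)$ (displayed before the proposition) to the point $\sigma(s,w):=(-s,-i\bar w)$; equivalently, $\rho_1=\tilde\mu\circ\sigma$, where $\tilde\mu\colon\wSU(1,1)\to\wDiff S^1$ is the unique continuous lift with $\tilde\mu(0,0)=\id$ of the Möbius action $\SU(1,1)\to\Diff S^1$. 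The Möbius action is a left $\SU(1,1)$-action, hence a group homomorphism, and so is its lift $\tilde\mu$; moreover, inserting $\sigma(s_1,w_1)\cdot \sigma(s_2,w_2)$ into the multiplication law stated in the text and comparing with $\sigma((s_1,w_1)\cdot(s_2,w_2))$ shows that $\sigma$ is an involutive Lie group automorphism of $\wSU(1,1)$. Therefore $\rho_1$, and hence $\rho_n$, is a group homomorphism.

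For (iii), expanding $(s,w)=(ts_0,tw_0)$ to first order in $t$ and using $\Arg(1-it\bar w_0\,e^{-in\theta})=-t\,\re(\bar w_0\,e^{-in\theta})+O(t^2)$, one obtains
$$
d_{(0,0)}\rho_n(s_0,w_0)\;=\;\tfrac{2s_0}{n}\,p_0\;-\;\tfrac{2\re w_0}{n}\,p_n\;+\;\tfrac{2\im w_0}{n}\,k_n,
$$
which is a linear isomorphism onto $\Lieh_n=\spn\{p_0,p_n,k_n\}$. Since $\wSU(1,1)$ is connected, $\rho_n(\wSU(1,1))$ is precisely the connected Lie subgroup of $\wDiff S^1$ with Lie algebra $\Lieh_n$, that is, $\widetilde H_n$. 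I expect the main obstacle to be step (ii): identifying $\rho_1$ with a reparameterized lift of the Möbius action requires careful bookkeeping of signs and of the branch of $\Arg$, and verifying that $\sigma$ is an automorphism involves some work with the rather cumbersome multiplication law. A less conceptual but purely computational alternative is to check the homomorphism property of $\rho_1$ directly against the $\wSU(1,1)$-multiplication law.
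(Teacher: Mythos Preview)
Your proof is correct and takes a genuinely different route from the paper's. The paper verifies the homomorphism property of $f_n$ by a direct, lengthy computation of $f_n(s_1,w_1)\circ f_n(s_2,w_2)$ against the $\wSU(1,1)$-multiplication law, then checks injectivity, and finally differentiates three one-parameter subgroups to see that the image of $d_{\id}f_n$ is $\Lieh_n$. You instead (a) reduce to $n=1$ via the rescaling homomorphism $T\phi(\theta)=\tfrac{1}{n}\phi(n\theta)$, and (b) recognise $\rho_1$ as the lift $\tilde\mu$ of the M\"obius action of $\SU(1,1)$ on $S^1$ precomposed with the involutive automorphism $\sigma(s,w)=(-s,-i\bar w)$ of $\wSU(1,1)$. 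Both $\tilde\mu$ and $\sigma$ are homomorphisms for structural reasons, so the homomorphism property of $\rho_n$ follows without the long composition computation. Your approach is more conceptual and explains \emph{why} the formula works---it is the lifted M\"obius action in disguise---whereas the paper's direct calculation is self-contained but opaque. You also supply the verification that $\rho_n(s,w)\in\wDiff S^1$ (positivity of the derivative and correct periodicity), which the paper omits; conversely, the paper proves injectivity of $f_n$, which you do not need for the proposition as stated but which is used later in the paper. The residual computation in your argument---checking that $\sigma$ respects the $\wSU(1,1)$ multiplication and that $e^{i\rho_1(s,w)(\theta)}$ matches the M\"obius image---is shorter and more symmetric than the paper's head-on verification, so on balance your route is cleaner.
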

\begin{proof}
For any positive integer $n$, define the mapping
$$f_n: \wSU(1,1) \to \wDiff S^1,$$
by
$$f_n(s,w)(\theta) = \theta + \tfrac{2}{n} s + \tfrac{2}{n} \Arg\left(\sqrt{|w|^2 + 1} -i \bar{w}
e^{-i(n\theta +s)}\right).$$
We want to show that $f_n$ is an injective group homomorphism.

The homomorphism property follows from computation of $f_n(s_1, w_1) \circ f_n(s_2, w_2)$. Let $(s_1,w_1)\cdot (s_2,w_2) = (s_3, w_3)$. For a fixed value $\theta$, define
$$\vartheta := f_n(s_2, w_2)(\theta).$$

Then
\begin{align*}
& f_n(s_1, w_1) \circ f_n(s_2, w_2)(\theta) \\
= & \,\, \vartheta  + \tfrac{2}{n}s_1   + \tfrac{2}{n}\Arg\left(\sqrt{|w_1|^2 +1} - i \bar{w}_1e^{-i(n \vartheta + s_1)} \right) \\
= & \,\, \theta  + \tfrac{2}{n}(s_1 + s_2) + \tfrac{2}{n}\Arg\left(\sqrt{|w_2|^2 +1} - i w_2 e^{-i(n\theta + s_2)} \right)  \\
& + \tfrac{2}{n}\Arg\left(\sqrt{|w_1|^2 +1} - i \bar{w}_1e^{-i(s_1+ n\theta + 2 s_2)}
\frac{\sqrt{|w_2|^2 +1} + i w_2 e^{i(n\theta + s_2)}} {\sqrt{|w_2|^2 +1} - i \bar{w}_2 e^{-i(n\theta + s_2)}}\right)
\end{align*}
\begin{align*}
= & \,\, \theta  + \tfrac{2}{n}(s_1 + s_2) + \tfrac{2}{n}\Arg\Big(\sqrt{(|w_1|^2 +1)(|w_2|^2 +1)}
\\ & - i\bar{w}_2 e^{-i(n\theta + s_2)} \sqrt{|w_1|^2 +1}
- i \bar{w}_1 e^{-i(n\theta - 2s_2 - s_1)} \sqrt{|w_2|^2 +1} + \bar{w}_1 w_2 e^{-i(s_1+s_2)}\Big) \\
= & \,\, \theta  + \tfrac{2}{n}(s_1 + s_2) + \tfrac{2}{n}\Arg\left(e^{i(s_3-s_1 - s_2)} \sqrt{|w_3|^2 +1}
- i\bar{w}_3 e^{-i(n\theta + s_1 +s_2)}\right)
\end{align*}
\begin{align*}
= & \,\, \theta  + \tfrac{2}{n}(s_1 + s_2) + \tfrac{2}{n} \Arg(e^{i(s_3-s_1 - s_2)})
+ \tfrac{2}{n}\Arg\left(\sqrt{|w_3|^2 +1} - i\bar{w}_3 e^{-i(n\theta + s_3)}\right) \\
= & \,\, \theta  + \tfrac{2}{n}s_3 + \tfrac{2}{n}\Arg\left(\sqrt{|w_3|^2 +1} - i\bar{w}_3 e^{-i(n\theta + s_3)}\right) \\
= & f_n(s_3, w_3).
\end{align*}
To show injectivity of $f_n$, assume that $f(s,w) = \id$. This implies that for any $\theta$,
\begin{equation} \label{eqinjective}\Arg\left(\sqrt{|w|^2 + 1} -i \bar{w}e^{-i(n\theta +s)}\right) = - s.\end{equation}
However, the left side of \eqref{eqinjective} is  constant only if $w =0$, which, in its turn, implies $s=0$. Hence, the kernel of $f_n$ is trivial.

To complete the proof, we need to show that $\Lieh_n$ is indeed the image of $d_{id} f_{n}$.
But this follows from the computations
\begin{equation} \label{eqpartialthree} \partial_t |_{t= 0} f(t,0)(\theta) = \tfrac{2}{n}, \quad \partial_t |_{t= 0} f(0,t)(\theta) = \tfrac{2}{n} \cos n\theta,
\quad \partial_t |_{t= 0} f(0,it)(\theta) = -\tfrac{2}{n} \sin n\theta.\end{equation}
\end{proof}

\begin{remark} Let us give a couple of observations.
\begin{itemize}
\item[1.]{
Since the derivative of the curves $t \mapsto (t,0), t \mapsto (0,t)$ and $t \mapsto (0,it)$ at the identity in $\wSU(1,1)$ can be identified with $2Z, -2Y$ and $2X$, respectively, relations~\eqref{eqpartialthree} yield that
$$\exp_{\wDiff S^1}\Big(t(a_1k_n + a_2 p_n + a_3 p_0)\Big) = f_n\left(\exp_{\wSU(1,1)} \Big(nt(-a_1 X - a_2 Y + a_3 Z) \Big) \right).$$}
\item[2.]{To obtain the corresponding subgroups in $\Diff S^1$ one needs to add a (mod $2\pi$) at the end.}
\end{itemize}
\end{remark}

\subsection{Controllability}

We will finish this section by addressing the question of controllability. Notice first that although the brackets of $\Vect_0 S^1$ generate the whole algebra $\Vect S^1$, it is not a $C^\infty(S^1)$-sub-module so we cannot apply Theorem~\ref{theorem:AC}. Instead we must use the invariance under the group action of the horizontal sub-bundles to show that we indeed can connect every pair of points.
\begin{theorem} The following is true.
\begin{itemize}
\item[(a)] Let $\calH$ be a choice of horizontal sub-bundle on $\Diff S^1$ or $\wDiff S^1$ defined as in Section~\ref{sec:HCRC}. Then any pair of points can be connected by an $\calH$-horizontal curve.
\item[(b)] Let $\calE$ be a choice of horizontal sub-bundle on $\calG_{\mu\nu}$ defined as in Section~\ref{sec:HCRC}. Then any two points on $\calG_{\mu\nu}$ can be connected by an $\calE$-horizontal curve.
\end{itemize}
\end{theorem}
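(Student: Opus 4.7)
\emph{Proof plan.} My plan is to apply Proposition~\ref{groupcontrollability} to reduce both statements to reaching the subgroups $\widetilde K$ (resp.\ $\widehat K$) from the identity by a horizontal curve, and then to construct such curves inside the finite-dimensional subgroups $\widetilde H_n \cong \wSU(1,1)$ from Section~\ref{sectionextra}. Proposition~\ref{invAdH} supplies the required $\Ad(K)$-invariance of the horizontal distribution in both cases, and the splittings $\Lieg = \Vect_0 S^1 \oplus \Liek$ and $\Lieg_{\mu\nu} = (\Vect_0 S^1,0) \oplus \widehat\Liek$ fit the hypotheses of Proposition~\ref{groupcontrollability} with $\Liep = \Lieh$.

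For part (a), it then suffices to connect $\id$ to every translation by a smooth $\calH$-horizontal curve. The subgroup $\widetilde H_1 \subset \wDiff S^1$ contains all translations $f_1(s,0)\colon \theta \mapsto \theta + 2s$, and $\calH \cap T\widetilde H_1$ is the left-invariant rank-two distribution on the 3-dimensional group $\widetilde H_1$ generated at the identity by $\{p_1, k_1\}$. Since $[p_1, k_1] = -p_0$ exhausts the third direction of $\Lieh_1$, this distribution is bracket generating, so the classical finite-dimensional Rashevski\u\i-Chow theorem applied inside $\widetilde H_1$---with a standard smoothing at the glueing points of the piecewise integral curves---delivers the required smooth horizontal curves.

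For part (b) I work under the standing Virasoro-Bott assumption $\nu \neq 0$ and use the two subgroups $\widehat H_n \subset \calG_{\mu\nu}$, $n = 1,2$, each defined as the preimage of $\widetilde H_n$ under the projection $\calG_{\mu\nu} \to \wDiff S^1$. A direct bracket check in $\Lieg_{\mu\nu}$ shows that $\spn\{(p_n,0),(k_n,0),(-np_0,c_n)\}$ with $c_n = \tfrac{n\mu + n^3 \nu}{2}$ is closed under $[\cdot,\cdot]$; it exponentiates to a 3-dimensional Lie subgroup of $\widehat H_n$ on which $\spn\{(p_n,0),(k_n,0)\}$ is bracket generating. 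Finite-dimensional Rashevski\u\i-Chow then reaches the one-parameter family $\exp\bigl(t(-np_0, c_n)\bigr) = \bigl(\theta \mapsto \theta - nt,\, c_n t\bigr) \in \widehat K$ from $\mathbf 1$ by a smooth horizontal curve inside this subgroup. A direct computation shows that $A$ and $B$ vanish on any pair of translations, so $\widehat K$ is abelian; concatenating a horizontal curve in $\widehat H_1$ from $\mathbf 1$ to $(\theta \mapsto \theta - t_1, c_1 t_1)$ with the left-translate of a horizontal curve in $\widehat H_2$ from $\mathbf 1$ to $(\theta \mapsto \theta - 2 t_2, c_2 t_2)$ reaches the arbitrary target $(\theta \mapsto \theta + s_0, b_0) \in \widehat K$ whenever the $2 \times 2$ linear system
\[
\begin{pmatrix} -1 & -2 \\ c_1 & c_2 \end{pmatrix}\!\begin{pmatrix} t_1 \\ t_2 \end{pmatrix} = \begin{pmatrix} s_0 \\ b_0 \end{pmatrix}
\]
is solvable, and its determinant is $-3\nu$, nonzero by assumption.

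The main obstacle in (b) is that the Lie algebra generated in $\Lieg_{\mu\nu}$ by all four elements $\{(p_n, 0), (k_n, 0) : n = 1, 2\}$ is itself infinite-dimensional (for instance $[(p_1, 0), (p_2, 0)]$ excites the mode $k_3$), so one cannot apply finite-dimensional Rashevski\u\i-Chow to the four generators simultaneously. The route above sidesteps this by working inside one 3-dimensional subgroup at a time and combining the partial reaches using left-invariance of $\calE$ together with the abelian structure of $\widehat K$; smoothness at the concatenation joint is arranged by the usual reparametrization forcing all derivatives to vanish there.
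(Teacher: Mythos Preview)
Your proof is correct and follows essentially the same route as the paper: both parts reduce via Proposition~\ref{groupcontrollability} (using the $\Ad(K)$-invariance from Proposition~\ref{invAdH}) to reaching the subgroup $\widetilde K$ (resp.\ $\widehat K$) from the identity, and then settle this inside the finite-dimensional subgroups $\widetilde H_1$ (resp.\ three-dimensional subgroups of $\widehat H_1$ and $\widehat H_2$) by the classical Rashevski\u\i--Chow theorem, concatenating by left translation to span all of $\widehat K$ when $\nu\neq 0$. Your cocycle value $c_n=\tfrac{n\mu+n^3\nu}{2}$ for the central component of $[(p_n,0),(k_n,0)]$ is the correct one; the paper records the third generator of its subalgebra $\widehat{\mathfrak t}_n$ as $(p_0,\,n^2\nu-\mu)$, but this discrepancy is immaterial since in either case the two resulting lines in $\widehat K\cong\real^2$ are independent exactly when $\nu\neq 0$.
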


\begin{proof}
To prove (a), it is sufficient to show that any two points in $\wDiff S^1$ can be connected by an $\calH$-horizontal curve. Applying  Proposition~\ref{groupcontrollability}, we only need to verify that the unit of the group $\id$ can be connected with any element in $\widetilde K$ by an $\calH$-horizontal curve. 
The subgroup $\widetilde K$ is contained in $\widetilde H_n$ for any $n$, where $\widetilde H_n$ are as described in Section~\ref{sectionextra}. In particular, $\widetilde K$ can be considered as a subgroup of~$H_1$. Any $\calH$-horizontal curve in $H_1$, has the left logarithmic derivative in $\Lieh_1 \cap \Vect_0 S^1 = \spn\{k_1, p_1\}$. Since $[p_1,k_1] = p_0$, the horizontal distribution $\calH$ restricted to $H_1$ is bracket generating. The group $H_1$ is finite-dimensional, therefore, we can apply the Rashevski{\u\i}-Chow theorem to conclude that every point in $H_1$, including points in $\widetilde K$, can be reached by an $\calH$-horizontal curve.

To prove (b), we need to show that any point in $\widehat K=\{\theta\mapsto (\theta+b_0,b)\in\calG_{\mu\nu}\}$ can be connected to $(\id,0)$ by an $\calE$-horizontal curve. A bit more care needs to be taken in this case. Consider subgroups
$$\widehat H_n = \left\{(\phi,a) \in \calG_{\mu\nu} \, : \, \phi \in H_n,\ a\in\mathbb R \right\}.$$
which has the Lie algebras
$$\widehat{\Lieh}_n = \spn \left\{(p_0, 0), (p_n, 0), (k_n, 0), (0,1)\right\},$$
The Lie algebras $\widehat{\Lieh}_n$ have special sub-algebras
$$\widehat{\mathfrak t}_n = \spn \left\{(p_0, n^2 \nu - \mu), (p_n, 0), (k_n, 0)\right\}.$$
Denote the corresponding subgroups by $\widehat T_n$. In the contrast to $\widehat H_n$, the distribution $\calE$ restricted any subgroup $\widehat T_n$ is bracket generating, and so all elements in such a subgroup $\widehat T_n$ can be reached by an $\calE$-horizontal curve. It is clear that
$$\widehat T_n \cap \widehat K = \left\{\big(\theta \mapsto \theta + r, r(n^2 \nu - \mu)\big) \, : \, r \in \real\right\}.$$
Since $\widehat K$ is isomorphic to $\real^2$ as a group and $\nu \neq 0$, we know that for any $g \in \widehat K$, there are unique elements $g_j \in \widehat T_j \cap \widehat K, j = 1, 2$ so that $g= g_1 \cdot g_2 = g_2 \cdot g_1$. Denote by $c_1$ and $c_2$ curves that connect $\id$ with $g_1$ and $g_2$ respectively. We can reach $g$ first by  following the curve $c_2$ and and then continuing by $\ell_{g_2} c_1$. This finishes the proof.
\end{proof}


\end{document}